\renewcommand{\@seccntformat}[1]{\bf\csname the#1\endcsname.}
\renewcommand{\section}{\@startsection{section}{1}
	\z@{.7\linespacing\@plus\linespacing}{.5\linespacing}
	{\normalfont\upshape\bfseries\centering}}
\renewcommand{\@biblabel}[1]{\@ifnotempty{#1}{#1.}}
\theoremstyle{plain}
\newtheorem{thm}{Theorem}[section]
\newtheorem{lem}[thm]{Lemma}
\newtheorem{prop}[thm]{Proposition}
\newtheorem{cor}[thm]{Corollary}
\theoremstyle{definition}
\newtheorem{defn}[thm]{Definition}
\newtheorem{rem}{Remark}[section]
\def\A{{\mathcal A}}
\def\T{{\mathcal T}}
\def\>{\succ}
\def\<{\prec}
\def\b{\beta}
\def\a{\alpha}
\def\l{\lambda}
\def\p{\partial}
\def\m{\mu}
\begin{document}	
	\title[Sania Asif \textsuperscript{1}, Yao Wang  \textsuperscript{2}]
	{Rota-Baxter operators and Loday-type algebras on the BiHom-associative conformal algebras}
	\author{ Sania Asif \textsuperscript{1}, Yao Wang \textsuperscript{2,*}}
	\address{\textsuperscript{1}School of Mathematics and Statistics, Nanjing University of information science and technology, Nanjing, Jianngsu Province, PR China.}
	\address{\textsuperscript{2}School of Mathematics and Statistics, Nanjing University of information science and technology, Nanjing, Jianngsu Province, PR China.}
	
	
	\email{\textsuperscript{1}11835037@zju.edu.cn, 200036@nuist.edu.cn}
	\email{\textsuperscript{2}wangyao@nuist.edu.cn}
	
	\keywords{(tri-)dendriform algebra, BiHom-associative algebra, Associative-conformal algebras, BiHom-NS-algebra, Quadri-conformal algebra}
	\subjclass[2010]{16R60, 17B05, 17B40, 17B37}
	
	\date{\today}
\thanks{This work is supported by the Jiangsu Natural Science Foundation Project (Natural Science Foundation of Jiangsu Province), Relative Gorenstein cotorsion Homology Theory and Its Applications (No.BK20181406). }
	\begin{abstract}(Tri)dendriform algebras, Rota-Baxter operators, and closely related NS-algebras have a number of dominant applications in physics, especially in quantum field theory. Proceeding from the recent study relating these structures, this paper considers (tri)dendriform algebras, NS-algebras, and (twisted)Rota-Baxter operators in the context of BiHom-associative conformal algebras. A comprehensive investigation of the BiHom-(tri)dendriform conformal algebras and their characterization in terms of conformal bimodule has been conducted. The study of BiHom-NS-conformal algebra reveals that it is not only a generalization of NS-conformal algebra using two structural maps but is also the generalization of BiHom-(tri)dendriform conformal algebras. Additionally, it is found to have a close proximity between BiHom-twisted Rota-Baxter operators and BiHom-NS-conformal algebras. The comparative study to Rota-Baxter operators on BiHom-associative conformal algebras and Rota-Baxter operators on BiHom-(tri)dendriform conformal algebras reveals a relationship between BiHom-quadri conformal algebra and Rota-Baxter operators. In the end, the concept of Rota-Baxter system (a generalization of the Rota-Baxter operator) for BiHom-associative conformal algebras and BiHom-dendriform conformal algebras is narrated, where the interconnections of these algebras are depicted. Furthermore, a connection is established between BiHom-quadri conformal algebras and Rota-Baxter systems for BiHom-dendriform conformal algebras. 
\end{abstract}
\footnote{1 Sania Asif (Email: 11835037@zju.edu.cn ), 2 \textbf{Corresponding Author}: Yao Wang (Email: wangyao@nuist.edu.cn )
}
\maketitle
\section{Introduction}\label{introduction}
Rota-Baxter operator on an associative algebra $A$ over a field $\mathbb{F}$ is a linear map $R:A\to A$ that satisfies the following identity:\begin{equation*}
R(p)R(q)  = R(R(p)q + pR(q)+\theta(pq)), \quad \forall p, q\in \A, \theta \in \mathbb{F}.
\end{equation*}The concept of a Rota-Baxter operator was first introduced by G. Baxter in \cite{B1}. Later, this idea was thoroughly investigated in a variety of mathematical domains (for more information, see \cite{A2, 4, 7}), where its relationship with combinatorics and many other fields of mathematical physics were primarily introduced. Rota-Baxter operators are not only important in the study of the integrable systems as shown in \cite{10}, but in addition, when defined on associative algebra, verify the traditional Yang-Baxter equation (\cite{12}). The relationship between Rota-Baxter operators and classical Yang-Baxter equations, (tri)dendriform algebras, free Rota-Baxter algebras, pre-Lie algebras, and quantum field theory, can be seen in (\cite{13, 17, 16, 18, 39}). There is another well known structure of dendriform algebra that was introduced by Loday in the study of $\mathcal{K}$-theory in \cite{20}. Aguiar showed that the structure of a dendriform algebra can also be obtained from Rota-Baxter operator of wight zero. Moreover, it occur as Koszul-dual to diassociative algebras. Dendriform algebra can be linked to Rota-Baxter algebra in the form of adjoint functors; see \cite{11, 14}. Later on, It is discovered that the fields of homology, operads, Gerstenhaber algebra, Hopf algebras, homotopy theory, quantum field theory, and combinatorics all have a strong relationship to the dendriform algebra. Dendriform algebra further splits into quadri algebra, which is essentially an associative algebra for which the multiplication can be decomposed as the sum of four operations in a certain coherent manner in \cite{1}. According to Leroux \cite{L} and Uchino \cite{U1}, NS-algebra (with reference to associative algebra) is an algebraic structure with three operations $\>$, $\<$, and $\vee$ that satisfy certain axioms, indicating that the new operation $*=  \>+ \<+ \vee$ is associative. In addition to tridendriform algebra, NS-algebras also generalize dendriform algebras. A generalization of Rota-Baxter operators of wieght zero that involves Hochschild $2$-cocycle is usually refer to twisted Rota-Baxter operator have a significant relation with the dendriform algebra, studied in \cite{U2}. 
\par In \cite{25}, Makhlouf proposed the concept of Hom-associative algebra, where twisted associative identity was used with the  help of a linear map $\a$. Since then, several algebraic concepts, including Hom-Hopf algebras, Hom-coalgebras, Hom-bialgebras, generalized Hom-Lie algebras, and Hom-Poisson algebras, Hom-Lie superalgebras and Hom-Hopf modules have been introduced in terms of twisted maps (for more information, see \cite{30, 27}) that have a significant contribution to understanding the structural theory of various mathematical objects. Recently, a more general structure was introduced in \cite{GMMP}, that  is called a "BiHom-algebra"(associative or non-associative) involving two structural linear maps, $\a$ and $\b$. For more details about BiHom-type algebras, readers are referred to \cite{LMMP2, LMMP3, LMMP4} and references therein. In particular, the BiHom-analogue of Rota-Baxter operators and dendriform algebras was introduced in \cite{LMMP} and the result that the dendriform algebra can be obtained from Rota-Baxter operator of weight zero, also valid for BiHom-case.
\par Conformal algebras play a significant part in the structure theory development of (associative and Lie) conformal algebras. This algebra gives a complete insight of the operator product expansion (OPE) of chiral fields in conformal field theory (CFT). The study of vertex algebras in \cite{32} introduced the concept of conformal algebras. Lie and Associative conformal algebras are interlinked because they naturally appears in the representation theory. In a series of papers (see \cite{2, 34, 35, 40, 38}), structure theory and representation theory of associative conformal algebras are studied. Conformal algebras, in particular, have strong ties to infinite-dimensional algebras that satisfy the locality property. Later on, associative conformal algebras in Hom-setting were studied in \cite{45, 44, 41}, where derivations, representations theory, and cohomology theory of Hom-type conformal algebras were discussed in detail. Furthermore, associative conformal algebras in BiHom-setting were studied in \cite{SXS, TC}, and references therein.
\par In the present paper, we study the BiHom-(tri)dendriform conformal algebra and provide its characterization in terms of conformal bimodules. We then studied BiHom-NS-conformal algebra and showed that it is not only a generalization of NS-conformal algebra using two structural maps but is also the generalization of BiHom-(tri)dendriform conformal algebra. Furthermore, in the context of bihom-associative conformal algebras, we discuss its relationship with twisted Rota-Baxter operators. We then looked how Rota-Baxter operators on BiHom-associative conformal algebras relate to BiHom-(tri)dendriform conformal algebras. We look into the Rota-Baxter operator on BiHom-dendriform conformal algebra as well as BiHom-associative conformal algebra, which leads to BiHom-quadri conformal algebra. We further study some of its important properties. In the end, we investigate  the further generalization of the Rota-Baxter operator that is called Rota-Baxter system for BiHom-associative conformal algebra and BiHom-dendriform conformal algebra. Moreover, we study the relationship of Rota-Baxter systems for BiHom-dendriform conformal algebras with the BiHom-quadri conformal algebras. This research is crucial since it investigates algebra and a variety of operators while also demonstrating how they are related. Additionally, as this study takes BiHom- associative conformal algebras into account, we can obtain equivalent conclusions for (Hom-)associative -conformal algebras. These findings also serve as a foundation for understanding conformal algebra structure theory and subsequent developments like cohomology and deformation theory.    
 \par We organize this paper as follows:
 In Section $2$, we studied dendriform and tridendriform conformal algebra on BiHom-associative conformal algebras and discussed their behavior in terms of conformal bimodules, which further led us to discuss BiHom-NS conformal algebra. In Section $3$, we focused on describing Rota-Baxter operators on BiHom-associative conformal algebras and its relation with above mentioned algebras. Afterward, we define Rota-Baxter operators on BiHom-dendriform conformal algebras that is helpful in obtaining BiHom-quadri conformal algebra structure (discussed in section $4$ in more details). In addition, we discussed twisted-Rota-Baxter operators and their relation to BiHom-NS conformal algebra.
 We defined BiHom-quadri conformal algebra, which is essentially a quadri-conformal algebra with two structural maps, and evaluated vertical and horizontal BiHom-dendriform conformal algebras from it in Section $4$. We then show that new multiplications for a given BiHom-quadri-conformal algebra yield BiHom-associative conformal algebra. In addition, we show that a quadri-conformal algebra can be obtained from the tensor product of two BiHom-dendriform conformal algebras, similar concept for BiHom-type algebra is given in  \cite{QLJ}. In the last Section $5$, we define  the "Rota-Baxter system" on BiHom-associative conformal algebra and BiHom-dendriform conformal algebras, and investigate its relation with BiHom-quadri-conformal algebras.\par All linear maps, vector spaces, and tensor products in this paper are over the field of complex numbers $\mathbb{C}$. Where $A$ stands for BiHom-associative conformal algebra with elements $p, q$ and $r.$
\section{ BiHom-(tri)dendriform conformal algebra and BiHom-NS conformal algebra}In this section, we study dendriform and tridendriform conformal algebras corresponding to BiHom-associative conformal algebras and discuss their behavior in terms of conformal bimodules, which further leads to discussing BiHom-NS conformal algebra. First of all, we recall some important definitions to support our findings.
\begin{defn}A $\mathbb{C}[\p]$-module $A$ is called BiHom-associative conformal algebra with $\mathbb{C}$-linear commuting maps $\a, \b : A \to A,$ such that $\a\p= \p\a $, $\b\p= \p\b$, if it is equipped with a $\l$- multiplication $\tau_\l$ which defines a $\mathbb{C}$-bilinear map $\tau_\l: A \otimes A\to A[\l]= \mathbb{C}[\l]\otimes A$, with notation $\tau_\l(p\otimes q)= p_\l q$, for all $p, q\in A$, satisfying the following axioms:
	\begin{eqnarray}
	(\p p)_\l q = -\l (p_\l q),& p_\l(\p q) = (\p+\l)(p_\l q),&\textit{(Conformal sesqui-linear identities)}\\
	\a(p_\l q) = \a(p)_\l \a(q)\textit{ and } &\b(p_\l q)= \b(p)_\l\b(q), &\textit{(Conformal BiHom-multiplicative identities)}\\
	&\a(p)_\l(q_\m r) = (p_\l q)_{\l+\m}\b(r).&\textit{(Conformal BiHom- associative identity)} \label{eqA4}
	\end{eqnarray} for $\l,\m\in \mathbb{C}, p,q,r \in A.$ Where $\a$ and $\b$ are called the structure maps of $\A$.\end{defn}
A morphism of BiHom-associative conformal algebras $f: (A, \tau_A, \a_A, \b_A) \to (B, \tau_B, \a_B, \b_B)$ is a $\mathbb{C}$-linear map $f : A \to B$, that satisfies $\a_B  f = f \a_A$, $\b_B f= f \b_A$ and $f \tau_A = \tau_B (f \otimes f)$.

\begin{defn}\label{def2.2}
Assume that $(A, \tau_{\l}^A, \a_A, \b_A)$ be a BiHom-associative conformal algebra and $(M, \a_M, \b_M )$ is a triple where $M$ is $\mathbb{C}[\p]$-module equipped with a $\mathbb{C}$-linear commuting maps $\a_M, \b_M \in End(M)$ in such a way that $\a_M(p_\l m)={\a_{A}(p)}_\l\a_M(m)$ and $\b_M(p_\l m)={\b_{A}(p)}_\l\b_M(m)$.
	\begin{enumerate}
		\item If there is a conformal sesquilinear map $(m, p)\mapsto m_{\l}p$, that satisfies $(m_{\l}p)_{\l+\m}\b_{A}(q) = \a_{M}(m)_{\l} (p_{\m} q)$, for all $p,q\in \A$ and $m \in M$. Then $\mathbb{C}[\p]$-module $M$ is referred to as a right conformal module over $A$.
		\item If there is a conformal sesquilinear map $(p, m)\mapsto p_{\l}m$, that satisfies ${(p_{\l}q)}_{\l+\m} \b_{M}(m)= {\a_{A}(p)}_{\l} (q_{\m}m)$, for all $p,q \in A$ and $m\in M$. then $\mathbb{C}[\p]$-module $M$ is referred to as a left conformal module over $A$.
		\item If $M$ is both a right conformal module and a left conformal module and meets the compatibility condition ${(p_{\l}m)}_{\l+ \m}\b_A(q) = {\a_A(p)}_{\l}(m_{\m} q), $ for all $p, q \in A$ and $m \in M,$ then $M$ is referred to as a conformal bimodule over $A$ (or conformal $A$-bimodule).
\end{enumerate}
\end{defn}
\begin{defn}A $5$-tuple $(A, \<_\l, \>_\l, \a, \b)$ equipping a $\mathbb{C}[\p]$-module $A$, $\mathbb{C}$-bilinear maps $\<_\l, \>_\l: A \otimes A \to A[\l]$ and commuting $\mathbb{C}$-linear maps $\a, \b : A \to A$ is said to be a BiHom-dendriform conformal algebra , if the following conditions hold:
	\begin{eqnarray}(\p p)\>_\l q&= -\l (p \>_\l q), &p \>_\l (\p q)= (\l+ \p) (p \>_\l q),\\(\p p)\<_\l q&= -\l (p\<_\l q), &p\<_\l (\p q)= (\l+ \p) (p \<_\l q),\\
	\a(p \<_\l q) &= \a(p) \<_\l \a(q),& \a(p \>_\l q) = \a(p) \>_\l \a(q),\label{D1} \\
	\b(p \<_\l q)&= \b(p) \<_\l \b(q), & \b(p \>_\l q) = \b(p) \>_\l \b(q),\label{D2}  \\
	&(p \<_\l q) \<_{\l+\m} \b(r) &= \a(p) \<_\l (q \<_\m r + q \>_\m r),\label{D3} \\
	&(p \>_\l q) \<_{\l+\m} \b(r) &= \a(p) \>_{\l} (q \<_\m r), \label{D4} \\
	&\a(p) \>_\l (q \>_\m r) &= (p \<_\l q + p \>_\l q) \>_{\l+\m} \b(r),\label{D5} \end{eqnarray}
	for all $\l, \m\in \mathbb{C}$ and $p,q,r \in A$.\end{defn}
\begin{defn}A morphism of BiHom-dendriform conformal algebras $f : (A,\<_\l,\>_\l,\a,\b) \to (A',\<'_\l,\>'_\l,\a',\b')$ is a $\mathbb{C}$-linear map $f : A \to A'$, that satisfies $f(p \<_\l q) = f(p) \<'_\l f(q)$ and $f(p \>_{\l} q) = f(p) \>'_{\l} f(q),$ for all $p,q \in A$, as well as $f \a = \a' f $ and $f \b = \b' f$.\end{defn}
\begin{prop}\label{prop3.5}Let $(A, \<_\l, \>_\l)$ be a dendriform conformal algebra, $\a, \b\in cend(A)$, conformal bilinear maps $\<_\l^{(\a,\b)}, \>_\l^{(\a,\b)}: A \otimes A \to A[\l]$ defined by $p \<_\l^{(\a,\b)} q = \a(p) \<_\l \b(q)$ and $p \>_\l^{(\a,\b)} q = \a(p) \>_\l \b(q)$ for all $p, q\in A$. Then $A_{(\a,\b)} := (A, \<_{\l}^{(\a,\b)}, \>_{\l}^{(\a,\b)},\a,\b)$ is a BiHom-dendriform conformal algebra, called the Yau-twist of $A$. Furthermore, assume that $(A', \<'_{\l}, \>'_{\l},\a',\b')$ is another BiHom-dendriform conformal algebra and satisfying $f \circ \a = \a' \circ f$ and $f \circ \b = \b ' \circ f$. Then $f : A_{(\a,\b)}\to A'_{(\a',\b')}$ is a morphism of BiHom-dendriform conformal algebras, if $f:A\to A'$ is a BiHom tridendriform-conformal algebra morphism.
\end{prop}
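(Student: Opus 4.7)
My plan is to verify the axioms in the order they appear in the definition of a BiHom-dendriform conformal algebra, exploiting the fact that $\a$ and $\b$ commute with $\p$ and with each other, and that as elements of $cend(A)$ they are morphisms of the underlying dendriform conformal structure, so in particular $\a(x \<_\l y) = \a(x) \<_\l \a(y)$, $\a(x \>_\l y) = \a(x) \>_\l \a(y)$, and similarly for $\b$. The sesqui-linear identities for $\<_\l^{(\a,\b)}$ and $\>_\l^{(\a,\b)}$ follow immediately: for example, $(\p p)\>_\l^{(\a,\b)} q = \a(\p p) \>_\l \b(q) = \p(\a(p)) \>_\l \b(q) = -\l (\a(p) \>_\l \b(q)) = -\l (p \>_\l^{(\a,\b)} q)$, and analogously on the right side using $\b \p = \p \b$.

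The BiHom-multiplicativity identities (\ref{D1}) and (\ref{D2}) follow from the fact that $\a$ and $\b$ commute and are dendriform-conformal endomorphisms: concretely, $\a(p \<_\l^{(\a,\b)} q) = \a(\a(p) \<_\l \b(q)) = \a^2(p) \<_\l \a\b(q) = \a(p) \<_\l^{(\a,\b)} \a(q)$ once one uses $\a\b = \b\a$, and the three remaining cases are identical.

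The core of the argument is the verification of (\ref{D3}), (\ref{D4}), (\ref{D5}). Each is obtained from the untwisted dendriform axiom applied to the substituted arguments $\a^2(p)$, $\a\b(q)$, $\b^2(r)$. For instance, to check (\ref{D3}) I compute
\begin{align*}
(p \<_\l^{(\a,\b)} q) \<_{\l+\m}^{(\a,\b)} \b(r)
&= \a(\a(p) \<_\l \b(q)) \<_{\l+\m} \b(\b(r))
= (\a^2(p) \<_\l \a\b(q)) \<_{\l+\m} \b^2(r),\\
\a(p) \<_\l^{(\a,\b)} (q \<_\m^{(\a,\b)} r + q \>_\m^{(\a,\b)} r)
&= \a^2(p) \<_\l \b(\a(q) \<_\m \b(r) + \a(q) \>_\m \b(r))\\
&= \a^2(p) \<_\l (\b\a(q) \<_\m \b^2(r) + \b\a(q) \>_\m \b^2(r)),
\end{align*}
and these agree by the original dendriform identity at $(\a^2(p),\a\b(q),\b^2(r))$ together with $\b\a = \a\b$. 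The verifications of (\ref{D4}) and (\ref{D5}) are strictly parallel, each reducing to one of the remaining two dendriform axioms evaluated at the same triple of twisted arguments.

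For the morphism part, once one knows that $f \a = \a' f$, $f \b = \b' f$, and $f$ intertwines $\<_\l$ with $\<'_\l$ and $\>_\l$ with $\>'_\l$, the computation $f(p \<_\l^{(\a,\b)} q) = f(\a(p) \<_\l \b(q)) = \a'(f(p)) \<'_\l \b'(f(q)) = f(p) \<'^{(\a',\b')}_\l f(q)$ is immediate, and similarly for $\>$. The principal technical nuisance, and the only place one has to be careful, is the bookkeeping in (\ref{D3})--(\ref{D5}): one must consistently push all applications of $\a$ and $\b$ to the innermost arguments and use $\a\b=\b\a$ to collect them in the canonical pattern $(\a^2,\a\b,\b^2)$ before invoking the classical dendriform axiom.
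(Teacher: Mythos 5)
Your proposal is correct and follows essentially the same route as the paper: both reduce each twisted axiom to the classical dendriform conformal identity evaluated at the triple $(\a^2(p),\a\b(q),\b^2(r))$, using that $\a,\b$ are commuting dendriform conformal endomorphisms. The paper only writes out the analogue of (\ref{D3}) and leaves the remaining identities, the multiplicativity/sesquilinearity checks, and the morphism statement to the reader, all of which you verify by exactly the computation the paper intends.
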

\begin{proof}To show that $A_{(\a,\b)}$ is a BiHom-dendriform conformal algebra, we show that Eqs. (\ref{D1})- (\ref{D5}) holds. We only show one identity and leave the rest to the readers. By using the formulae for $\<_{\l}^{(\a,\b)}$ and $\>_{\l}^{(\a,\b)}$ with the fact that $\a$ and $\b$ are commuting dendriform conformal algebra endomorphisms, we compute that \begin{equation*}
	\begin{aligned}
	(p \<_{\l}^{(\a,\b)} q) \<_{\l+\m}^{(\a,\b)} \b(r) &= (\a^2 (p) \<_{\l} \a\b(q)) \<_{\l+\m} \b^2(r)\\&=  \a^2(p)\<_{\l}(\a\b(q) \<_{\m} \b^2(r)+ (\a\b(q) \>_{\m} \b^2(r)))\\&= \a(p) \<_{\l}^{(\a,\b)} (q \<_{\m}^{(\a,\b)} r+ q \>_{\m}^{(\a,\b)}r),
	\end{aligned}
	\end{equation*} for all $p,q,r \in A$ and $\l,\m\in \mathbb{C}$.
\end{proof}
\begin{rem}
	In a broader sense, consider $(A, \<_\l, \>_\l, \a, \b)$ as a BiHom-dendriform conformal algebra and $\tilde{\a}, \tilde{\b} \in cend(A)$ as two morphisms of BiHom-dendriform conformal algebras such that any two of the maps $\a,\b , \tilde{\a}, \tilde{\b} $ commutes. For any  $p,q\in A.$, define a new multiplication on $A$ by $p \<'_\l q= \tilde{\a}(p) \<_\l\tilde{\b}(q)$ and $p \>'_\l q = \tilde{\a}(p) \>_\l \tilde{\b}(q).$  Then it can be demonstrated that $(A, \<'_\l, \>'_\l , \a\circ \tilde{\a}, {\b}\circ \tilde{\b})$ is a BiHom-dendriform conformal algebra.  
\end{rem}In term of conformal bimodule, We can characterize BiHom-dendriform conformal algebras as follows:\begin{prop} Assume that  $A$ is a linear space along with the bilinear multiplication $\<_\l, \>_\l: A \otimes A \to A[\l]$ and two commuting linear maps $\a, \b: A\to A$ that are multiplicative with regards to $\<_\l$ and $\>_\l$. Define $p *_\l q = p \<_\l q + p \>_\l q,$ for all $p,q \in A.$ Then $(A, \<_\l, \>_\l, \a, \b)$ is a BiHom-dendriform conformal algebra if and only if $(A, *_\l, \a, \b)$ is a BiHom-associative conformal algebra and $(A, \a, \b)$ is a conformal bimodule over $(A, *_\l, \a, \b)$, with actions $p_\l m = p\>_\l m$ and $m_\l p = m \<_\l p,$ for all $p, m \in A$ and $\l\in \mathbb{C}$.
\end{prop}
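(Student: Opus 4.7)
The plan is to prove the biconditional by matching the three mixed–associativity axioms (D3), (D4), (D5) of the BiHom-dendriform conformal structure with the three defining axioms of a conformal bimodule (Definition \ref{def2.2}). The preliminary housekeeping—conformal sesqui-linearity with respect to $\p$ and BiHom-multiplicativity of $*_\l$ (and of the candidate actions $p_\l m := p \>_\l m$, $m_\l p := m \<_\l p$)—follows immediately by summing/taking linear combinations of the corresponding identities for $\<_\l$ and $\>_\l$, so I would dispose of these first and concentrate on the three non-trivial identities.

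For the forward direction, I would start from $(A, \<_\l, \>_\l, \a, \b)$ BiHom-dendriform and read off the three bimodule axioms directly. Taking $m=p\in A$ with the specified actions, axiom (D3) reads
\begin{equation*}
(m \<_\l q) \<_{\l+\m} \b(r) = \a(m) \<_\l (q \<_\m r + q \>_\m r) = \a(m) \<_\l (q *_\m r),
\end{equation*}
which is exactly the right-module identity. Axiom (D5) rewrites as $\a(p) \>_\l (q \>_\m m) = (p *_\l q) \>_{\l+\m} \b(m)$, the left-module identity, and (D4) reads $(p \>_\l m) \<_{\l+\m} \b(q) = \a(p) \>_\l (m \<_\m q)$, the compatibility identity. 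To obtain BiHom-associativity of $*_\l$ I would expand $(p*_\l q)*_{\l+\m}\b(r)$ into its four summands, use (D3) on the $\<\<$-piece, (D4) on the $\>\<$-piece, and (D5) on the combined $\>\>$-piece after regrouping $(p\<_\l q + p \>_\l q)\>_{\l+\m}\b(r)$; collecting terms yields $\a(p)\<_\l(q*_\m r)+\a(p)\>_\l(q*_\m r)=\a(p)*_\l(q*_\m r)$.

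The converse direction is a verbatim reversal. Assuming $(A,*_\l,\a,\b)$ is BiHom-associative and $A$ is a conformal bimodule over itself through the prescribed actions, the right-module axiom becomes (D3), the left-module axiom becomes (D5), and the compatibility axiom becomes (D4); together with the inherited sesqui-linearity and BiHom-multiplicativity, this is exactly what is needed to conclude that $(A, \<_\l, \>_\l, \a, \b)$ is a BiHom-dendriform conformal algebra.

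There is no serious obstacle; the whole argument is a symbol-by-symbol dictionary between (D3)–(D5) and the three bimodule conditions of Definition \ref{def2.2}. The only point requiring attention is that the hypothesis in the backward direction is mildly redundant: the three bimodule axioms by themselves already force $*_\l$ to be BiHom-associative (as shown by the computation above in the forward direction), so associativity of $*_\l$ need not be assumed independently. I would mention this briefly in the write-up so the equivalence is transparent, but otherwise the proof reduces to carefully writing down each identity with the substitutions $p_\l m = p\>_\l m$ and $m_\l p = m\<_\l p$ and confirming that the two sets of equations coincide term by term.
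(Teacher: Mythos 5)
Your proposal is correct: the three bimodule axioms of Definition~\ref{def2.2} (with $M=A$, actions $p_\l m=p\>_\l m$, $m_\l p=m\<_\l p$) are precisely (\ref{D3}), (\ref{D4}), (\ref{D5}), and BiHom-associativity of $*_\l$ follows by the four-term expansion you describe. The paper states this proposition without writing out a proof, but your dictionary argument is exactly the verification it uses for the analogous characterization of BiHom-NS-conformal algebras (Proposition~\ref{prop3.2}), so you are taking essentially the same route; your observation that associativity of $*_\l$ is redundant in the converse is a correct, minor refinement.
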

\begin{defn}\label{deftridendriform}A $6$-tuple  $(A, \<_\l,\>_\l,._\l, \a,\b)$ equipping a $\mathbb{C}[\p]$-module $A$, three bilinear maps $\<_\l, \>_\l, ._\l: A \otimes A \to A[\l]$ and  two commuting linear maps $\a, \b : A \to A$ is said to be a BiHom-tridendriform conformal algebra, if the following conditions hold:
	\begin{eqnarray}
	(\p p)\>_\l q= -\l (p \>_\l q), & (\p p)\<_\l q=-\l (p\<_\l q),	 (\p p)._\l q= -\l (p ._\l q), \\p \>_\l (\p q)= (\l+\p) (p \>_\l q), &p\<_\l (\p q)= (\l+\p) (p \<_\l q), p ._\l (\p q)= (\l+\p) (p ._\l q),\\
	\a(p\<_\l q) = \a(p) \<_\l \a(q),& \a(p \>_\l q) = \a(p) \>_\l \a(q), \a(p ._\l q) = \a(p) ._\l \a(q),\label{TD1}\\
	\b(p \<_\l q) = \b(p) \<_\l \b(q),& \b(p \>_\l q) = \b(p) \>_\l \b(q), \b(p._\l q) = \b(p) ._\l \b(q),\label{TD2} \\&
	(p \<_\l q) \<_{\l+\m} \b(r) = \a(p) \<_\l(q \<_\m r + q \>_\m r + q ._\m r),\label{TD3} \\&
	(p\>_\l q) \<_{\l+\m} \b(r) = \a(p) \>_\l (q \<_{\m} r), \label{TD4}\\&
	\a(p) \>_\l (q \>_\m r) = (p \<_{\l} q + p \>_\l q+ p ._\l q) \>_{\l+\m} \b(r),\label{TD5}\\& 
	\a(p) ._{\l} (q\>_\m r)= (p\<_\l q) ._{\l+\m}\b(r),\label{TD6}\\&
	\a(p)\>_\l (q._\m r) = (p\>_\l q) ._{\l+\m} \b(r),\label{TD7}\\&
	\a(p) ._\l(q \<_\m r) = (p ._\l q) \<_{\l+\m}\b(r),\label{TD8}\\&
	\a(p) ._\l(q ._\m r) = (p ._\l q) ._{\l+\m}\b (r)\label{TD9},\end{eqnarray}for all $\l,\m\in \mathbb{C}$ and $p,q,r\in A$.
\end{defn}
A BiHom-tridendriform conformal algebra $(A, \<_\l, \>_\l,._\l,\a,\b)$ yields BiHom-dendriform conformal algebra by using $p._{\l} q = 0$ for all $p, q \in A$ .
For converse case:
\begin{prop}\label{proptri1}
Consider a BiHom-tridendriform conformal algebra $(A, \<_\l, \>_\l, ._\l, \a,\b)$, We can generate a BiHom-dendriform conformal algebra $(A', \<_\l', \>_\l', \a,\b)$ out of it by  defining new multiplication by $p \>' q = p \>_\l q,$ and  $p \<'_\l q = p \<_\l q + p ._\l q$ for all $p, q \in A.$
\end{prop}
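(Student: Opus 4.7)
The plan is to directly verify all of the defining axioms of a BiHom-dendriform conformal algebra for the new operations $\>'_\l$ and $\<'_\l$, using only the corresponding axioms for the tridendriform structure together with $\mathbb{C}$-bilinearity of all the products. Since $\>'_\l = \>_\l$, the axioms involving only $\>'_\l$ (conformal sesqui-linearity and BiHom-multiplicativity with respect to $\a,\b$) are immediate. For $\<'_\l = \<_\l + ._\l$, I would note that conformal sesqui-linearity and the BiHom-multiplicativity identities \eqref{TD1} and \eqref{TD2} applied to both $\<_\l$ and $._\l$ give those identities for $\<'_\l$ by additivity, so the first two pages of axioms reduce to a line.

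The substance lies in verifying \eqref{D3}, \eqref{D4}, \eqref{D5}. I would proceed in the order \eqref{D5} $\to$ \eqref{D4} $\to$ \eqref{D3}, since the complexity grows. For \eqref{D5}, after substituting the definitions, the left side is $\a(p)\>_\l(q\>_\m r)$ and the right side expands to $(p\<_\l q + p\>_\l q + p._\l q)\>_{\l+\m}\b(r)$, which is precisely \eqref{TD5}. For \eqref{D4}, expanding
\begin{equation*}
(p\>_\l q)\<'_{\l+\m}\b(r) = (p\>_\l q)\<_{\l+\m}\b(r) + (p\>_\l q)._{\l+\m}\b(r),
\end{equation*}
and applying \eqref{TD4} to the first term and \eqref{TD7} to the second gives $\a(p)\>_\l(q\<_\m r) + \a(p)\>_\l(q._\m r) = \a(p)\>'_\l(q\<'_\m r)$ by bilinearity.

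The main obstacle, such as it is, is bookkeeping for \eqref{D3}: expanding $(p\<'_\l q)\<'_{\l+\m}\b(r)$ produces four terms, namely $(p\<_\l q)\<_{\l+\m}\b(r)$, $(p._\l q)\<_{\l+\m}\b(r)$, $(p\<_\l q)._{\l+\m}\b(r)$, and $(p._\l q)._{\l+\m}\b(r)$, to which one applies \eqref{TD3}, \eqref{TD8}, \eqref{TD6}, \eqref{TD9} respectively. The trick is to check that the resulting sum equals
\begin{equation*}
\a(p)\<_\l(q\<_\m r + q\>_\m r + q._\m r) + \a(p)._\l(q\<_\m r + q\>_\m r + q._\m r),
\end{equation*}
which is exactly $\a(p)\<'_\l(q\<'_\m r + q\>'_\m r)$ after collecting $q\<'_\m r = q\<_\m r + q._\m r$ and $q\>'_\m r = q\>_\m r$. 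All the ``mixed'' $._\l$-terms match up because \eqref{TD6}, \eqref{TD8}, \eqref{TD9} exhaust precisely the three ways $._\l$ interacts with $\<_\m$, $\>_\m$, and $._\m$ on the inner slot. No nontrivial identity beyond the tridendriform axioms is required, so the entire argument is a structured expansion.
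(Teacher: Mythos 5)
Your proposal is correct and follows essentially the same route as the paper's own proof: the sesquilinearity and multiplicativity axioms are dispatched by additivity, and the three substantive identities are verified by the same expansions, using (\ref{TD3}), (\ref{TD6}), (\ref{TD8}), (\ref{TD9}) for the four-term expansion, (\ref{TD4}) with (\ref{TD7}) for the mixed identity, and (\ref{TD5}) for the last one. The only difference is the (immaterial) order in which the identities are checked.
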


\begin{proof}
	Note that Eq. (\ref{D1}) and (\ref{D2}) for $\<'_\l$ and $\>'_\l$ are obvious. We check the remaining relations one by one:
	\begin{equation*}
	\begin{aligned}
	(p \<'_\l q) \<'_{\l+\m} \b(r) & = (p \<_\l q + p ._\l q)\<'_{\l+\m}\b(r)
	\\&=(p \<_\l q + p ._\l q)\<_{\l+\m}\b(r) +(p \<_\l q + p ._\l q)._{\l+\m}\b(r)
	\\&=(p \<_\l q)\<_{\l+\m}\b(r) + (p ._\l q)\<_{\l+\m}\b(r) +(p \<_\l q )._{\l+\m}\b(r)+ (p ._\l q)._{\l+\m}\b(r)
	\\& =\a(p)\<_\l(q\<_\m r+q\>_\m r+q._\m r)+\a(p)._\l(q\<_\m r)+\a(p)._\l(q\>_\m r)+\a(p)._\l(q._\m r) 
	\\&= \a(p)\<_\l(q\<_\m r+q\>_\m r+q._\m r)+\a(p)._\l(q\<_\m r+q\>_\m r+ q._\m r)
	\\&=\a(p)\<'_\l(q\<_\m r+q\>_\m r+q._\m r)
	\\&= \a(p)\<'_\l(q\<'_\m r+q\>'_\m r).
	\end{aligned}
	\end{equation*} The above relation is satisfied by using  Eqs. (\ref{TD3}), (\ref{TD6}), (\ref{TD8}) and (\ref{TD9}).
	\begin{equation*}
	\begin{aligned}
	(p\>'_\l q) \<'_{\l+\m} \b(r) &= (p \>_\l q) \<_{\l+\m} \b(r) + (p \>_\l q) ._{\l+\m} \b(r)
	\\& = \a(p) \>_\l ( q \<_\m r) + \a(p) \>_\l (q ._\m r)
	\\&= \a(p) \>_\l (q \<_\m r + q ._\m r) \\&= \a(p) \>'_\l(q \<'_\m r).
	\end{aligned}\end{equation*}The above relation is satisfied using Eqs. (\ref{TD4}), and (\ref{TD7}). Now using (\ref{TD6}), we have
	\begin{equation*}
	\begin{aligned}
	\a(p) \>'_\l(q \>'_\m r) &= \a(p) \>_\l (q \>_\m r)
	\\&  = (p \<_\l q+ p \>_\l q + p ._\l q) \>_{\l+\m} \b(r)
	\\& = (p \<'_\l q + p \>'_\l q  ) \>'_{\l+\m} \b(r).
	\end{aligned}
	\end{equation*} It completes the proof.
\end{proof}
\begin{prop}
	Let $(A, \<_\l, \>_\l,._\l)$ be a tridendriform conformal algebra and $\a,\b\in cend(A)$. Define conformal bilinear maps $\<_{\l}^{(\a,\b)}, \>_{\l}^{(\a,\b)},._{\l}^{(\a,\b)}: A \otimes A \to A[\l]$ by $p \<_{\l}^{(\a,\b)} q = \a(p) \<_\l \b(q)$,  $p \>_{\l}^{(\a,\b)} q =\a(p) \>_\l \b(q)$ and $p ._{\l}^{(\a,\b)} q =\a(p) ._\l \b(q)$ for all $p,q\in A$. Then the Yau twist of $A$, $A_{(\a,\b)} := (A, \<_{\l}^{(\a,\b)}, \>_{\l}^{(\a,\b)}, ._{\l}^{(\a,\b)}, \a, \b)$ is  called a BiHom-tridendriform conformal algebra. Furthermore, suppose another BiHom-tridendriform conformal algebra $(A', \<'_{\l}, \>'_{\l},.'_{\l}, \a',\b')$ satisfying $f \circ \a = \a' \circ f$ and $f \circ \b = \b ' \circ f$. Then $f : A_{(\a,\b)}\to A'_{(\a',\b')}$ is a morphism of BiHom-tridendriform conformal algebras if $f:A\to A'$ is a tridendriform-conformal algebras morphism.
\end{prop}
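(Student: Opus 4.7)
The plan is to follow exactly the template of Proposition~3.5: for each BiHom-tridendriform axiom, expand both sides using the definitions of $\<_\l^{(\a,\b)}$, $\>_\l^{(\a,\b)}$, $._\l^{(\a,\b)}$, commute $\a$ and $\b$ past one another so that the outer arguments of the outermost operation become $\a^2(p)$, $\a\b(q)$, $\b^2(r)$, and then invoke the corresponding untwisted identity of the tridendriform conformal algebra $(A,\<_\l,\>_\l,._\l)$. First I would dispatch the easy parts: sesquilinearity of each new product is immediate because $\a,\b\in cend(A)$ commute with $\p$, e.g.\ $(\p p)\<_\l^{(\a,\b)} q = \a(\p p)\<_\l\b(q) = \p\a(p)\<_\l\b(q) = -\l\bigl(p\<_\l^{(\a,\b)} q\bigr)$; the multiplicativity conditions (\ref{TD1})--(\ref{TD2}) for the three new products reduce to the multiplicativity of $\a$ and $\b$ with respect to the original products together with the commutation $\a\b=\b\a$.

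The substance is the verification of (\ref{TD3})--(\ref{TD9}), but each uses the same one-line template displayed in the proof of Proposition~3.5. For instance, the untwisted version of (\ref{TD3}),
\begin{equation*}
(x \<_\l y) \<_{\l+\m} z = x \<_\l (y \<_\m z + y \>_\m z + y ._\m z),
\end{equation*}
applied with $x=\a^2(p)$, $y=\a\b(q)$, $z=\b^2(r)$, yields
\begin{equation*}
(p \<_\l^{(\a,\b)} q) \<_{\l+\m}^{(\a,\b)} \b(r) = (\a^2(p) \<_\l \a\b(q)) \<_{\l+\m} \b^2(r) = \a(p) \<_\l^{(\a,\b)} \bigl(q \<_\m^{(\a,\b)} r + q \>_\m^{(\a,\b)} r + q ._\m^{(\a,\b)} r\bigr),
\end{equation*}
where the last equality uses $\b\a=\a\b$ to re-collect $\a\b$ on $q$. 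Each of (\ref{TD4})--(\ref{TD9}) admits an essentially identical reduction, invoking in turn the corresponding untwisted tridendriform identity; the most bookkeeping-heavy ones are (\ref{TD8}) and (\ref{TD9}), but they follow the same pattern without any new idea.

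For the morphism claim, I would verify directly that $f(p \<_\l^{(\a,\b)} q) = f(\a(p)\<_\l\b(q)) = f(\a(p))\<'_\l f(\b(q)) = \a'(f(p))\<'_\l \b'(f(q)) = f(p) \<_\l^{\prime(\a',\b')} f(q)$, using the intertwining hypotheses $f\a=\a'f$, $f\b=\b'f$ together with $f$'s compatibility with the original $\<_\l$; the checks for $\>$ and $._\l$ are identical, and the intertwining conditions on $f$ with $\a,\b$ are part of the hypothesis and pass over unchanged. The main obstacle throughout is not conceptual but purely organizational: seven nontrivial compatibility identities, each expanding into multiple terms across three operations, but once the convention of pushing all applications of $\a,\b$ outward to the arguments is fixed, the proof becomes entirely mechanical, reducing each BiHom axiom to a single invocation of its untwisted counterpart.
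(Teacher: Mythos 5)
Your proposal is correct and matches the paper's intended argument: the paper's proof is just the one-line remark that the result is straightforward from Proposition \ref{prop3.5}, i.e.\ exactly the template you use of expanding each twisted product, commuting $\a$ and $\b$ so the arguments become $\a^2(p)$, $\a\b(q)$, $\b^2(r)$, and invoking the corresponding untwisted tridendriform identity, with the morphism claim checked directly from the intertwining hypotheses. Your write-up simply carries out in detail what the paper leaves to the reader, implicitly using (as the paper does) that $\a,\b$ are commuting endomorphisms of the tridendriform structure.
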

\begin{proof}Proof is straightforward by using Proposition \ref{prop3.5}.
\end{proof}																		
\begin{rem}	In a broader sense, consider $(A, \<_\l,\>_\l,._\l, \a,\b)$ as a BiHom-tridendriform conformal algebra and $\tilde{\a}, \tilde{\b} \in cend(A)$ as two morphisms of BiHom-tridendriform conformal algebras in such a way that any two of the maps $\a, \b , \tilde{\a}, \tilde{\b} $ commute. For any  $p, q\in A$, define a new multiplication on $A$ by $p \<'_\l q= \tilde{\a}(p) \<_\l\tilde{\b}(q)$, $p \>'_\l q = \tilde{\a}(p) \>_\l \tilde{\b}(q)$  and $p .'_\l q= \tilde{\a}(p) ._\l \tilde{\b}(q)$.  Then it can be demonstrated that $(A, \<'_\l, \>'_\l , ._\l, \a\circ \tilde{\a}, {\b}\circ \tilde{\b})$ is a BiHom-tridendriform conformal algebra.
\end{rem}

\begin{prop}\label{proptri2}
	Consider a BiHom-tridendriform conformal algebra $(A, \<_\l,\>_\l,._\l, \a,\b)$. Then $(A, *_\l, \a,\b)$ is a BiHom-associative conformal algebra by the multiplication $*_\l : A \otimes A \to A[\l]$ defined by $p *_\l q  = p \<_\l q + p \>_\l q+ p._\l q$, for all $p,q\in A$.
\end{prop}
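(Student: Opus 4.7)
The plan is to verify in turn the three defining axioms of a BiHom-associative conformal algebra for the triple $(A,*_\l,\a,\b)$. The first two—conformal sesqui-linearity and the compatibility of $\a$ and $\b$ with $*_\l$—are immediate: since $*_\l$ is defined as the $\mathbb{C}$-linear sum of $\<_\l$, $\>_\l$, $._\l$, and each summand already satisfies the sesqui-linear identities together with $\a$- and $\b$-multiplicativity by Definition \ref{deftridendriform} (Eqs. (\ref{TD1}) and (\ref{TD2})), these properties pass to $*_\l$ by linearity. I would dispatch these first in one short paragraph to clear them out of the way.

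The real work is to establish the BiHom-associativity identity (\ref{eqA4}) for $*_\l$, namely
\begin{equation*}
\a(p)*_\l(q*_\m r)=(p*_\l q)*_{\l+\m}\b(r).
\end{equation*}
I would expand both sides bilinearly: the left side opens into the nine terms $\a(p)\diamond_\l(q\bullet_\m r)$ and the right side into the nine terms $(p\diamond_\l q)\bullet_{\l+\m}\b(r)$, where $\diamond,\bullet\in\{\<,\>,.\}$. The strategy is then to match these two collections of nine terms using precisely the seven axioms (\ref{TD3})--(\ref{TD9}).

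The matching goes as follows: axiom (\ref{TD3}) collapses the three $\<_\l$-terms on the left (over $q\<_\m r+q\>_\m r+q._\m r$) into the single right-hand term $(p\<_\l q)\<_{\l+\m}\b(r)$; axiom (\ref{TD5}) collapses the single term $\a(p)\>_\l(q\>_\m r)$ into the three right-hand terms of the form $(p\diamond_\l q)\>_{\l+\m}\b(r)$; and the four remaining axioms (\ref{TD4}), (\ref{TD6}), (\ref{TD7}), (\ref{TD8}), (\ref{TD9}) provide a one-to-one translation of each of the remaining five ``mixed'' left-hand terms $\a(p)\>_\l(q\<_\m r)$, $\a(p)\>_\l(q._\m r)$, $\a(p)._\l(q\<_\m r)$, $\a(p)._\l(q\>_\m r)$, $\a(p)._\l(q._\m r)$ into the corresponding right-hand counterpart. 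A short bookkeeping check then confirms that each of the nine right-hand terms appears with coefficient $1$ on both sides.

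There is no serious obstacle here; the only thing to be careful about is the bookkeeping, since nine terms on each side must be accounted for without duplication. To make the proof readable I would present the computation as a single display with the nine left-hand terms written first, then substituted using (\ref{TD3})--(\ref{TD9}) and reassembled into $(p*_\l q)*_{\l+\m}\b(r)$, annotating which axiom is used at each step. The verification is essentially the ``integration'' of the seven tridendriform identities, which by design are precisely what is needed to ensure the sum operation is associative.
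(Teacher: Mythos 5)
Your proposal is correct and follows essentially the same route as the paper: the paper likewise notes the $\a$- and $\b$-multiplicativity of $*_\l$ from Definition \ref{deftridendriform}, then expands $\a(p)*_\l(q*_\m r)$ into nine terms and converts them using (\ref{TD3})--(\ref{TD5}) and then (\ref{TD6})--(\ref{TD9}) before reassembling $(p*_\l q)*_{\l+\m}\b(r)$. The only cosmetic slip is calling (\ref{TD4}), (\ref{TD6}), (\ref{TD7}), (\ref{TD8}), (\ref{TD9}) ``the four remaining axioms'' while correctly listing five; the term-by-term matching itself is exactly right.
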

\begin{proof}
	By using Definition \ref{deftridendriform}, we have $\a(p *_\l q) = \a(p) *_\l \a(q)$ and $\b(p *_\l q) = \b(p) *_\l \b(q),$ for all $p, q\in A,$ . Now we evaluate, for $p, q, r\in A$:
	\begin{equation*}
	\begin{aligned}
	\a(p) *_\l (q *_\m r) &= \a(p) *_\l (q \<_\m r) + \a(p) *_\l (q \>_\m r) + \a(p) *_\l(q ._\m r)
	\\&=\a(p) \<_\l (q\<_\m r) + \a(p) \>_\l (q \<_\m r) + \a(p) ._\l (q \<_\m r)
	\\&	+\a(p) \<_\l (q \>_\m r) + \a(p) \>_\l (q \>_\m r) + \a(p) ._\l (q \>_\m r)
	\\&	+\a(p)\<_\l(q ._\m r) + \a(p) \>_\l (q ._\m r) + \a(p)._\l (q ._\m r)
	\\&	= \a(p) \<_\l (q \<_\m r + q \>_\m r +q._\m r)
	\\&+ \a(p) \>_\l (q \<_\m r) +\a(p) \>_\l (q \>_\m r) + \a(p) ._\l (q \<_\m r) \\&+ 
	\a(p) ._\l (q \>_\m r) 	+ \a(p) \>_\l (q ._\m r) + \a(p)._\l (q ._\m r)
	\\&	\overset{(\ref{TD3}),(\ref{TD4}),(\ref{TD5})}{=} (p \<_\l q) \<_{\l+\m} \b(r) + (p \>_\l q) \<_{\l+\m}\b(r) + (p \<_\l q) \>_{\l+\m} \b(r)
	\\&	+(p \>_\l q) \>_{\l+\m} \b(r) + (p ._\l q) \>_{\l+\m} \b(r) + \a(p)._\l (q \<_\m r)
	\\&	+\a(p) ._\l (q \>_\m r) + \a(p) \>_\l (q ._\m r) + \a(p) ._\l (q ._\m r)
	\\&\overset{(\ref{TD6}),(\ref{TD7}),(\ref{TD8}),(\ref{TD9})}{=}(p \<_\l q) \<_{\l+\m} \b(r) + (p \>_\l q) \<_{\l+\m} \b(r) + (p \<_\l q) \>_{\l+\m} \b(r)
	\\&	+(p \>_\l q) \>_{\l+\m}\b(r) + (p ._\l q)\>_{\l+\m}\b(r) + (p ._\l q) \<_{\l+\m}\b(r)
	\\&	+(p \<_\l q) ._{\l+\m}\b(r) + (p \>_\l q)._{\l+\m} \b (r) + (p._\l q)._{\l+\m}\b (r)
	\\&	= (p \<_\l q + p \>_\l q + p ._\l q) \<_{\l+\m}\b(r)+ (p \<_\l q + p \>_\l q + p._{\l} q)\>_{\l+\m}\b(r)\\&+ (p \<_{\l}q + p \>_\l q + p ._{\l} q) ._{\l+\m}\b(r)\\&= (p*_{\l} q) \<_{\l+\m}\b(r) + (p *_{\l} q)\>_{\l+\m}\b(r) + (p *_\l q) ._{\l+\m}\b (r) \\&= (p *_{\l} q) *_{\l+\m}\b (r).
	\end{aligned}
	\end{equation*} It completes the proof.
\end{proof}
\begin{cor}\label{corbihomdend}	Consider a BiHom-dendriform conformal algebra $(A, \<_\l,\>_\l, \a,\b)$. Then $(A, *_\l, \a,\b)$ is a BiHom-associative conformal algebra by the multiplication $*_\l : A \otimes A \to A[\l]$ defined by $p *_\l q  = p \<_\l q + p \>_\l q$, for all $p,q\in A$.\end{cor}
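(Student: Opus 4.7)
The plan is to derive this corollary as an immediate specialization of Proposition \ref{proptri2}. First I would observe that any BiHom-dendriform conformal algebra $(A, \<_\l, \>_\l, \a, \b)$ extends to a BiHom-tridendriform conformal algebra $(A, \<_\l, \>_\l, ._\l, \a, \b)$ by setting $p ._\l q = 0$ for all $p, q \in A$. Under this extension the axioms (\ref{TD3}), (\ref{TD4}), (\ref{TD5}) reduce to the dendriform axioms (\ref{D3}), (\ref{D4}), (\ref{D5}) which hold by hypothesis, while the sesquilinearity and multiplicativity clauses for $._\l$, together with the four mixed axioms (\ref{TD6})--(\ref{TD9}), all hold trivially because every term involving $._\l$ vanishes. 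Applying Proposition \ref{proptri2} to this extended structure then yields a BiHom-associative conformal algebra with multiplication $p *_\l q = p \<_\l q + p \>_\l q + p ._\l q = p \<_\l q + p \>_\l q$, which is precisely the statement of the corollary.

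Alternatively, one could give a direct verification: sesquilinearity and the BiHom-multiplicativity of $\a, \b$ with respect to $*_\l$ follow at once from the corresponding properties of $\<_\l$ and $\>_\l$. For the BiHom-associative identity (\ref{eqA4}), expand both $\a(p) *_\l (q *_\m r)$ and $(p *_\l q) *_{\l+\m} \b(r)$ into four terms each and match them using (\ref{D3}), (\ref{D4}), (\ref{D5}) — this is exactly the computation in Proposition \ref{proptri2} with every $._\l$-term deleted, so I would not write it out again.

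There is no real obstacle here; the corollary is a direct consequence of Proposition \ref{proptri2}, and the only sanity check is confirming that extending a BiHom-dendriform structure by the zero third operation satisfies all nine tridendriform identities, which it does trivially. The reduction-to-Proposition-\ref{proptri2} route is the cleaner and more conceptual one, and that is what I would record as the proof.
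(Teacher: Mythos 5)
Your proposal is correct and matches the paper's intent: the corollary is stated immediately after Proposition \ref{proptri2} with no separate proof, relying precisely on the observation (made earlier in the text) that a BiHom-dendriform conformal algebra becomes a BiHom-tridendriform conformal algebra upon adjoining the zero operation $p._\l q=0$, after which Proposition \ref{proptri2} applies. Your check that the mixed axioms (\ref{TD6})--(\ref{TD9}) hold trivially for the zero operation is exactly the sanity verification the paper leaves implicit, so nothing is missing.
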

Now to show the generalizations of BiHom-NS-conformal algebra in term of (tri)dendriform conformal algebras, We first introduce the BiHom-analogue of classical NS-conformal algebra as follows,
\begin{defn}
	A  $6$-tuple $(A, \<_\l, \>_\l, \vee_\l, \a, \b)$ consisting of a $\mathbb{C}[\p]$-module $A$, three bilinear multiplication maps $\<_\l, \>_\l, \vee_\l : A \otimes A \to A[\l]$ and two commuting linear maps $\a, \b : A \to A$ is said to be a BiHom-NS-conformal algebra if along with the conformal sesqui-linearity, following conditions hold:\begin{eqnarray}
	&\a (p \<_\l q) = \a (p) \<_\l \a (q), \a (p \>_\l q) = \a (p) \>_\l \a (q), \a (p \vee_\l q) = \a (p) \vee_\l \a (q), \label{BN2.2}\\
	&\b(p \<_\l q) = \b(p) \<_\l \b(q), \b (p \>_\l q)=\b (p) \>_\l \b(q), \b (p \vee_\l q) = \b (p) \vee_\l \b (q), \label{BN2.3}\\ 
	&(p \<_\l q) \<_{\l+\m}\b (r) = \a (p) \<_\l (q*_{\m} r), \label{BN2.4}\\
	&(p \>_\l q) \<_{\l+\m} \b (r) = \a (p) \>_\l (q \<_\m r), \label{BN2.5}\\
	&(p *_\l q) \>_{\l+\m} \b (r) = \a (p) \>_\l (q \>_\m r), \label{BN2.6}\\
	&(p \vee_\l q) \<_{\l+\m}\b(r) + (p *_\l q)\vee _{\l+\m}\b(r) = \a(p) \>_\l (q \vee_\m r) + \a(p) \vee_\l (q *_\m r), \label{BN2.7}\\
	&p *_\l q=p \<_\l q + p\>_\l q + p \vee_\l q. \label{BN2.8}
	\end{eqnarray}for all $p,q,r \in A$.
\end{defn}A morphism of BiHom-NS- conformal algebras $f : (A, \<_\l, \>_\l,\vee_\l,\a,\b)\to (A',\<'_\l, \>'_\l,\vee'_\l,\a',\b')$  is a linear map $f : A \to A'$, that satisfies $f\circ \a= \a'\circ f$, $f\circ \b = \b'\circ f$, $f(p \<_\l q) = f(p) \<'_\l f(q), f(p \>_\l q) = f(p) \>'_\l f(q)$ and $f(p \vee_{\l}q) = f(p) \vee'_\l f(q),$ for all $p, q \in A.$ As a result, we have $f (p*_{\l} q) = f (p) *'_\l f (q)$,  for all $p,q\in A$. Now we characterize BiHom-NS-conformal algebras in term of conformal bimodule as follows.
\begin{prop}\label{prop3.2}
	Consider a $6$-tuple $(A, \>_\l, \<_\l,\vee_\l, \a,\b)$ equipping with  a $\mathbb{C}[\p]$-module $A$, three multiplication maps  $\<_\l, \>_\l, \vee_\l: A\otimes A \to A[\l]$ and two linear commuting maps $\a,\b : A \to A$ such that $\a$ and $\b$ are multiplicative with respect to $\>_\l, \<_\l$ and $\vee_\l$. Defining a new product on $A$ (for all $p, q\in A$) by $p *_\l q = p \<_\l q + p \>_\l q + p\vee_\l q,$ provides $(A,  \>_\l, \<_\l, \vee_\l, \a, \b)$ is a BiHom-NS-conformal algebra if and only if $A_{bhas}=(A, *_\l, \a, \b)$ is a BiHom-associative conformal algebra, and $(A, \<_\l,\>_\l,\a,\b)$ is a conformal $(A, *_\l, \a, \b)$- bimodule.
\end{prop}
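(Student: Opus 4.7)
The plan is to identify the conformal bimodule structure explicitly. Following the convention used in the earlier BiHom-dendriform characterization (the unlabelled proposition preceding Definition \ref{deftridendriform}), take the left action of $(A,*_\l,\a,\b)$ on itself to be $p_\l m := p\>_\l m$ and the right action to be $m_\l p := m\<_\l p$. Under this identification, the three parts of Definition \ref{def2.2} become exactly (\ref{BN2.4}) (right module), (\ref{BN2.6}) (left module), and (\ref{BN2.5}) (left-right compatibility). The multiplicativity hypotheses on $\a$ and $\b$ recorded in (\ref{BN2.2})--(\ref{BN2.3}) simultaneously furnish multiplicativity with respect to $*_\l$ and the required module-map conditions such as $\a(p\>_\l m) = \a(p)\>_\l\a(m)$ and $\b(m\<_\l p)=\b(m)\<_\l\b(p)$.

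For the forward direction, once (\ref{BN2.4})--(\ref{BN2.6}) are read as the bimodule axioms, the only remaining task is BiHom-associativity of $*_\l$. I would expand the left-hand side using (\ref{BN2.8}) as $\a(p)*_\l(q*_\m r) = \a(p)\<_\l(q*_\m r) + \a(p)\>_\l(q*_\m r) + \a(p)\vee_\l(q*_\m r)$ and process the groups separately: the first summand collapses by (\ref{BN2.4}); expanding $\a(p)\>_\l(q*_\m r)$ via (\ref{BN2.8}) produces three pieces, of which $\a(p)\>_\l(q\<_\m r)$ and $\a(p)\>_\l(q\>_\m r)$ are handled by (\ref{BN2.5}) and (\ref{BN2.6}), while the remaining $\a(p)\>_\l(q\vee_\m r)$ is paired with $\a(p)\vee_\l(q*_\m r)$ and converted by (\ref{BN2.7}). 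Regrouping the outputs by the outer product and reapplying (\ref{BN2.8}) then rebuilds $(p*_\l q)*_{\l+\m}\b(r)$.

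For the converse, assume the bimodule hypothesis and BiHom-associativity of $*_\l$. The bimodule conditions immediately produce (\ref{BN2.4}), (\ref{BN2.5}), and (\ref{BN2.6}), so only (\ref{BN2.7}) must be recovered. To this end I would write out both sides of $\a(p)*_\l(q*_\m r) = (p*_\l q)*_{\l+\m}\b(r)$ completely via (\ref{BN2.8}), then substitute (\ref{BN2.4})--(\ref{BN2.6}) on each side to eliminate the six terms they control; what survives on the two sides is precisely (\ref{BN2.7}).

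The main obstacle is the bookkeeping around (\ref{BN2.7}): it is the unique NS-axiom without a direct module-theoretic analogue, and its asymmetric form -- two terms on each side intertwining $\vee_\l$, $\<_\l$, and $*_\l$ -- is exactly the deficit between associativity of $*_\l$ and the three bimodule identities. Confirming that the cancellation in the converse direction is clean, with no spurious residue among the nine summands on each side, is the one step that requires careful alignment.
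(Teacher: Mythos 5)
Your proposal is correct and follows essentially the same route as the paper's proof: the same identification of (\ref{BN2.4}), (\ref{BN2.6}) and (\ref{BN2.5}) with the right-module, left-module and compatibility axioms of Definition \ref{def2.2}, the same expansion of $\a(p)*_\l (q*_\m r)$ in which $\a(p)\>_\l(q\vee_\m r)$ is paired with $\a(p)\vee_\l(q*_\m r)$ and converted by (\ref{BN2.7}) before regrouping via (\ref{BN2.8}), and the same converse obtained by subtracting (\ref{BN2.4})--(\ref{BN2.6}) from the fully expanded BiHom-associativity of $*_\l$ to isolate (\ref{BN2.7}). No gap to report.
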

\begin{proof}Assume that $(A, \<_\l, \>_\l, \vee_\l, \a, \b)$ is a BiHom-NS-conformal algebra. It is easy to see that $\a$ and $\b$ are multiplicative with respect to $*_\l$, so we only check conformal  BiHom-associative identity. We compute, \begin{equation*}
	\begin{aligned}
	\a(p) *_\l (q *_\m r)&
	= \a(p) \>_\l (q\<_\m r + q \>_\m r + q \vee_\m r) + \a(p) \<_\l(q *_\m r) + \a (p)_\l \vee (q *_\m r)\\
	&= \a (p) \>_\l (q \<_\m r) + \a (p) \>_\l (q \>_\m r) + \a(p) \>_\l (q\vee_\m r)
	+\a (p) \<_\l (q *_\m r) + \a (p) \vee_\l (q *_\m r)\\
	&\overset{(\ref{BN2.4}),(\ref{BN2.5}),(\ref{BN2.6})}{=}(p *_\l q) \>_{\l+\m} \b (r) + (p \>_\l q) \<_{\l+\m} \b (r) + [\a (p) \>_\l (q \vee_\m r) + \a (p) \vee_\l (q *_\m r)] + (p \<_\l q) \<_{\l+\m} \b (r)\\
	&\overset{(\ref{BN2.7})}{=}(p *_\l q) \>_{\l+\m}\b(r) + (p\>_\l q) \<_{\l+\m} \b (r) + ((p \vee_\l q) \<_{\l+\m} \b(r) + (p *_\l q)\vee_{\l+\m} \b (r)) + (p \<_\l q) \<_{\l+\m}\b (r)\\
	&= (p *_\l q) \>_{\l+\m}\b(r) + (p \<_{\l} q + p \<_\l q + p\vee_\l q) \<_{\l+\m} \b (r)
	+ (p *_\l q) \vee_{\l+\m} \b (r)\\
	&\overset{(\ref{BN2.8})}{=}(p *_\l q) \<_{\l+\m}\b (r) + (p *_\l q) \>_{\l+\m} \b (r) + (p *_\l q) \vee_{\l+\m} \b(r)\\
	&= (p *_\l q)*_{\l+\m} \b (r),\end{aligned}
	\end{equation*}	for all $p, q, r \in A.$ It proves that $(A, *_\l, \a, \b )$ is BiHom-associative conformal algebra. The fact that $(A, \>_\l, \<_\l, \a,\b)$ is a conformal $A_{bhas}$-bimodule follows immediately from Eqs. (\ref{BN2.4})-(\ref{BN2.6}).\newline Conversely, Eqs. (\ref{BN2.4})-(\ref{BN2.6}) follow from the fact that $(A, \>_\l, \<_\l, \a,\b)$ is a conformal $A_{bhas}$-bimodule, while Eq. (\ref{BN2.7}) is obtained by subtracting Eqs. (\ref{BN2.4})-(\ref{BN2.6}) from the conformal  BiHom-associative condition $\a(p) *_\l (q *_\m r) = (p*_\l q) *_{\l+\m}\b (r)$ written explicitly for $*_\l= \<_\l+ \>_\l+ \vee_\l$.
\end{proof}
Now we show that BiHom-NS-conformal algebras can be obtained by Yau-twisting of classical NS-conformal algebras as follows.
\begin{prop}
	Consider an NS-conformal algebra $(A,\<_\l, \>_\l, \vee_\l)$ along with two commuting NS-algebra morphisms $\a, \b : A \to A$. Define new multiplications $\<_{\l} ^{(\a, \b)}, \>_{\l} ^{(\a, \b)} ,\vee_{\l} ^{(\a,\b)}
	: A \otimes  A\to  A[\l]$ by $p \<_{\l} ^{(\a,\b)} q = \a(p)\<_\l\b(q), p \>_{\l} ^{(\a,\b)} q = \a(p) \>_\l \b(q), p\vee_{\l}^{(\a,\b)} q= \a (p) \vee_\l \b (q),$ for all $p, q \in A.$ Then $A_{(\a,\b)}:= (A, \>_{\l}^{(\a,\b)}, \<_{\l} ^{(\a,\b)}, \vee_{\l} ^{(\a,\b)}, \a, \b)$ is a BiHom-NS-conformal algebra, also called as Yau-twist of $A$. In addition, assume that $(A', \<'_\l, \>'_\l, \vee'_\l)$ is another NS-conformal algebra with two commuting NS-conformal algebra morphisms $\a', \b' : A'\to A'$ and a morphism of $NS$-conformal algebras $f : A \to A'$ that satisfies $\a'\circ f= f \circ \a$ and $ \b'\circ f=f \circ\b.$ Then $f : A_{(\a,\b)} \to A'_{(\a',\b')}$ is a morphism of BiHom-NS-conformal algebras.
\end{prop}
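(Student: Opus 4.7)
The plan is to verify each axiom of a BiHom-NS-conformal algebra for $A_{(\a,\b)}$ by unfolding the twisted operations, using that $\a$ and $\b$ are commuting morphisms of the classical NS-conformal structure, and reducing to the corresponding axiom for $A$. Conformal sesquilinearity of $\<_\l^{(\a,\b)}$, $\>_\l^{(\a,\b)}$, $\vee_\l^{(\a,\b)}$ follows from $\a\p=\p\a$ and $\b\p=\p\b$ together with sesquilinearity of the classical operations; for instance, $(\p p)\<_\l^{(\a,\b)}q=\a(\p p)\<_\l\b(q)=\p\a(p)\<_\l\b(q)=-\l(p\<_\l^{(\a,\b)}q)$. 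The multiplicativity conditions (\ref{BN2.2})--(\ref{BN2.3}) reduce, via $\a\b=\b\a$ and the hypothesis that $\a$ and $\b$ are multiplicative on each of the three classical products, to identities such as $\a(\a(p)\<_\l\b(q))=\a^2(p)\<_\l\a\b(q)$, which are immediate.

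The heart of the proof is verifying (\ref{BN2.4})--(\ref{BN2.8}). A useful preliminary observation is that the twisted sum satisfies $p*_\l^{(\a,\b)}q=\a(p)*_\l\b(q)$, so that $\a$ and $\b$ are multiplicative on $*_\l$ as well. For (\ref{BN2.4}) I would compute
\begin{equation*}
(p\<_\l^{(\a,\b)}q)\<_{\l+\m}^{(\a,\b)}\b(r)=(\a^2(p)\<_\l\a\b(q))\<_{\l+\m}\b^2(r)=\a^2(p)\<_\l(\a\b(q)*_\m\b^2(r)),
\end{equation*}
the last step being the classical identity (\ref{BN2.4}) in $A$; unfolding the right-hand side $\a(p)\<_\l^{(\a,\b)}(q*_\m^{(\a,\b)}r)$ using multiplicativity of $\b$ on $*_\l$ together with $\a\b=\b\a$ yields the same expression. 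Identities (\ref{BN2.5})--(\ref{BN2.7}) follow the same fold--apply--refold pattern: both sides reduce, after unfolding the twisted operations and using multiplicativity, to the classical NS-conformal identity evaluated at the triple $\a^2(p),\a\b(q),\b^2(r)$. Identity (\ref{BN2.8}) is immediate from the definition of $*_\l^{(\a,\b)}$.

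The main obstacle is bookkeeping: each twisted identity carries several applications of $\a$ and $\b$ that must be rearranged using commutativity and multiplicativity to match the shape expected by the classical axiom. Once this reshuffling pattern is confirmed on a single identity such as (\ref{BN2.4}), the remaining verifications require no new idea. For the morphism claim, using $\a'\circ f=f\circ\a$, $\b'\circ f=f\circ\b$ and the fact that $f$ preserves each classical operation, one computes
\begin{equation*}
f(p\<_\l^{(\a,\b)}q)=f(\a(p)\<_\l\b(q))=f(\a(p))\<'_\l f(\b(q))=\a'(f(p))\<'_\l\b'(f(q))=f(p)\<^{\prime(\a',\b')}_\l f(q),
\end{equation*}
and the analogous computations hold for $\>_\l^{(\a,\b)}$ and $\vee_\l^{(\a,\b)}$; compatibility of $f$ with the structure maps $\a$ and $\b$ is the hypothesis itself.
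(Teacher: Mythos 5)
Your proposal is correct and follows essentially the same route as the paper: both observe that $p*_\l^{(\a,\b)}q=\a(p)*_\l\b(q)$, verify the twisted axioms by unfolding and applying the classical NS-conformal identities at the triple $\a^2(p),\a\b(q),\b^2(r)$ (using commutativity and multiplicativity of $\a,\b$ to refold), and treat the morphism claim as the routine computation you wrote out. The only difference is level of detail (the paper also writes out (\ref{BN2.7}) explicitly and leaves the morphism part to the reader), not method.
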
\begin{proof}One can easily see that, if we define $*_{\l}^{(\a,\b)}:= \<_{\l} ^{(\a,\b)}+ \>_{\l} ^{(\a,\b)}+ \vee_{\l} ^{(\a,\b)}$, then $p *_\l^{(\a,\b)} q = \a (p) *_\l \b (q), \forall p, q \in A.$ The conditions in Eq. (\ref{BN2.2}) and Eq. (\ref{BN2.3}) are easy to prove and left to the reader. We verify other conditions as follows:
	\begin{equation*}\begin{aligned}
	(p \<_{\l}^{(\a,\b)} q) \<_{\l+\m}^{(\a,\b)}\b (r)&=\a(\a(p) \<_{\l} \b(q)) \<_{\l+\m}\b^2 (r)= (\a^2(p) \<_{\l} \a\b(q)) \<_{{\l+\m}}\b^2 (r)\\&= \a^2(p) \<_{\l} (\a\b(q) *_{\m}\b^2 (r))= \a^2(p) \<_{\l} (\b\a(q) *_{\m}\b^2 (r))\\&=\a^2(p) \<_{\l}\b (\a(q) *_{\m}\b (r))= \a (p) \<_\l^{(\a,\b)} (q*_{\m}^{(\a,\b)} r).
	\end{aligned}\end{equation*} The conditions (\ref{BN2.5}) and (\ref{BN2.6}) can be proved similarly. Now we prove, Eq. (\ref{BN2.7})
	\begin{equation*}
	\begin{aligned}
	(p \vee_\l q) \<_{\l+\m}^{(\a,\b)} \b (r) + (p *_{\l}^{(\a,\b)} q) \vee _{\l+\m}^{(\a,\b)}\b (r)&= \a(\a(p) \vee_\l \b(q)) \<_{\l+\m} \b^2 (r) + \a(\a(p) *_\l \b(q)) \vee _{\l+\m}\b^2 (r)\\& =(\a^2(p) \vee_\l \a\b(q)) \<_{\l+\m} \b^2 (r) + (\a^2(p) *_\l \a\b(q)) \vee _{\l+\m}\b^2 (r)\\&=\a^2(p) \>_\l \b(\a(q) \vee_{\m} \b (r)) + \a^2(p) \vee_\l \b(\a(q) *_{\m}\b (r))\\&=\a(p) \>_{\l}^{(\a,\b)} (q \vee_{\m}^{(\a,\b)} r) + \a(p) \vee_{\l}^{(\a,\b)} (q *_{\m}^{(\a,\b)}r).
	\end{aligned}
	\end{equation*}Thus, we proved that $A_{(\a,\b)}$ is a BiHom-NS-conformal algebra. The last statement is easy to prove and left to the reader.
\end{proof}
BiHom-NS-conformal algebras are actually a generalization of BiHom-tridendriform conformal algebras can be shown by the following Proposition.
\begin{prop}\label{prop3.4}Assume that $(A, \<_\l,\>_\l,._\l,\a,\b)$ is a BiHom-tridendriform conformal algebra. Then $(A, \<_\l,\>_\l,._\l,\a,\b)$ is a BiHom-NS-conformal algebra.
\end{prop}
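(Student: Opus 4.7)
The plan is to identify the third operation $\vee_\l$ required by the BiHom-NS-conformal structure with the multiplication $._\l$ of the given BiHom-tridendriform conformal algebra, and then verify each of the axioms (\ref{BN2.2})--(\ref{BN2.8}) in turn. Under this identification the associated product $*_\l = \<_\l + \>_\l + \vee_\l$ coincides with the product already shown in Proposition \ref{proptri2} to be BiHom-associative, so (\ref{BN2.8}) holds tautologically, and the conformal sesqui-linearity required of $\vee_\l$ is inherited from that of $._\l$ given in Definition \ref{deftridendriform}.

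Most of the axioms reduce to direct substitution. The multiplicativity conditions (\ref{BN2.2}) and (\ref{BN2.3}) are nothing but (\ref{TD1}) and (\ref{TD2}) rewritten. After expanding $q *_\m r$ on the right-hand side, (\ref{BN2.4}) becomes exactly (\ref{TD3}); similarly (\ref{BN2.5}) is (\ref{TD4}), and (\ref{BN2.6}) follows from (\ref{TD5}) after expanding the $*_\l$ on the left. No computation beyond the definition of $*_\l$ is needed for these five checks.

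The one genuine verification is (\ref{BN2.7}). Here I would expand both occurrences of $*$ on the left-hand side, producing four summands $(p ._\l q)\<_{\l+\m}\b(r)$, $(p\<_\l q)._{\l+\m}\b(r)$, $(p\>_\l q)._{\l+\m}\b(r)$, $(p._\l q)._{\l+\m}\b(r)$, and then apply (\ref{TD8}), (\ref{TD6}), (\ref{TD7}), (\ref{TD9}) respectively to rewrite each term in the form $\a(p)\,\square_\l(\cdot)$. The three summands carrying $._\l$ on the outside collect into $\a(p)._\l(q\<_\m r + q\>_\m r + q._\m r) = \a(p)._\l(q *_\m r)$, while the remaining term is $\a(p)\>_\l(q._\m r)$; together these reproduce exactly the right-hand side of (\ref{BN2.7}). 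The main obstacle is purely bookkeeping: one has to pair each of the four expanded summands with the correct tridendriform axiom among (\ref{TD6})--(\ref{TD9}) and keep the stand-alone $\<$-term on the left of (\ref{BN2.7}) separate from the $*$-expansion, so that (\ref{TD8}) is applied once rather than absorbed into the later regrouping.
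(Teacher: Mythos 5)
Your proposal is correct and follows essentially the same route as the paper: identify $\vee_\l$ with $._\l$, observe that the multiplicativity axioms and (\ref{BN2.4})--(\ref{BN2.6}) are literally (\ref{TD1})--(\ref{TD5}) rewritten via the definition of $*_\l$, and verify (\ref{BN2.7}) by expanding the single $*_\l$ on its left-hand side and applying (\ref{TD6})--(\ref{TD9}) term by term, exactly as in the paper's computation. (Your phrase ``both occurrences of $*$'' is a harmless slip, since only one $*$ appears on that side, but the four summands and the axioms you pair them with are the right ones.)
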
\begin{proof}Note that the first $6$ axioms defining a BiHom-tridendriform conformal algebra are identical to the first $6$ axioms defining a BiHom-NS-conformal algebra, so we only check the relation in Eq. (\ref{BN2.7}). By using definition \ref{deftridendriform}, we have:
	\begin{equation*}
	\begin{aligned}
	(p ._\l q) \<_{\l+\m }\b (r) + (p *_\l q) ._{\l+\m}\b(r)&= (p ._\l q) \<_{\l+\m} \b (r) + (p \>_\l q + p \<_\l q + p ._\l q) ._{\l+\m} \b (r)\\&= \a(p)._\l(q \<_\m r) + (p \>_\l q)._{\l+\m} \b (r) + (p \<_{\l} q) ._{\l+\m} \b (r) + (p ._{\l} q) ._{\l+\m} \b (r)\\
	&=\a(p) ._\l (q \<_\m r) + \a(p) \>_\l (q ._\m r) + \a(p)._\l (q \>_\m r) + \a(p) ._{\l} (q._\m r)\\&
	= \a(p) ._\l (q \<_\m r + q\>_\m r + q ._\m r) + \a(p)\>_\l(q ._\m r)\\&
	= \a (p)._\l(q*_\m r) + \a (p) \>_\l (q._\m r).
	\end{aligned}\end{equation*}
	It completes the proof.\end{proof} Our aim now is to give a more conceptual proof of Proposition \ref{prop3.4},  For this we first recall conformal bimodule algebras over BiHom-associative conformal algebras and use them to obtain a characterization of BiHom-tridendriform conformal algebras.

 Let $(A, \tau_\l^A, \a_A, \b_A)$ be a BiHom-associative conformal algebra,  $M$ be a $\mathbb{C}[\p]$-module, $\a_M,\b_M : M \to M$ are two commuting $\mathbb{C}$-linear maps and $l_\l : A\otimes  M \to M[\l],$ $p \otimes m \mapsto p ._\l m $ and $r_\l : M \otimes A \to M[\l]$, $m \otimes  p \mapsto m ._\l p$ are two $\mathbb{C}$-bilinear maps. On the direct sum $A \oplus M$ consider the algebra structure defined by $$(p, m)._\l(p',m') = (p._\l p', p._\l m' + m._\l p'),\quad  \forall p, p' \in A, m, m'\in M,$$ denoted by $A \oplus_{0} M$. Then $A \oplus_{0} M$ is a BiHom-associative conformal algebra with structure maps $\a, \b : A \oplus_{0} M\to A \oplus_{0} M$, $\a(p, m) = (\a_A(p), \a_M (m))$ and $\b(p, m)= (\b_A(p), \b_M (m)),$ if and only if $(M, l_\l, r_\l, \a_M , \b_M )$ is a conformal $A$-bimodule. Assume again that $(A, \tau^\l_A, \a_A, \b_A)$ is a BiHom-associative conformal algebra, where $M$, $\a_M$, $\b_M$, $l_\l$, $r_\l$ are as defined above, but now assume that we also have another map $\circledcirc_\l : M \otimes M \to M[\l]$.
\begin{defn} We say that $(M, \circledcirc_\l, l_\l, r_\l, \a_M , \b_M )$ is a conformal $A$-bimodule algebra if $(A\oplus M, *_{\circledcirc_\l}, \a, \b)$ is a BiHom-associative conformal algebra, where the direct sum structure is defined by $$(p, m)*_{\circledcirc_\l}(p', m') = (p_\l p' , p._\l m'+ m._\l p' + m \circledcirc_\l m')$$ and $\a(p, m) = (\a_A(p), \a_M (m)), \b(p, m) = (\b_A(p), \b_M (m)),$ for all $p, p'\in A$ and $m, m'\in M$. \end{defn}Now, we show that, this concept is indeed a BiHom-analogue of the corresponding concept introduced for associative conformal algebras in \cite{2}.
\begin{prop} With the above notations, $(M, \circledcirc_\l, l_\l, r_\l, \a_M , \b_M)$ is a conformal $A$-bimodule algebra if and only if \newline $(M, l_\l, r_\l, \a_M, \b_M)$ is a conformal $A$-bimodule, $(M, \circledcirc_\l, \a_M , \b_M)$ is a BiHom-associative conformal algebra and satisfies the following identities, for all $p\in  A$ and $m, m'\in M$ 
	\begin{equation}\label{eqbimodule} \begin{aligned}\a_A(p)._\l (m \circledcirc_\m m')= (p ._\l m) \circledcirc_{\l+\m} \b_M (m'),\\ \a_M(m) \circledcirc_{\l} (m'._\m p) = (m \circledcirc_\l m') ._{\l+\m}\b_A(p), \\ \a_M(m) \circledcirc_\l (p ._\m m') = (m ._\l p) \circledcirc_{\l+\m} \b_M (m'). \end{aligned} \end{equation} \end{prop}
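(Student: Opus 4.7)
The plan is to prove the equivalence by writing out the BiHom-associative conformal identity for the direct sum structure $(A \oplus M, *_{\circledcirc_\l}, \a, \b)$ on triples $(p, m), (p', m'), (p'', m'')$, and then decomposing the result along the natural $A$-component and $M$-component grading. Both the multiplication $*_{\circledcirc_\l}$ and the structure maps $\a, \b$ respect this decomposition, so the identity
\[
\a(p, m) *_{\circledcirc_\l} \bigl((p', m') *_{\circledcirc_\m}(p'', m'')\bigr) = \bigl((p, m) *_{\circledcirc_\l}(p', m')\bigr) *_{\circledcirc_{\l+\m}}\b(p'', m'')
\]
will split into one equation in $A$ and one in $M$. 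The $A$-component is precisely the BiHom-associative identity \eqref{eqA4} on $A$, which is already part of the hypothesis, so nothing new is produced there. The $M$-component, on the other hand, expands into a sum of terms indexed by how many of $m, m', m''$ are present; this is where all the structure in the statement is concentrated.

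Next I would exploit the multilinearity of both sides in the six arguments $p, p', p'', m, m', m''$ to isolate each identity separately. Setting $m = m' = m'' = 0$ leaves only the $A$-identity; keeping exactly one of $m, m', m''$ nonzero yields in turn the left conformal module axiom, the bimodule compatibility, and the right conformal module axiom from Definition \ref{def2.2}, thereby forcing $(M, l_\l, r_\l, \a_M, \b_M)$ to be a conformal $A$-bimodule. Setting $p = p' = p'' = 0$ and keeping all three of $m, m', m''$ nonzero extracts precisely the BiHom-associativity of $(M, \circledcirc_\l, \a_M, \b_M)$. Finally, keeping exactly one of $p, p', p''$ and two of $m, m', m''$ nonzero produces, in the three possible positions, the three mixed identities listed in \eqref{eqbimodule}.

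For the converse, I would start from arbitrary $(p, m), (p', m'), (p'', m'') \in A \oplus M$ and expand both sides of the BiHom-associative identity for $*_{\circledcirc_\l}$. The $A$-component matches by the BiHom-associativity of $A$. The $M$-component on each side is a sum of nine terms (three choices for which slot the $M$-entry occupies on the outer product, times three for the inner product), and these group into four types: pure $._\l$ chains (handled by the three bimodule axioms), a pure $\circledcirc_\l$ chain (handled by BiHom-associativity of $\circledcirc_\l$), and three mixed chains (handled by the three identities in \eqref{eqbimodule}). Matching term by term finishes the proof.

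The main obstacle is purely organizational: the $M$-component produces nine summands on each side of the associativity identity, and one must carefully pair them up so that no term is missed and each hypothesis is invoked in exactly the right place. Once this bookkeeping is set up cleanly, both directions reduce to direct substitutions using the axioms of Definition \ref{def2.2} and the identities \eqref{eqbimodule}.
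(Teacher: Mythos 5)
Your proposal is correct and takes essentially the same route as the paper: write the BiHom-associativity of $(A\oplus M,*_{\circledcirc_\l},\a,\b)$ on general triples, split into $A$- and $M$-components, and specialize entries to zero to extract the conformal bimodule axioms, the BiHom-associativity of $\circledcirc_\l$, and the three mixed identities, the converse following by term-by-term matching. (A cosmetic point only: the $M$-component has seven summands on each side rather than nine, since the inner $A$-component contributes just one mixed term.)
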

\begin{proof}  We only sketch the proof and leave the details to the reader. The conformal BiHom-associativity condition for $(A \oplus M, *_{\circledcirc_\l}, \a, \b)$ written for elements $(p, m),(p',m'),(p'',m'') \in A \oplus M$ turns out to be equivalent to\begin{equation*}
		\begin{aligned}
		\a_A(p) ._\l (p.'_\m m'') + \a_A(p) ._\l (m'._\m p'') + \a_A(p)._\l(m'
		\circledcirc_\m m'') \\+ \a_M (m) ._\l (p.'_\m p'')
		+\a_M(m) \circledcirc_\l (p'._\m m'') + \a_M(m) \circledcirc_\l(m'
		._\m p'') + \a_M(m) \circledcirc_{\l} (m'\circledcirc_\m m'')\\
		= (p._\l p'
		)._{\l+\m}\b_M (m'') + (p._\l m'
		) ._{\l+\m} \b_A(p'') + (m _\l p') ·_{\l+\m} \b_A(p'') \\+ (m \circledcirc_\l m'
		) ._{\l+\m} \b_A(p'')
		+(p ._\l m'
		) \circledcirc_{\l+\m} \b_M(m'') + (m._\l p' ) \circledcirc_{\l+\m} \b_M(m'') + (m\circledcirc_\l m'
		) \circledcirc_{\l+\m} \b_M(m'')
		\end{aligned}
		\end{equation*}By taking $p = p'=p''=0$ we obtain $\a_M(m) \circledcirc_\l (m' \circledcirc_{\m} m'') = (m\circledcirc_\l m') \circledcirc_{\l+\m} \b_M(m'')$. By taking
		$p'' = 0,$ $m = m' = 0$, then $p = 0, m' = m'' = 0$ and then $p'= 0,$ $m = m'' = 0,$ we
		respectively obtain $\a_A(p) ._\l (p'._\m m'') = (p._\l p') ._{\l+\m}\b_M (m''), \a_M(m) ._\l (p '._{\m}p'') = (m ._\l p' )._{\l+\m}\b_A(p'')$ and
		$\a_A(p)._\l(m'._{\m} p'') = (p ._{\l}m' )._{\l+\m} \b_A(p'')$. Thus, $(M, l_{\l}, r_{\l}, \a_M , \b_M)$ is a conformal $A$-bimodule. Similarly one can obtain the relations given in Eq. (\ref{eqbimodule}).\end{proof}
	Similar to the the characterization of BiHom-NS-conformal algebras in Proposition \ref{prop3.2}, we can now characterize  BiHom-tridendriform conformal algebras as follows \begin{prop} Consider a $6$-tuple $(A, \<_\l, \>_\l, ._\l, \a, \b)$ consisting of a $\mathbb{C}[\p]$-module $A$, multiplication maps $\<_\l,\>_\l,._\l : A \otimes A \to A[\l]$ and linear maps $\a,\b : A\to A$ such that $\a$ and $\b$ are multiplicative with respect to $\<_\l, \>_\l$ and $._\l$.  Then $(A, \<_\l,\>_\l,._\l, \a,\b)$ is a BiHom-tridendriform conformal algebra with a new multiplication $p *_\l q = p \<_\l q + p \>_\l q + p._\l q,$ for all $p, q \in  A$, if and only if $(A, *_\l,\a,\b)$ is a BiHom-associative conformal algebra and  $(A, \<_\l,\>_\l,._\l, \a,\b)$ is  a conformal $(A, *_\l, \a,\b)$-bimodule algebra.
	\end{prop}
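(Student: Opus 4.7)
The plan is to prove both directions by simultaneously invoking Proposition \ref{proptri2} (which already yields the BiHom-associativity of $*_\l$ in the forward direction) and the structural characterization of conformal bimodule algebras established just above; the work then reduces to translating axioms through an appropriate dictionary.

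Concretely, I would take $M := A$ with $\a_M := \a$, $\b_M := \b$, and identify the left action with $l_\l(p \otimes m) := p \>_\l m$, the right action with $r_\l(m \otimes p) := m \<_\l p$, and the inner multiplication with $\circledcirc_\l := ._\l$. The conformal sesqui-linearity and BiHom-multiplicativity axioms (\ref{TD1})--(\ref{TD2}) are built symmetrically into the hypotheses of both sides of the equivalence. Under the above dictionary, the remaining tridendriform axioms match the defining conditions of a conformal $(A,*_\l,\a,\b)$-bimodule algebra as follows: the right $A$-module axiom corresponds to (\ref{TD3}), the left $A$-module axiom to (\ref{TD5}), and the left-right bimodule compatibility to (\ref{TD4}); the BiHom-associativity of $\circledcirc_\l = ._\l$ is exactly (\ref{TD9}); and the three compatibilities displayed in (\ref{eqbimodule}) pair off, respectively, with (\ref{TD7}), (\ref{TD8}) and (\ref{TD6}).

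For the forward direction, assuming (\ref{TD3})--(\ref{TD9}), Proposition \ref{proptri2} gives that $(A,*_\l,\a,\b)$ is BiHom-associative, and the preceding bimodule-algebra proposition together with the dictionary above certifies that $(A,\<_\l,\>_\l,._\l,\a,\b)$ is a conformal $(A,*_\l,\a,\b)$-bimodule algebra. For the converse, one starts from a BiHom-associative $(A,*_\l,\a,\b)$ together with a conformal bimodule algebra structure on $(A,\<_\l,\>_\l,._\l,\a,\b)$, and reads off each of (\ref{TD3})--(\ref{TD9}) through the same dictionary; the relation $p *_\l q = p \<_\l q + p \>_\l q + p ._\l q$ is imposed by hypothesis, so no further check is needed.

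The substantive content is minimal: no computation is required beyond what is already contained in Proposition \ref{proptri2} and in the preceding characterization of conformal $A$-bimodule algebras. The only genuine obstacle is bookkeeping, namely choosing the orientation of $l_\l$ versus $r_\l$ so that the three asymmetric compatibilities in (\ref{eqbimodule}) line up in the correct order with (\ref{TD6}), (\ref{TD7}) and (\ref{TD8}). Once this dictionary is fixed, both implications are immediate by inspection.
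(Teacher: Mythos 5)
Your proposal is correct and matches the paper's own argument: the paper likewise invokes Proposition \ref{proptri2} for the BiHom-associativity of $*_\l$ and then identifies Eq. (\ref{TD9}) with the BiHom-associativity of $._\l$ and the conditions of Definition \ref{def2.2} together with Eq. (\ref{eqbimodule}) with Eqs. (\ref{TD3})--(\ref{TD8}), reading the same equivalences backwards for the converse. Your explicit dictionary (right module $\leftrightarrow$ (\ref{TD3}), left module $\leftrightarrow$ (\ref{TD5}), compatibility $\leftrightarrow$ (\ref{TD4}), and (\ref{eqbimodule}) $\leftrightarrow$ (\ref{TD7}), (\ref{TD8}), (\ref{TD6})) is accurate and in fact spells out the bookkeeping the paper leaves implicit.
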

\begin{proof}Assume that $(A, \<_\l, \>_\l, ._\l, \a, \b)$ is BiHom-tridendriform conformal algebra. The tuple $(A, *_\l, \a,\b)$  defines a BiHom-associative conformal algebra by Proposition \ref{proptri2}, so we only have to prove that $(A, \<_\l, \>_\l, ._\l, \a, \b)$ is a conformal $(A, *_\l, \a,\b)$-bimodule algebra.\\ The fact that $(A,._\l, \a, \b)$ is BiHom-associative conformal is equivalent to Eq. (\ref{TD9}), while each of conditions in Definition \ref{def2.2} and Eq. (\ref{eqbimodule}) is equivalent to one of the condition in Eqs. (\ref{TD3})-(\ref{TD8}). These observations indicate that the converse also holds true.
 \end{proof}
\section{ (Twisted-)Rota-Baxtrer operator on BiHom-associative conformal algebra}
\begin{defn} 
	Let $A$ be a $\mathbb{C}[\p]$-module with bilinear multiplication $\tau_\l : A \otimes A \to A[\l]$ and let $\a,\b$ are two commuting linear maps on $A$, correspondingly a homomorphism $R: A\to A$ of $\mathbb{C}[\p]$-module is called Rota-Baxter operator of weight $\theta$, if for all $p, q\in A $ and $ \theta\in\mathbb{C}$, the following relation holds 
	\begin{equation}\label{eq1}
	R(p)_\l R(q) =R(R(p)_\l q+ p_\l R(q)+ \theta(p_\l q)). 
	\end{equation}
\end{defn}
\begin{prop} Consider a $\mathbb{C}[\p]$-module $A$ , a conformal multiplication $\tau_\l : A \otimes A \to A[\l]$on $A$, 
	 a Rota-Baxter operator $R : A\to A$ of weight $\theta$ for $(A,\tau_\l)$ and two linear maps $\a,\b : A \to A$ such that $\a\circ R= R\circ \a $ and $\b\circ R=R\circ \b$. Defining a new product on $A$ by $p*_\l q = \a(p)_\l\b(q),$ for all $p,q\in A.$ Then $R$ is also a Rota-Baxter operator of weight $\theta$ for $(A, *_\l)$. Specifically, if $(A, \tau_\l)$ is an associative conformal algebra and $\a, \b$ are two commuting  endomorphisms, then $R$ is a Rota-Baxter operator of weight $\theta$ on the BiHom-associative conformal algebra $(A,\tau_\l\circ (\a\otimes \b), \a, \b)$ denoted by $A_\l^{(\a,\b)}$.
\end{prop}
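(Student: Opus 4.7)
The plan is to verify the Rota-Baxter identity for $\ast_\l$ directly, using only the commutativity relations $\a\circ R = R\circ \a$, $\b\circ R = R\circ\b$ together with the Rota-Baxter identity for the original multiplication $\tau_\l$. The key observation is that the Yau-type twist $p\ast_\l q = \a(p)_\l\b(q)$ is designed precisely so that applying $R$ commutes past both $\a$ and $\b$, which should let the Rota-Baxter axiom pass unchanged from $\tau_\l$ to $\ast_\l$.

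Concretely, I would start from the left-hand side and compute
\begin{equation*}
R(p)\ast_\l R(q)=\a(R(p))_\l\,\b(R(q))=R(\a(p))_\l\,R(\b(q)),
\end{equation*}
where the second equality uses $\a R=R\a$ and $\b R=R\b$. Then I would apply the Rota-Baxter identity \eqref{eq1} for $\tau_\l$ to the pair $(\a(p),\b(q))$, obtaining
\begin{equation*}
R(\a(p))_\l R(\b(q)) = R\bigl(R(\a(p))_\l\b(q)+\a(p)_\l R(\b(q))+\theta(\a(p)_\l\b(q))\bigr).
\end{equation*}
Finally, pushing $R$ back through $\a$ and $\b$ in the first two terms using the commutation relations, and recognizing $\a(p)_\l\b(q)=p\ast_\l q$ throughout, yields exactly
\begin{equation*}
R(p)\ast_\l R(q)=R\bigl(R(p)\ast_\l q + p\ast_\l R(q) + \theta(p\ast_\l q)\bigr),
\end{equation*}
which is the Rota-Baxter identity of weight $\theta$ for $(A,\ast_\l)$.

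For the second (specific) statement, I would simply observe that if $(A,\tau_\l)$ is an associative conformal algebra and $\a,\b$ are commuting endomorphisms compatible with $\p$, then $A^{(\a,\b)}_\l=(A,\tau_\l\circ(\a\otimes\b),\a,\b)$ is a BiHom-associative conformal algebra (this is the standard Yau twist construction, already invoked earlier in the paper). Since the product on $A^{(\a,\b)}_\l$ is exactly the $\ast_\l$ defined above, the first part applies verbatim and gives that $R$ is a Rota-Baxter operator of weight $\theta$ on $A^{(\a,\b)}_\l$. There is no serious obstacle here: the only thing to be careful about is the order in which $R$ is pushed through $\a$ and $\b$, and this is handled uniformly by the hypothesis that $R$ commutes with both structure maps; the conformal variable $\l$ plays no role beyond being carried along.
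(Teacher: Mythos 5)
Your proposal is correct and follows essentially the same computation as the paper: rewrite $R(p)\ast_\l R(q)=\a(R(p))_\l\b(R(q))=R(\a(p))_\l R(\b(q))$, apply the weight-$\theta$ Rota-Baxter identity for $\tau_\l$ to the pair $(\a(p),\b(q))$, and push $R$ back through $\a$ and $\b$ to recover the identity for $\ast_\l$. The treatment of the specific BiHom case via the Yau twist also matches the paper's intent, so there is nothing to add.
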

\begin{proof}We compute:
	\begin{equation*}\begin{aligned}
	R(p)*_\l R(q)=\a(R(p))_\l\b(R(q)) &= R(\a(p))_\l R(\b(q))
	\\&= R(R(\a(p))_\l \b(q) + \a(p)_\l R(\b(q))+\theta(\a(p)_\l\b(q)))
	\\&= R(\a(R(p))_\l \b(q) + \a(p)_\l \b(R(q))+\theta(\a(p)_\l\b(q)))
	\\&= R(R(p) *_\l q + p *_\l R(q)+ \theta(p*_\l q)).
	\end{aligned}
	\end{equation*}It completes the proof.
\end{proof}
\begin{prop}\label{proptri} Consider a Rota-Baxter operator $R : A \to A$ of weight $\theta$ on the  BiHom-associative conformal algebra $(A, \tau_\l,\a,\b)$ that commute with $\a$ and $\b$. Then a BiHom-tridendriform conformal algebra $(A, \<_\l,\>_\l,._\l,\a,\b)$ can be obtained by $p \<_\l q = p_\l R(q)$, $p \>_\l q= R(p)_\l q$ and $p._\l q = \theta(p_\l q)$ for all $p, q \in A$.\end{prop}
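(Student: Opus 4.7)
The plan is a direct verification that the three products $\<_\l$, $\>_\l$, $._\l$ satisfy every axiom in Definition \ref{deftridendriform}. The conformal sesqui-linearity for each product is immediate from the sesqui-linearity of $\tau_\l$ together with the fact that $R$, being a $\mathbb{C}[\p]$-module homomorphism, commutes with $\p$ (and the scalar $\theta$ is harmless). The multiplicativity conditions (\ref{TD1}) and (\ref{TD2}) follow at once from the multiplicativity of $\a$ and $\b$ with respect to $\tau_\l$ combined with the standing hypotheses $\a R = R\a$ and $\b R = R\b$.

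The substantive work lies in (\ref{TD3})--(\ref{TD9}), which I would organize into three natural groups. First, (\ref{TD3}) and (\ref{TD5}) are the only axioms that genuinely require the Rota-Baxter relation (\ref{eq1}): for (\ref{TD3}) one rewrites $(p_\l R(q))_{\l+\m}\b(R(r))$ via BiHom-associativity (\ref{eqA4}) as $\a(p)_\l (R(q)_\m R(r))$, then applies the Rota-Baxter identity to expand the inner product as $R(R(q)_\m r + q_\m R(r) + \theta(q_\m r))$, which reassembles into $\a(p)\<_\l(q\<_\m r + q\>_\m r + q._\m r)$. Identity (\ref{TD5}) is entirely analogous but expands $R(p)_\l R(q)$ on the opposite side of BiHom-associativity. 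Second, (\ref{TD4}), (\ref{TD6}) and (\ref{TD7}) require only BiHom-associativity together with the commutation of $R$ (and of the scalar $\theta$) with $\a$ and $\b$: each of these axioms carries exactly one factor of $R$ or $\theta$, and (\ref{eqA4}) simply transports it across $\tau_\l$ from one side to the other. Third, (\ref{TD8}) and (\ref{TD9}) reduce to BiHom-associativity multiplied by one or two copies of $\theta$ respectively; no further input is needed.

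The only genuine obstacle is bookkeeping: one must keep careful track of how $R$, $\a$, $\b$ and $\p$ commute, and distinguish $\b(R(r))$ from $R(\b(r))$, which is precisely what the commutation hypotheses secure. In the write-up I would display the computations for (\ref{TD3}) and (\ref{TD5}) in full as the representative cases in which the Rota-Baxter identity is actually invoked, and note that the remaining five identities follow by analogous routine manipulations.
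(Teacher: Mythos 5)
Your proposal is correct and follows essentially the same route as the paper: a direct check of the axioms in Definition \ref{deftridendriform}, invoking the Rota-Baxter identity (\ref{eq1}) only for (\ref{TD3}) and (\ref{TD5}) after a BiHom-associativity rewrite, with the remaining identities (\ref{TD4}), (\ref{TD6})--(\ref{TD9}) handled by (\ref{eqA4}) together with the commutation of $R$ (and the scalar $\theta$) with $\a$ and $\b$ — exactly the computations the paper displays (it writes out (\ref{TD3})--(\ref{TD5}) and leaves the rest to the reader). The only quibble is descriptive: (\ref{TD4}) actually carries two occurrences of $R$ and (\ref{TD8}) one occurrence of $R$ alongside $\theta$, but this does not affect the validity of your argument.
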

\begin{proof}
	To show that $(A,\<_\l,\>_\l,._\l ,\a,\b)$ is a BiHom-tridendriform conformal algebra, we staisfy the following identities of the Definition \ref{deftridendriform}.
	\begin{equation*}
	\begin{aligned}
	&(p \<_\l q) \<_{\l+\m} \b(r) - \a(p) \<_\l (q \<_\m r) -\a(p) \<_\l (q \>_\m r) -\a(p) \<_\l (q ._\m r)\\&
	= (p_\l R(q)) \<_{\l+\m} \b(r) -\a(p) \<_\l (q_\m R(r)) -\a(p) \<_\l(R(q)_\m r) -\a(p) \<_\l (\theta q_\m r)\\&
	= (p_\l R(q))_{\l+\m}R(\b(r))-\a(p)_\l R(q_\m R(r))-\a(p)_\l R(R(q)_\m r) -\a(p)_\l R(\theta q_\m r)
	\\&= (p_\l R(q))_{\l+\m}\b(R(r))-\a(p)_\l R(q_\m R(r))-\a(p)_\l R(R(q)_\m r) -\a(p)_\l R(\theta q_\m r)
	\\&= \a(p)_\l(R(q)_\m R(r))-\a(p)_\l R(q_\m R(r) + R(q)_\m r + \theta q_\m r) \\&= 0.
	\end{aligned}
	\end{equation*}
	\begin{equation*}
	\begin{aligned}
(p\>_\l q) \<_{\l+\m}\b(r) -\a(p) \>_{\l} (q \<_{\m} r)
	&= (R(p)_\l q) \<_{\l+\m}\b(r) -\a(p)\>_{\l} (q_\m R(r))
	\\& = (R(p)_\l q)_{\l+\m}R(\b(r)) - R(\a(p))_\l(q_\m R(r))
	\\&= (R(p)_\l q)_{\l+\m}\b(R(r)) - \a(R(p))_{\l}(q_\m R(r))
	\\&= \a(R(p))_{\l}(q_\m R(r)) - \a(R(p))_\l(q_\m R(r))\\&= 0.
	\end{aligned}\end{equation*}	
	\begin{equation*}
	\begin{aligned}
	&\a(p) \>_\l (q \>_\m r) - (p \<_\l q) \>_{\l+\m}\b(r) - (p \>_\l q) \>_{\l+\m}\b(r) - (p ._\l q) \>_{\l+\m} \b(r)
	\\&= \a(p)\>_\l (R(q)_\m r) - (p_\l R(q)) \>_{\l+\m} \b(r) - (R(p)_\l q) \>_{\l+\m}\b(r) -(\theta p_\l q) \>_{\l+\m} \b(r)
	\\&= R(\a(p))_\l(R(q)_\m r) - R(p_\l R(q))_{\l+\m}\b(r) - R(R(p)_\l q)_{\l+\m}\b(r) - R(\theta p_\l q)_{\l+\m}\b(r)
	\\&= \a(R(p))_\l(R(q)_\m r) - (R(p)_{\l}R(q))_{\l+\m}\b(r) \\&= (R(p)_\l R(q))_{\l+\m}\b(r) - (R(p)_\l R(q))_{\l+\m}\b(r) \\&= 0.
	\end{aligned}
	\end{equation*}
	Other identities are easy to prove and left to the reader. This completes the proof.	
\end{proof}
\begin{cor}\label{cor6.3} Consider a  Rota-Baxter operator $R : A \to A$ of weight $0$ on the  BiHom-associative conformal algebra $(A, \tau_\l,\a,\b)$ that commute with $\a$ and $\b$. Then a BiHom-dendriform conformal algebra $(A, \<_\l,\>_\l,\a,\b)$ can be obtained by $p \<_\l q = p_\l R(q)$ and $p \>_\l q= R(p)_\l q,$ for all $p, q \in A$.
\end{cor}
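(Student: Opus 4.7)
The plan is to obtain this as an immediate consequence of Proposition \ref{proptri} by specializing to weight zero and observing that the third operation becomes trivial. First I would apply Proposition \ref{proptri} with $\theta = 0$ to the given Rota-Baxter operator $R$; this produces a BiHom-tridendriform conformal algebra on $A$ with operations $p \<_\l q = p_\l R(q)$, $p \>_\l q = R(p)_\l q$, and $p ._\l q = \theta\,(p_\l q) = 0$ for all $p,q \in A$.

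Next I would invoke the observation made right after Definition \ref{deftridendriform}, namely that a BiHom-tridendriform conformal algebra in which the third multiplication is identically zero collapses to a BiHom-dendriform conformal algebra: under $._\l \equiv 0$, axioms (\ref{TD6})--(\ref{TD9}) are vacuous, axiom (\ref{TD3}) reduces to (\ref{D3}), (\ref{TD4}) coincides with (\ref{D4}), and (\ref{TD5}) becomes (\ref{D5}); the sesquilinearity and multiplicativity conditions (\ref{TD1})--(\ref{TD2}) restrict to (\ref{D1})--(\ref{D2}). Therefore $(A,\<_\l,\>_\l,\a,\b)$ satisfies all the defining axioms of a BiHom-dendriform conformal algebra.

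If one prefers a direct verification instead, the same three computations carried out in the proof of Proposition \ref{proptri} can be repeated verbatim with $\theta=0$: the $._\l$-terms simply drop out, and the Rota-Baxter identity $R(p)_\l R(q) = R(R(p)_\l q + p_\l R(q))$ yields (\ref{D3}), (\ref{D4}), and (\ref{D5}) respectively. The sesquilinearity and $\a,\b$-multiplicativity of $\<_\l, \>_\l$ follow from those of $\tau_\l$ together with the commutation $R\a = \a R$ and $R\b = \b R$. Since there is no real obstacle here, only bookkeeping, the main (very minor) subtlety is making sure that the weight-zero Rota-Baxter identity is applied in the correct slot when verifying (\ref{D5}), where $R$ must be pulled out of $(R(p)_\l q)_{\l+\m}\b(r)$ via conformal BiHom-associativity before the Rota-Baxter relation is used.
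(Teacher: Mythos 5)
Your proposal matches the paper's own (implicit) argument: the corollary is stated as an immediate consequence of Proposition \ref{proptri} with $\theta=0$, combined with the observation made right after Definition \ref{deftridendriform} that a BiHom-tridendriform conformal algebra with $p._\l q=0$ is a BiHom-dendriform conformal algebra. Your reasoning, including the optional direct verification, is correct and essentially identical to the intended proof.
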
As a consequence of Propositions \ref{proptri1} and \ref{proptri}, we obtain:
\begin{cor}Let $(A, \tau_\l,\a,\b)$ be a BiHom-associative conformal algebra and $R : A \to A$ a Rota-Baxter operator of weight $\theta$ such that $R \circ \a = \a\circ R$ and $R\circ \b = \b\circ R$. Define operations $\<'_\l$ and $\>'_\l$ on $A$ by $p \<'_\l q = p_\l R(q) + \theta(p_\l q)$ and $p \>'_\l q= R(p)_\l q,$ for all $p, q \in A.$ Then $(A, \<'_\l, \>'_\l, \a, \b)$ is a BiHom-dendriform conformal algebra.
\end{cor}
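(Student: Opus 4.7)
The plan is to prove this corollary by directly composing the two previous results cited in the statement, namely Proposition \ref{proptri} and Proposition \ref{proptri1}. No new computation should be required: the operations $\<'_\l$ and $\>'_\l$ defined in the corollary are precisely what appears when one first forms the tridendriform structure from $R$ and then collapses the middle operation into the left one.

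More concretely, first I would apply Proposition \ref{proptri} to the Rota-Baxter operator $R$ of weight $\theta$ on the BiHom-associative conformal algebra $(A,\tau_\l,\a,\b)$, which (using that $R$ commutes with $\a$ and $\b$) produces a BiHom-tridendriform conformal algebra $(A,\<_\l,\>_\l,._\l,\a,\b)$ with operations
\begin{equation*}
p\<_\l q = p_\l R(q), \qquad p\>_\l q = R(p)_\l q, \qquad p._\l q = \theta(p_\l q).
\end{equation*}
Then I would invoke Proposition \ref{proptri1}, which converts any BiHom-tridendriform conformal algebra into a BiHom-dendriform conformal algebra by setting $p\>'_\l q = p\>_\l q$ and $p\<'_\l q = p\<_\l q + p._\l q$.

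Substituting the explicit formulas for $\<_\l$, $\>_\l$, $._\l$ from the first step into the recipe of the second step yields
\begin{equation*}
p\>'_\l q = R(p)_\l q, \qquad p\<'_\l q = p_\l R(q) + \theta(p_\l q),
\end{equation*}
which are exactly the operations in the statement. Hence $(A,\<'_\l,\>'_\l,\a,\b)$ is a BiHom-dendriform conformal algebra, and the corollary follows.

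There is no real obstacle here: the only thing worth verifying is that Proposition \ref{proptri} indeed applies under the stated hypotheses (the commutation of $R$ with $\a$ and $\b$ is precisely what Proposition \ref{proptri} requires), and that the arithmetic substitution into Proposition \ref{proptri1} produces the claimed formulas. Both checks are immediate, which is why I would present the argument in a single short sentence citing the two earlier propositions rather than re-deriving the BiHom-dendriform axioms (\ref{D1})--(\ref{D5}) from scratch.
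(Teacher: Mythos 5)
Your proposal is correct and matches the paper exactly: the corollary is stated there precisely "as a consequence of Propositions \ref{proptri1} and \ref{proptri}," i.e., first build the BiHom-tridendriform structure from $R$ and then collapse $._\l$ into $\<_\l$. The substitution you carry out is the whole argument, so nothing further is needed.
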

As a consequence of Propositions \ref{proptri2} and \ref{proptri} we obtain:
\begin{cor} Let $(A, \tau_\l,\a,\b)$ be a BiHom-associative conformal algebra and $R : A \to A$ a Rota-Baxter operator of weight $\theta$ on a BiHom-associative conformal algebra, such that $R \circ \a = \a\circ R$ and $R\circ \b = \b\circ R$. If we define on $A$, a new multiplication by $p *_\l q = p_\l R(q) + R(p)_\l q + \theta(p_\l q),$ for all $p, q\in A,$ then $(A, *_\l, \a, \b)$ is a BiHom-associative conformal algebra.
\end{cor}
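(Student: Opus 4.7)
The plan is to obtain this corollary by chaining the two results that immediately precede it, so that essentially no fresh computation is required. First I would introduce three auxiliary operations on $A$:
\[
p \<_\l q := p_\l R(q), \qquad p \>_\l q := R(p)_\l q, \qquad p ._\l q := \theta (p_\l q).
\]
By Proposition \ref{proptri}, which applies verbatim under the present hypotheses (Rota-Baxter of weight $\theta$, with $R$ commuting with $\a$ and $\b$), the tuple $(A, \<_\l, \>_\l, ._\l, \a, \b)$ is a BiHom-tridendriform conformal algebra. All of the real work (unpacking the Rota-Baxter identity in each of the seven tridendriform axioms) has already been performed there.

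Next I would feed this tridendriform structure into Proposition \ref{proptri2}, which says that for any BiHom-tridendriform conformal algebra the sum of the three operations defines a BiHom-associative conformal algebra with the same pair of structure maps $\a$ and $\b$. Since
\[
p \<_\l q + p \>_\l q + p ._\l q = p_\l R(q) + R(p)_\l q + \theta (p_\l q) = p *_\l q,
\]
this induced sum is exactly the multiplication named in the statement, so $(A, *_\l, \a, \b)$ inherits the BiHom-associative conformal structure.

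There is essentially no obstacle at this step, precisely because both hard pieces have already been settled upstream: the tridendriform axioms in Proposition \ref{proptri} and the BiHom-conformal-associativity of the sum multiplication in Proposition \ref{proptri2}. The only items worth mentioning in passing, rather than proving, are the conformal sesqui-linearity of $*_\l$ and the multiplicativity of $\a$ and $\b$ with respect to $*_\l$; both pass termwise from the corresponding properties of $\tau_\l$, using the commutation relations $R\circ\a = \a\circ R$ and $R\circ\b = \b\circ R$ together with the $\mathbb{C}[\p]$-linearity of $R$. Thus the proof reduces to a two-line citation of Propositions \ref{proptri} and \ref{proptri2} followed by the identification $\<_\l + \>_\l + ._\l = *_\l$.
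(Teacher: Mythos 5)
Your proposal is correct and coincides with the paper's own argument: the corollary is stated there precisely as a consequence of Proposition \ref{proptri} (which produces the BiHom-tridendriform structure $\<_\l,\>_\l,._\l$ from the weight-$\theta$ Rota-Baxter operator) combined with Proposition \ref{proptri2} (which shows the sum of the three operations is BiHom-associative conformal). Your identification $\<_\l+\>_\l+._\l=*_\l$ and the remark on multiplicativity of $\a,\b$ match the intended reasoning exactly.
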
Now we introduce the following concept:
\begin{defn}\label{def6.6} Let $(D,\<_\l, \>_\l,\a,\b)$ be a BiHom-dendriform conformal algebra. A Rota-Baxter operator of weight $0$ on $D$ is a linear map $R : D\to D$ such that $R \circ \a = \a\circ R$ and $R\circ \b = \b\circ R$ and the following conditions are satisfied, for all $p, q \in D$: \begin{eqnarray}&R(p)\>_\l R(q) = R(p \>_\l R(q) + R(p) \>_\l q), \label{eq59}\\& 
	R(p) \<_\l R(q) = R(p \<_\l R(q) + R(p) \<_\l q).\label{eq60}
	\end{eqnarray}
\end{defn}
From the Corollary \ref{corbihomdend}, we know that $(D, *_\l,\a,\b)$ is a BiHom-associative conformal algebra. Adding the Eqs. (\ref{eq59}) and (\ref{eq60}) in the above definition, we obtain that $R$ is also a Rota-Baxter operator of weight $0$ for $(D,*_\l)$:\begin{equation*}R(p) *_\l R(q) = R(p *_\l R(q)+ R(p)*_\l q).
\end{equation*}

Now, we introduce the concept of twisted Rota-Baxter operator in the framework of BiHom-associative conformal algebra and show its relation with BiHom-NS-conformal algebras. Let $(A, \tau_\l^A, \a_A, \b_A)$ be a BiHom-associative conformal algebra and $(M, l_\l, r_\l, \a_M , \b_M )$ be a conformal $A$-bimodule. A Hochschild $2$-cocycle on $A$ with coefficients in $M$ is a skew symmetric bilinear map $H_\l : A \otimes A \to M[\l]$, that satisfies :
\begin{eqnarray}
&H_\l \circ (\a_{A} \otimes \a_{A}) = \a_{M }\circ H_\l, \quad H_\l \circ (\b_{A} \otimes \b_{A})= \b_{M} \circ H_\l, \label{5.1}\\&
{\a_{A}(p)}_{\l} H_{\m}(q, r)-  H_{\l+\m}(p_\l q, \b_A(r)) + H_\l(\a_A(p),q_\m r)- H_\l(p, q)_{\l+\m}\b_A (r) = 0\label{5.2}.\end{eqnarray}for all $p, q, r \in A$
\begin{defn}
	A linear map $R : M \to A$ is said to be an $H$-twisted Rota-Baxter operator, if for all $m, n\in M$, the following identities hold:
	\begin{eqnarray}R\circ \a_M=\a_A\circ R&,& R\circ \b_M=\b_A\circ R\label{5.3}
	\\ R (m)_\l R(n) &=& R(m _\l R(n) + R (m) _\l n + H_\l(R (m), R (n))).\label{5.4}
	\end{eqnarray}Where $H_\l$ is a Hochschild $2$-cocycle , $M$ is a conformal $A$-bimodule over an associative conformal algebra $A$. 
\end{defn} 
\begin{prop}
	Let $(A, \tau_\l^{A}, \a_A, \b_A) $ be a BiHom-associative conformal algebra, let $(M, l_\l, r_\l, \a_M, \b_M)$ be a conformal $A$-bimodule. Assuming  $H_\l : A \otimes A \to M[\l]$ be a Hochschild $2$-cocycle and  $R : M \to A $ be a $H$-twisted Rota-Baxter operator, we can define the following operations on $A$, for all $m, n \in M$: \begin{equation} \begin{aligned}
	m \>_\l n = R(m)_\l n, ~~~ m\<_\l n = m_\l R(n), ~~~~~ m \vee_{\l} n = H_\l(R(m), R(n)).\end{aligned}
	\end{equation} Then $(M, \<_\l, \>_\l, \vee_\l, \a_M , \b_M)$ is a BiHom-NS-conformal algebra.
\end{prop}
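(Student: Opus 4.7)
The plan is to verify the seven axioms of a BiHom-NS-conformal algebra one by one for the structure $(M, \<_\l, \>_\l, \vee_\l, \a_M, \b_M)$. First, the conformal sesqui-linearity of $\<_\l$, $\>_\l$, $\vee_\l$ follows immediately from the conformal sesqui-linearity of the bimodule actions $l_\l$, $r_\l$, of $\tau_\l^A$, and of $H_\l$. The multiplicativity identities (\ref{BN2.2}) and (\ref{BN2.3}) follow from the hypothesis $\a_A\circ R=R\circ \a_M$, $\b_A\circ R=R\circ \b_M$ of (\ref{5.3}), combined with the multiplicativity of $\a_A,\b_A,\a_M,\b_M$ with respect to the bimodule actions, and the compatibility (\ref{5.1}) between $H_\l$ and the structure maps.

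Before checking the remaining axioms, I would record the central observation that the twisted Rota-Baxter identity (\ref{5.4}) rewrites as $R(m)_\l R(n)=R(m*_\l n)$, where $m*_\l n=m_\l R(n)+R(m)_\l n+H_\l(R(m),R(n))=m\<_\l n+m\>_\l n+m\vee_\l n$. This immediately gives (\ref{BN2.8}) and furnishes the key bridge used throughout: $R$ intertwines $*_\l$ on $M$ with $\tau_\l^A$ on $A$.

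Next I would check (\ref{BN2.4})--(\ref{BN2.6}) by short computations relying on the conformal bimodule axioms of Definition \ref{def2.2} together with the key bridge. For (\ref{BN2.4}), applying the right-module identity to $(m_\l R(n))_{\l+\m}\b_A(R(p))$ gives $\a_M(m)_\l(R(n)_\m R(p))$, and then the bridge turns $R(n)_\m R(p)$ into $R(n*_\m p)$, matching $\a_M(m)\<_\l(n*_\m p)$. Identity (\ref{BN2.5}) follows directly from the bimodule compatibility ${(p_\l m)}_{\l+\m}\b_A(q)=\a_A(p)_\l(m_\m q)$ applied with $p=R(m)$, $q=R(p)$. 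Identity (\ref{BN2.6}) uses the bridge to rewrite $R(m*_\l n)=R(m)_\l R(n)$ and then applies the left-module identity.

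The main obstacle will be axiom (\ref{BN2.7}), which does not reduce to a single bimodule axiom or to the Rota-Baxter identity alone; this is precisely where the Hochschild $2$-cocycle hypothesis enters. I would expand the LHS and RHS using the definitions, obtaining
\begin{equation*}
H_\l(R(m),R(n))_{\l+\m}\b_A(R(p))+H_{\l+\m}(R(m)_\l R(n),\b_A(R(p)))
\end{equation*}
on the left and
\begin{equation*}
\a_A(R(m))_\l H_\m(R(n),R(p))+H_\l(\a_A(R(m)),R(n)_\m R(p))
\end{equation*}
on the right (after using (\ref{5.3}) and the bridge in both the $\vee_\l$ and the $\<_\l$, $\>_\l$ factors). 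The equality of these two expressions is exactly the Hochschild $2$-cocycle condition (\ref{5.2}) evaluated at the triple $(R(m),R(n),R(p))\in A^{\otimes 3}$, so (\ref{BN2.7}) holds. With all axioms verified, the proof concludes.
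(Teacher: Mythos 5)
Your proposal is correct and follows essentially the same route as the paper: rewrite the twisted Rota--Baxter identity (\ref{5.4}) as $R(m)_\l R(n)=R(m*_\l n)$, verify (\ref{BN2.4})--(\ref{BN2.6}) from the conformal bimodule axioms of Definition \ref{def2.2} together with (\ref{5.3}), and reduce (\ref{BN2.7}) to the Hochschild $2$-cocycle condition (\ref{5.2}) evaluated at $(R(m),R(n),R(p))$. The only difference is presentational: you state the bridge $R(m*_\l n)=R(m)_\l R(n)$ explicitly up front and also record the routine sesqui-linearity and multiplicativity checks, which the paper leaves implicit.
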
\begin{proof}We need to check the relations in Definition of BiHom-NS conformal algebra, let us assume $p, q, r \in M$, we have:
	\begin{equation*} \begin{aligned} (p \<_\l q)\<_{\l+\m} \b_M(r) &= (p _\l R(q))_{\l+\m} R(\b_M (r))\\& \overset{(\ref{5.3})}{=}(p_\l R(q)) _{\l+\m} \b_A(R(r))\\&\overset{Def.(\ref{def2.2})}{=} \a_M (p) _\l (R(q)_\m R(r))\\&\overset{(\ref{5.4})}{=}\a_M(p) _\l R(R(q) _\m r + q _\m R(r) + H_\m(R(q), R(r)))\\ &= \a_M (p) _\l R(q\>_\m r + q \<_\m r + q\vee_\m r) \\&= \a_M(p)\<_\l(q *_\m r). \end{aligned} \end{equation*}
	\begin{equation*}
	\begin{aligned}(p \>_\l q) \<_{\l+\m} \b_M (r) &= (R(p) _\l q)_{\l+\m}R(\b_M (r)) \\&\overset{(\ref{5.3})}{=} (R(p) _\l q)_{\l+\m}\b_A(R(r))
	\\&\overset{Def.(\ref{def2.2})}{=} \a_A(R(p))_\l(q _\m R(r))\\& \overset{(\ref{5.3})}{=}
	= R(\a_M (p)) _{\l} (q _\m R(r)) \\&= \a_M(p) \>_\l (q\<_\m r).
	\end{aligned}
	\end{equation*}
	\begin{equation*}
	\begin{aligned}
	(p *_\l q) \>_{\l+\m} \b_M(r) &= (R(p)_\l q + p_\l R(q) + H_\l(R(p), R(q)))\>_{\l+\m}\b_M(r)\\&= R(R(p)_\l q +p_\l R(q) + H_\l(R(p), R(q)))_{\l+\m}\b_M (r)\\&
\overset{(\ref{5.4})}{=} (R(p)_\l R(q))_{\l+\m}\b_M (r)\\&
	 = \a_A(R(p))_\l (R(q)_\m r)\\&
	\overset{(\ref{5.3})}{=}R(\a_M (p))_\l(R(q)_\m r)\\&= \a_M(p)\>_\l (q \>_\m r).\end{aligned}\end{equation*}
	Finally, we check Eq. (\ref{BN2.7}):
	\begin{equation*}
	\begin{aligned}
	& \a_M(p)\>_\l (q \vee_\m r) - (p*_\l q)\vee_{\l+\m} \b_M (r) + \a_M(p) \vee_\l (q *_\m r) - (p \vee_\l q) \<_{\l+\m} \b_M (r)                                  \\
	& =  R(\a_M (p))_{\l} H_\m( R(q),  R(r))- (p \>_\l q + p \<_\l q + p \vee_\l q) \vee_{\l+\m} \b_M (r)                                                            \\
	& + \a_M(p) \vee_\l (q\>_\m r + q \<_\m r + q \vee_\m r) - H_\l( R(p),  R(q)) \<_{\l+\m} \b_M (r)                                                               \\
	& = R(\a_M (p))_\l H_\m(R(q), R(r))- H_{\l+\m}(R(R(p)_\l q + p _\l  R(q) + H_\l(R(p), R(q))), R(\b_M(r)))                                                \\
	& +H_\l(R(\a_M (p)), R(R(q)_\m r + q _\m R(r) + H_\m(R(q), R(r)))) - H_\l(R(p), R(q))_{\l+\m} R(\b_M (r))                                               \\
	& \overset{(\ref{5.4})}{ = }R(\a_M (p))_\l H_\m(R(q), R(r))- H_{\l+\m}( R(p)_\l R(q), R(\b_M (r)))+H_\l( R(\a_M (p)),  R(q)_\m R(r)) - H_\l( R(p),  R(q))_{\l+\m} R(\b_M(r))   \\
	& \overset{(\ref{5.3})}{ = }\a_A(R(p))_\l H_\m(R(q), R(r)) - H_{\l+\m}( R(p)_\l R(q), \b_A( R(r)))
	+H_{\l}(\a_A(R(p)), R(q)_\m R(r)) - H_{\l}(R(p), R(q))_{\l+\m}\b_A(R(r)) \\
	& \overset{(\ref{5.2})}{ = } 0.
	\end{aligned}
	\end{equation*}This completes the proof. \end{proof} 
\begin{lem}With reference to Proposition \ref{prop3.2}, assume that $(A,\<_\l, \>_\l, \vee_\l, \a,\b)$ be a BiHom-NS-conformal algebra, $A_{bhas}$ be a BiHom-associative conformal algebra  and  $(A,\<_\l,\>_\l, \a,\b)$ be a conformal $A_{bhas}$-bimodule. Define a bilinear map $H_\l: A \otimes A \to  A[\l],$ by $ H_\l(p, q) = p\vee_\l q,$ for all $p, q\in A$. Then $H$ is a Hochschild $2$-cocycle on $A_{bhas}$ with values in $(A,\<_\l,\>_\l, \a,\b)$.\end{lem}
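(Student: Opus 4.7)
The plan is to verify directly the two requirements that $H_\l$ must satisfy in order to be a Hochschild $2$-cocycle on $A_{bhas}$ with values in the conformal bimodule $(A, \<_\l, \>_\l, \a, \b)$, namely the $(\a, \b)$-multiplicativity condition (\ref{5.1}) and the cocycle identity (\ref{5.2}). Since $H_\l(p,q)$ is defined to equal $p \vee_\l q$, both conditions should translate into axioms already imposed on the BiHom-NS-conformal algebra $A$.

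First, I would check the conformal sesqui-linearity of $H_\l$, which is immediate from the conformal sesqui-linearity of $\vee_\l$, and then check (\ref{5.1}). Because the bimodule structure maps in $(A,\<_\l,\>_\l,\a,\b)$ are literally $\a$ and $\b$, the condition $H_\l \circ (\a \otimes \a) = \a \circ H_\l$ unwinds to $\a(p) \vee_\l \a(q) = \a(p \vee_\l q)$, which is the third identity in (\ref{BN2.2}); analogously, the $\b$-equivariance reduces to the third identity in (\ref{BN2.3}).

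The main step is (\ref{5.2}). Here I would invoke Proposition \ref{prop3.2}, by which the bimodule actions of $A_{bhas}$ on $(A, \<_\l, \>_\l, \a, \b)$ are given by $p ._\l m = p \>_\l m$ and $m ._\l p = m \<_\l p$, while the multiplication inside $H$'s arguments is $*_\l = \<_\l + \>_\l + \vee_\l$. Substituting these into (\ref{5.2}) with $H_\l(p,q) = p \vee_\l q$ yields
\begin{equation*}
\a(p) \>_\l (q \vee_\m r) - (p *_\l q) \vee_{\l+\m} \b(r) + \a(p) \vee_\l (q *_\m r) - (p \vee_\l q) \<_{\l+\m} \b(r) = 0,
\end{equation*}
which is exactly a rearrangement of axiom (\ref{BN2.7}).

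The main obstacle is not computational but notational: one must keep straight which operation plays the role of the algebra multiplication (namely $*_\l$) and which plays the role of the left/right bimodule action (namely $\>_\l$ and $\<_\l$), so that the Hochschild cocycle identity is parsed correctly. Once this identification is made, the proof is a one-line reading of (\ref{BN2.7}), and no further manipulation of the BiHom-NS axioms is required.
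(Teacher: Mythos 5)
Your proposal is correct and coincides with the paper's own argument: condition (\ref{5.1}) is read off from the $\a$- and $\b$-multiplicativity of $\vee_\l$ in (\ref{BN2.2})--(\ref{BN2.3}), and substituting the bimodule actions $p\>_\l m$, $m\<_\l p$ and the product $*_\l$ into the cocycle identity (\ref{5.2}) shows it is precisely a rearrangement of axiom (\ref{BN2.7}). No substantive difference from the paper's proof.
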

\begin{proof} The Eq.(\ref{5.1}) for $H_\l$ follows immediately by the multiplicativity of $\a$ and $\b$ with respect to the operation $\vee_\l$. We need to check the cocycle condition in Eq. (\ref{5.2}), as follows:
	\begin{equation*}\begin{aligned}
	&\a(p) \>_\l H_\m(q, r) - H_{\l+\m}(p *_\l q, \b(r)) + H_\l(\a(p), q *_\m r - H_\l(p, q)\<_{\l+\m} \b(r)
	\\&= \a(p) \>_\l (q \vee_\m r)-(p *_\l q) \vee_{\l+\m} \b(r) + \a(p)\vee_{\l}(q *_\m r) - (p \vee_\l q)\<_{\l+\m} \b(r)\\&\overset{(\ref{BN2.7})}{=} 0.
	\end{aligned}\end{equation*}
	 It completes the proof.\end{proof}

\section{ BiHom-quadri conformal algebra} In this section, we deal with the BiHom-verson of quadri conformal algebras. That is indeed defined  for the first time here. We can get the corresponding results by modifying structural maps (for the Hom-quadri conformal algebra, we just need to consider $\a= \b$, where the case of quadri conformal algebras can be obtained by considering $\a= \b= id$, given that $\a$ and $\b$ are structural maps). Moreover we discuss relation betweenn quadri conformal algebra and BiHom-(tri)dendriform conformal algebras.
\begin{defn}\label{BHQCA}
	A BiHom-quadri-conformal algebra is a $7$-tuple $(Q,\nwarrow_\l, \swarrow_\l,\nearrow_\l,\searrow_\l,\a,\b)$ consisting of a $\mathbb{C}[\p]$-module $Q$, conformal bilinear maps $\nwarrow_\l, \swarrow_\l,\nearrow_\l,\searrow_\l: Q\otimes Q \to Q[\l]$ and linear commuting maps $\a,\b: Q \to Q$ satisfying conformal sesqui-linearity and the following axioms
	\begin{align}
	\a(p\searrow_\l q)=\a(p)\searrow_\l \a(q),\quad\a(p\swarrow_\l q)=\a(p)\swarrow_\l \a(q),\quad \a(p\nwarrow_\l q)=\a(p)\nwarrow_\l \a(q),\quad \a(p\nearrow_\l q)=\a(p)\nearrow_\l \a(q)\quad\quad\label{eq43}
	&\\\b(p\swarrow_\l q)=\b(p)\swarrow_\l \b(q),\quad \b(p\nwarrow_\l q)=\b(p)\nwarrow_\l \b(q),\quad \b(p\searrow_\l q)=\b(p)\searrow_\l \b(q),\quad
	\b(p\nearrow_\l q)=\b(p)\nearrow_\l \b(q)\quad\quad\label{eq44}&\\
	 (p \nwarrow_\l q) \nwarrow_{\l+\m} \b(r) = \a(p) \nwarrow_\l (q *_{\m} r),\quad(p \nearrow_\l q) \nwarrow_{\l+\m} \b(r) = \a(p) \nearrow_\l (q \<_{\m} r),\quad (p \wedge_\l q) \nearrow_{\l+\m} \b(r) = \a(p) \nearrow_\l (q \>_{\m} r)\quad\quad\label{eq45}&\\
	(p \swarrow_\l q)\nwarrow_{\l+\m} \b(r) = \a(p) \swarrow_\l (q \wedge_\m r),
\quad(p \vee_\l q) \nearrow_{\l+\m} \b(r) = \a(p) \searrow_\l (q \nearrow_\m r),
\quad(p \searrow_\l q) \nwarrow_{\l+\m} \b(r) = \a(p) \searrow_{\l} (q\nwarrow_\m r)\quad\quad\label{eq46}& \\
	(p \<_\l q)\swarrow_{\l+\m} \b(r) = \a(p) \swarrow_\l (q \vee_{\m} r),
\quad(p *_\l q) \searrow_{\l+\m} \b(r) = \a(p) \searrow_\l (q \searrow_\m r),
\quad(p \>_\l q) \swarrow_{\l+\m} \b(r) = \a(p) \searrow_\l(q \swarrow_{\m} r)\quad\quad\label{eq47}&
	\end{align}for all $\l,\m\in \mathbb{C}$ and $p, q, r\in Q$. Moreover, we have the following operations:
	\begin{eqnarray} & p\vee_\l q := p \swarrow_\l q + p \searrow_\l q, \label{eq50}\\
	&p\wedge_\l q := p \nwarrow_\l q + p \nearrow_\l q,\label{eq51}\\
	& p\<_\l q:= p \searrow_\l q + p \nearrow_\l q \label{eq48}\\
	&p \>_\l q := p \nwarrow_\l q + p \swarrow_\l q, \label{eq49}\\ 
	&p *_\l q : = p \swarrow_\l q + p \nwarrow_\l q+ p \searrow_\l q+ p \nearrow_\l q= p\vee_\l q+p\wedge_\l q= p \>_\l q+p\<_\l q.\label{eq52}
	\end{eqnarray} \end{defn}
A morphism of BiHom-quadri-conformal algebras $f : (Q,\nwarrow_\l, \swarrow_\l,\nearrow_\l,\searrow_\l,\a,\b) \to(Q', \nwarrow'_\l, \swarrow'_\l, \nearrow'_\l, \searrow'_\l, \a', \b')$ is a $\mathbb{C}$-linear map $f : Q \to Q'$ that satisfies $f(p \swarrow_\l q)= f(p) \swarrow'_\l f(q), f(p \searrow _\l q)= f(p) \searrow'_\l f(q), f(p \nwarrow_\l q) = f(p) \nwarrow'_\l f(q)$ and $f(p\nearrow_\l q) = f(p)\nwarrow'_\l f(q)$, for all $p, q\in Q,$ as well as $f\circ \a = \a'\circ  f $ and  $f \circ \b = \b'\circ f.$
\begin{prop}Let $(Q,\nwarrow_\l, \swarrow_\l, \nearrow_\l, \searrow_\l)$ be a quadri-conformal algebra and $\a,\b: Q\to Q$ are two commuting quadri-conformal algebra endomorphisms. Define $\mathbb{C}$-bilinear maps $\nwarrow_{\l}^{(\a,\b)}, \swarrow_{\l}^{(\a,\b)}, \nearrow_{\l}^{(\a,\b)}, \searrow_{\l}^{(\a,\b)}: Q\otimes Q \to Q[\l]$ by $p \nwarrow_{(\a,\b)}^{\l} q= \a(p)\nwarrow_\l \b(q), p\searrow_{\l}^{(\a,\b)} q = \a(p) \searrow_\l \b(q), p \swarrow_{\l}^{(\a,\b)} q = \a(p) \swarrow_\l \b(q), p \nearrow _{\l}^{(\a,\b)} q= \a(p)\nearrow_\l \b(q),$ for all $p, q \in Q.$ Then $Q_{(\a,\b)}:=(Q,\nwarrow _{\l}^{(\a,\b)} , \swarrow_{\l}^{(\a,\b)}, \nearrow _{\l}^{(\a,\b)}, \searrow _{\l}^{(\a,\b)}, \a, \b)$ is a BiHom-quadri conformal algebra, called the Yau twist of $Q$. Moreover, assume that $(Q',\nwarrow_\l', \swarrow_\l',\nearrow_\l',\searrow_\l')$ is another quadri-conformal algebra and $\a',\b':Q\to Q$ are two commuting quadri-conformal algebra endomorphisms and $f: Q \to Q'$ is a morphism of quadri-conformal algebras satisfying $f \circ \a = \a'\circ f$ and $f\circ \b=\b'\circ f$. Then $f : Q_{\a,\b}\to Q'_{\a',\b'} $ is a morphism of BiHom-quadri conformal algebras.\end{prop}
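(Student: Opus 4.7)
The plan is to follow the Yau-twist recipe already used in Proposition \ref{prop3.5}: rewrite each axiom of Definition \ref{BHQCA} for the twisted operations in terms of the original ones, push the maps $\a$ and $\b$ inside using the fact that $\a, \b$ are quadri-conformal endomorphisms (hence multiplicative with respect to each of $\nwarrow_\l, \swarrow_\l, \nearrow_\l, \searrow_\l$) and commute with each other, and then invoke the corresponding untwisted quadri-conformal axiom on $Q$. Conformal sesqui-linearity of the four twisted operations is immediate from $\a\p = \p\a$, $\b\p = \p\b$ and the sesqui-linearity of the originals, while the BiHom-multiplicativity axioms (\ref{eq43}) and (\ref{eq44}) follow from $\a\b = \b\a$ together with the multiplicativity of $\a, \b$ on $Q$, as in
\[
\a(p \nwarrow_\l^{(\a,\b)} q) = \a(\a(p) \nwarrow_\l \b(q)) = \a^2(p) \nwarrow_\l \a\b(q) = \a(p) \nwarrow_\l^{(\a,\b)} \a(q).
\]

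A key preliminary observation is that the derived operations $\vee_\l, \wedge_\l, \<_\l, \>_\l, *_\l$ associated to $Q_{(\a,\b)}$ via (\ref{eq50})--(\ref{eq52}) coincide with the Yau twists of the corresponding operations on $Q$. Indeed,
\[
p \vee_\l^{(\a,\b)} q = \a(p) \swarrow_\l \b(q) + \a(p) \searrow_\l \b(q) = \a(p) \vee_\l \b(q),
\]
and analogously $p \wedge_\l^{(\a,\b)} q = \a(p) \wedge_\l \b(q)$, $p \<_\l^{(\a,\b)} q = \a(p) \<_\l \b(q)$, $p \>_\l^{(\a,\b)} q = \a(p) \>_\l \b(q)$, $p *_\l^{(\a,\b)} q = \a(p) *_\l \b(q)$. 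Consequently, applying $\b$ to a compound expression like $q *_\m^{(\a,\b)} r$ transports through the twist cleanly, since $\b(q *_\m^{(\a,\b)} r) = \b(\a(q) *_\m \b(r)) = \a\b(q) *_\m \b^2(r)$.

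With this identification in hand, each of the nine axioms in (\ref{eq45})--(\ref{eq47}) is verified by a model computation of the form
\begin{align*}
(p \nwarrow_\l^{(\a,\b)} q) \nwarrow_{\l+\m}^{(\a,\b)} \b(r) &= \a(\a(p) \nwarrow_\l \b(q)) \nwarrow_{\l+\m} \b^2(r) \\
&= (\a^2(p) \nwarrow_\l \a\b(q)) \nwarrow_{\l+\m} \b^2(r) \\
&= \a^2(p) \nwarrow_\l (\a\b(q) *_\m \b^2(r)) \\
&= \a(p) \nwarrow_\l^{(\a,\b)} (q *_\m^{(\a,\b)} r),
\end{align*}
where the third equality is the first axiom of (\ref{eq45}) on $Q$. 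The remaining eight identities are established by an identical pattern, each time invoking the matching untwisted axiom from (\ref{eq45})--(\ref{eq47}).

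For the morphism statement, the conditions $f\circ\a = \a'\circ f$ and $f\circ\b = \b'\circ f$ are transferred verbatim from the hypothesis, and compatibility with each of the four twisted operations follows from a short computation, e.g.
\[
f(p \nwarrow_\l^{(\a,\b)} q) = f(\a(p) \nwarrow_\l \b(q)) = f(\a(p)) \nwarrow'_\l f(\b(q)) = \a'(f(p)) \nwarrow'_\l \b'(f(q)),
\]
which is by definition the $(\a',\b')$-twist of $\nwarrow'$ applied to $f(p), f(q)$; the argument for $\swarrow, \nearrow, \searrow$ is identical. The main obstacle is purely bookkeeping: four primitive operations and five derived ones must be tracked through nine axioms. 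Once the identification that the derived twisted operation equals the twist of the derived operation is recorded, the verifications become essentially mechanical repetitions of the model displayed above, requiring no further ideas.
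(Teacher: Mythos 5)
Your proposal is correct and follows essentially the same route as the paper: record that the twisted derived operations $\vee,\wedge,\<,\>,*$ coincide with the Yau twists of the originals, then verify the axioms by the model computation on $\a^2(p),\a\b(q),\b^2(r)$, which is exactly the paper's argument. Your treatment is in fact slightly more complete, since you also write out the BiHom-multiplicativity check and the morphism verification, which the paper leaves to the reader.
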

\begin{proof}We only prove Eq. (\ref{eq45}) and other conditions can be prove similarly. We define\begin{eqnarray*}
	p \<_{\l}^{(\a,\b)} q  := p \nwarrow_{\l}^{(\a,\b)} q + p \swarrow_{\l}^{(\a,\b)} q,\\ p \>_{\l}^{(\a,\b)} q:= p \nearrow_{\l}^{(\a,\b)} q + p\searrow_{\l}^{(\a,\b)} q,\\ p \vee _{(\a,\b)\l} q := p \searrow_{\l}^{(\a,\b)} q + p \swarrow_{\l}^{(\a,\b)} q,\\ p \wedge_{\l}^{(\a,\b)} q := p \nearrow_{\l}^{(\a,\b)} q + p \nwarrow_{\l}^{(\a,\b)} q,\\
	p*_{\l}^{(\a,\b)} q := p \nwarrow_{\l}^{(\a,\b)} q + p\swarrow_{\l}^{(\a,\b)} q + p \searrow_{\l}^{(\a,\b)} q + p \nearrow_{\l}^{(\a,\b)} q.
	\end{eqnarray*} for all $p, q \in Q$.  It is easy to get \begin{eqnarray*}
	p \>_{\l}^{(\a,\b)} q = \a(p) \>_{\l} \b(q), p \<_{\l}^{(\a,\b)} q = \a(p)\<_\l \b(q), \\p \vee_{\l}^{(\a,\b)} q = \a(p) \vee_\l \b(q), p \wedge_{\l}^{(\a,\b)} q = \a(p) \wedge_\l \b(q),\\ p *_{\l}^{(\a,\b)}q =\a(p) *_\l \b(q)
	\end{eqnarray*} for all $p, q\in  Q$. By using the fact that $\a$ and $\b$ are two commuting endomorphisms, one can compute, for all $p,q, r \in  Q:$ \begin{equation*}
	\begin{aligned}
	(p \nwarrow_{\l}^{(\a,\b)} q)\nwarrow_{\m+\l}^{(\a,\b)} \b(r)&= \a(\a(p) \nwarrow_{\l} \b(q))\nwarrow_{\m+\l} \b^2(r)= (\a^2 (p) \nwarrow_\l \a\b(q)) \nwarrow_{\m+\l} \b^2(r)\\&=\a^2 (p) \nwarrow_\l (\a\b(q) *_{\m} \b^2(r))=\a (p) \nwarrow_{\l}^{(\a,\b)} (q *_{\m}^{(\a,\b)} r)
	\end{aligned}
	\end{equation*}\begin{equation*}
	\begin{aligned}
	(p \nearrow_{\l}^{(\a,\b)} q)\nwarrow_{\m+\l}^{(\a,\b)}\b(r)&=(\a^2(p) \nearrow_\l \a\b(q)) \nwarrow_{\m+\l} \b^2(r)\\&=\a^2(p)\nearrow_\l (\a\b(q)\<_\m \b^2(r))\\&=\a(p) \nearrow_{\l}^{(\a,\b)} (q \<_{\m}^{(\a,\b)} r).
	\end{aligned}
	\end{equation*}Thus, Eq. (\ref{eq45}) verified by considering the elements $\a^2(p), \a\b(q)$ and $ \b^2(r)$ for quadri-conformal algebra case. \end{proof}

\begin{rem}More generally, let $(Q, \nwarrow_\l, \swarrow_\l, \nearrow_\l, \searrow_\l, \a, \b)$ be a BiHom-quadri conformal algebra and $\tilde{\a} ,\tilde{\b}: Q \to Q$ two morphisms of BiHom-quadri conformal algebras such that any two of the maps $\a,\b,\tilde{\a},\tilde{\b}$ commute. Define new multiplications on $Q$ by $p \swarrow'_\l q = \tilde{\a}(p) \swarrow_\l \tilde{\b}(q)$, $p \searrow'_\l q= \tilde{\a}(p) \searrow_\l \tilde{\b}(q),$ $p \nwarrow'_\l q= \tilde{\a}(p) \nwarrow_\l \tilde{\b}(q)$ and $p \nearrow'_\l q = \tilde{\a}(p)_\l \nearrow \tilde{\b}(q)$. Then $(Q,\nwarrow'_\l, \swarrow'_\l, \nearrow'_\l, \searrow'_\l, \a\circ\tilde{\a}, \b\circ \tilde{\b})$ is a BiHom-quadri conformal algebra.\end{rem}
\begin{rem}Let $(Q, \nwarrow_\l, \swarrow_\l, \nearrow_\l, \searrow_\l, \a, \b)$ be a BiHom-quadri-conformal algebra. By considering Eqs. (\ref{eq45})-(\ref{eq47}) and by column sum we obtain, for all $p, q, r \in Q$
	\begin{eqnarray}
	(p \<_\l q) \<_{\l+\m} \b(r) = \a(p) \<_\l (q *_\m r), \\(p \>_\l q) \<_{\l+\m} \b(r) = \a(p) \>_{\l} (q \<_{\m} r),\\ (p*_\l q) \>_{\l+\m} \b(r) = \a(p) \>_\l (q \>_\m r).
	\end{eqnarray}Thus, $(Q, \>_\l,\<_\l, \a, \b)$ is a BiHom-dendriform conformal algebra. By analogy with \cite{1}, we denote it by $Q_h$ and call it the horizontal BiHom-dendriform conformal algebra associated to $Q$.\end{rem}
Also by the row sum of Eqs.(\ref{eq45})-(\ref{eq47}) we obtain, for all $p, q, r \in Q:$ \begin{eqnarray}
(p \wedge_\l q) \wedge_{\l+\m}\b(r) = \a(p) \wedge_\l (q *_{\m} r),\\ (p\vee_\l q) \wedge_{\l+\m} \b(r) = \a(p) \vee_\l (q \wedge_{\l+\m} r),\\ (p *_\l q) \vee_{\l+\m} \b(r) = \a(p) \vee_\l (q \vee_{\m} r).
\end{eqnarray}Thus, $(Q, \wedge_\l,\vee_\l, \a, \b)$ is a BiHom-dendriform conformal algebra, which, again by analogy with \cite{1}, is denoted by $Q_v$ and is called the vertical BiHom-dendriform conformal algebra associated to $Q.$ From Corollary \ref{corbihomdend} we immediately obtain:
\begin{cor}
	Let $(Q, \nwarrow_\l, \swarrow_\l, \nearrow_\l, \searrow_\l, \a, \b)$ be a BiHom-quadri conformal algebra. Then $(Q, *_\l,\a,\b)$ is a BiHom-associative conformal algebra, where $p *_\l q = p \nwarrow_\l q+ p\swarrow_\l q+ p\nearrow_\l q +p\searrow_\l q$ for all $p, q\in Q$.
\end{cor}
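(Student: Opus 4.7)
The plan is to deduce this corollary directly from the column-sum (or row-sum) consequences of Definition \ref{BHQCA} together with Corollary \ref{corbihomdend}, so that essentially no fresh computation is required. The role of the argument is really just bookkeeping: to identify the fourfold sum $p *_\l q$ with the associative multiplication arising from one of the two induced BiHom-dendriform conformal structures on $Q$.

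First, I would invoke the horizontal BiHom-dendriform conformal algebra $Q_h = (Q, \<_\l, \>_\l, \a, \b)$ exhibited in the remark immediately preceding this corollary, whose defining axioms were obtained precisely by summing the three columns of equations (\ref{eq45})--(\ref{eq47}). Applying Corollary \ref{corbihomdend} to $Q_h$, the operation
\begin{equation*}
p \star_\l q := p \<_\l q + p \>_\l q
\end{equation*}
turns $(Q, \star_\l, \a, \b)$ into a BiHom-associative conformal algebra. The BiHom-multiplicativity requirements on $\a$ and $\b$ needed for Corollary \ref{corbihomdend} to apply are already absorbed into the hypotheses (\ref{eq43})--(\ref{eq44}), so nothing extra is needed here.

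Next, I would show that $\star_\l$ literally coincides with the operation $*_\l$ in the statement. Using the definitions (\ref{eq48}) and (\ref{eq49}), one has
\begin{equation*}
p \star_\l q = (p \searrow_\l q + p \nearrow_\l q) + (p \nwarrow_\l q + p \swarrow_\l q) = p *_\l q,
\end{equation*}
which is exactly (\ref{eq52}). This identification closes the proof. I do not anticipate any real obstacle; as a sanity check, one may run the same argument through the vertical BiHom-dendriform algebra $Q_v = (Q, \wedge_\l, \vee_\l, \a, \b)$ arising from the row-sum observation, since by (\ref{eq50})--(\ref{eq52}) the associative operation $p \wedge_\l q + p \vee_\l q$ again recovers $p *_\l q$, giving a second, equivalent route to the same conclusion.
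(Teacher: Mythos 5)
Your proposal is correct and follows essentially the same route as the paper: the paper also derives this corollary by observing that the column sums of Eqs. (\ref{eq45})--(\ref{eq47}) make $(Q,\<_\l,\>_\l,\a,\b)$ a horizontal BiHom-dendriform conformal algebra and then applying Corollary \ref{corbihomdend}, with $\<_\l+\>_\l$ equal to $*_\l$ by Eqs. (\ref{eq48})--(\ref{eq52}). Your remark about the vertical algebra $Q_v$ giving a second equivalent route is a nice consistency check but adds nothing beyond the paper's argument.
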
As seen below, the tensor product of two BiHom-dendriform conformal algebras naturally yields a BiHom-quadri conformal algebra.
\begin{prop}
	Consider two BiHom-dendriform conformal algebras $(A, \<_\l, \>_\l, \a, \b)$ and $(B, \<'_\l, \>'_\l, \a', \b')$. Define bilinear operations on the tensor product $ A \otimes B$ (for all $p_1, p_2 \in A$ and $q_1, q_2 \in B$) by:
	\begin{align*}
	&(p_1 \otimes q_1) \nwarrow_\l (p_2 \otimes q_2) = (p_1 \<_\l p_2) \otimes (q_1 \<'_\l q_2),\\
	&(p_1 \otimes q_1) \swarrow_\l (p_2 \otimes q_2) = (p_1 \<_\l p_2) \otimes (q_1 \>'_\l q_2),\\
	&(p_1 \otimes q_1) \nearrow_\l (p_2 \otimes q_2) = (p_1 \>_\l p_2) \otimes (q_1 \<'_\l q_2),\\
	&(p_1 \otimes q_1) \searrow_\l (p_2 \otimes q_2) = (p_1 \>_\l p_2). \otimes (q_1 \>'_\l q_2)
	\end{align*}
	Then $(A\otimes B , \nwarrow_\l, \swarrow_\l, \nearrow_\l, \searrow_\l, \a\otimes \a', \b\otimes\b')$ is a BiHom-quadri  conformal algebra.
\end{prop}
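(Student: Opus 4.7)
The plan is to verify the axioms of Definition~\ref{BHQCA} directly on $A\otimes B$ by exploiting that each of the four operations $\nwarrow_\l,\swarrow_\l,\nearrow_\l,\searrow_\l$ factors as the tensor product of one of $\<_\l,\>_\l$ on $A$ with one of $\<'_\l,\>'_\l$ on $B$. First I would dispose of the conformal sesqui-linear identities and the multiplicativity conditions~(\ref{eq43})--(\ref{eq44}) for $\a\otimes\a'$ and $\b\otimes\b'$. These follow immediately from the corresponding properties of $\<_\l,\>_\l$ in $A$ and $\<'_\l,\>'_\l$ in $B$; for example, $(\a\otimes\a')((p_1\otimes q_1)\nwarrow_\l(p_2\otimes q_2)) = \a(p_1\<_\l p_2)\otimes\a'(q_1\<'_\l q_2)$, which by~(\ref{D1}) on each factor equals $(\a(p_1)\otimes\a'(q_1))\nwarrow_\l(\a(p_2)\otimes\a'(q_2))$.

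To render the nine axioms~(\ref{eq45})--(\ref{eq47}) tractable, I would next compute explicit formulas for the derived operations $\<_\l,\>_\l,\vee_\l,\wedge_\l,*_\l$ on $A\otimes B$. A direct calculation from~(\ref{eq48})--(\ref{eq52}) and the definition of the four basic products yields the clean factorisations
\begin{align*}
(p_1\otimes q_1)\<_\l(p_2\otimes q_2)&=(p_1\>_\l p_2)\otimes(q_1 *'_\l q_2),\\
(p_1\otimes q_1)\>_\l(p_2\otimes q_2)&=(p_1\<_\l p_2)\otimes(q_1 *'_\l q_2),\\
(p_1\otimes q_1)\vee_\l(p_2\otimes q_2)&=(p_1 *_\l p_2)\otimes(q_1\>'_\l q_2),\\
(p_1\otimes q_1)\wedge_\l(p_2\otimes q_2)&=(p_1 *_\l p_2)\otimes(q_1\<'_\l q_2),\\
(p_1\otimes q_1) *_\l(p_2\otimes q_2)&=(p_1 *_\l p_2)\otimes(q_1 *'_\l q_2),
\end{align*}
where $*_\l$ and $*'_\l$ denote the BiHom-associative multiplications supplied by Corollary~\ref{corbihomdend} on $A$ and $B$ respectively.

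With these formulas in hand, each of the nine axioms reduces to the tensor product of two BiHom-dendriform axioms, one applied in each factor. To illustrate, for $(p\swarrow_\l q)\nwarrow_{\l+\m}(\b\otimes\b')(r)=(\a\otimes\a')(p)\swarrow_\l(q\wedge_\m r)$ with $p=p_1\otimes q_1$, $q=p_2\otimes q_2$ and $r=p_3\otimes q_3$, the left-hand side expands to $((p_1\<_\l p_2)\<_{\l+\m}\b(p_3))\otimes((q_1\>'_\l q_2)\<'_{\l+\m}\b'(q_3))$ while the right-hand side expands to $(\a(p_1)\<_\l(p_2 *_\m p_3))\otimes(\a'(q_1)\>'_\l(q_2\<'_\m q_3))$; the two agree by~(\ref{D3}) on the $A$-factor and~(\ref{D4}) on the $B$-factor. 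The remaining eight axioms are handled in the same way, each one corresponding to a specific pair consisting of one of the three BiHom-dendriform axioms~(\ref{D3})--(\ref{D5}) on $A$ and one on $B$.

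The main obstacle—indeed the only one—is the bookkeeping: one must match each of the nine quadri-conformal identities to the correct pair of BiHom-dendriform axioms and keep track of which of $\{\<,\>,*\}$ the formulas above produce in each factor. No delicate analysis of the conformal parameters is required, since $\l$ and $\m$ are inherited unchanged from the two tensor factors, and the commutativity of $\a\otimes\a'$ with $\b\otimes\b'$ is automatic from the commutativity of $\a,\b$ on $A$ and of $\a',\b'$ on $B$.
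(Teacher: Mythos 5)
Your overall strategy---compute the five derived operations on $A\otimes B$ and then match each of the nine axioms of Definition \ref{BHQCA} with a pair of BiHom-dendriform identities, one in each tensor factor---is exactly the strategy of the paper's proof, but your table of derived operations is wrong in the two rows that matter, and that is precisely where the ``bookkeeping'' you dismiss breaks down. You took Eqs. (\ref{eq48})--(\ref{eq49}) literally ($\<_\l=\searrow_\l+\nearrow_\l$, $\>_\l=\nwarrow_\l+\swarrow_\l$), which yields your formulas $(p_1\otimes q_1)\<_\l(p_2\otimes q_2)=(p_1\>_\l p_2)\otimes(q_1*'_\l q_2)$ and $(p_1\otimes q_1)\>_\l(p_2\otimes q_2)=(p_1\<_\l p_2)\otimes(q_1*'_\l q_2)$. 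However, the convention actually in force in the axioms (\ref{eq45})--(\ref{eq47})---as the column-sum remark following Definition \ref{BHQCA}, the proof of Proposition \ref{prop6.7}, and the paper's own proof of this proposition all show---is the Aguiar--Loday one, $\<_\l=\nwarrow_\l+\swarrow_\l$ and $\>_\l=\nearrow_\l+\searrow_\l$ (the printed (\ref{eq48})--(\ref{eq49}) have the two labels interchanged), so the correct formulas are $(p_1\otimes q_1)\<_\l(p_2\otimes q_2)=(p_1\<_\l p_2)\otimes(q_1*'_\l q_2)$ and $(p_1\otimes q_1)\>_\l(p_2\otimes q_2)=(p_1\>_\l p_2)\otimes(q_1*'_\l q_2)$, which is what the paper uses.

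With your swapped table the reduction genuinely fails for the four axioms that involve $\<_\l$ or $\>_\l$. Take the second identity in (\ref{eq45}), $(p\nearrow_\l q)\nwarrow_{\l+\m}(\b\otimes\b')(r)=(\a\otimes\a')(p)\nearrow_\l(q\<_\m r)$: the $A$-factor of the left-hand side is $(p_1\>_\l p_2)\<_{\l+\m}\b(p_3)=\a(p_1)\>_\l(p_2\<_\m p_3)$ by (\ref{D4}), while your formula for $\<_\l$ on $A\otimes B$ makes the $A$-factor of the right-hand side $\a(p_1)\>_\l(p_2\>_\m p_3)$, and these are not equal in a general BiHom-dendriform conformal algebra; the same mismatch occurs in the third identity of (\ref{eq45}) and in the first and third identities of (\ref{eq47}). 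The single identity you verify explicitly (the first of (\ref{eq46})) involves only $\wedge_\l$, which is insensitive to the swap, so it could not reveal the problem. Once your two formulas are replaced by the correct ones, each of the nine axioms does factor as one of (\ref{D3})--(\ref{D5}) in each tensor factor and your argument becomes the paper's proof; as written, however, the blanket claim that the remaining eight axioms ``are handled in the same way'' is false, so the proof has a genuine gap.
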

\begin{proof}The relations Eq.(\ref{eq43}) and Eq. (\ref{eq44}) are obvious. We denote by $\>_{\l}^{\otimes}, \<_{\l}^{\otimes},\vee_{\l}^{\otimes},\wedge_{\l}^{\otimes},*_{\l}^{\otimes}$ the operations defined on $A \otimes B$ by Eqs. (\ref{eq48})-(\ref{eq52}) corresponding to the operations $\swarrow_\l,\nwarrow_\l, \searrow_\l, \nearrow_\l$ defined above.
	One can easily see that, for all $p_1, p_2 \in A$ and $q_1, q_2 \in B$, we have
	\begin{align*}
	&(p_1 \otimes q_1) \<_{\l}^{\otimes} (p_2\otimes q_2) = (p_1 \<_\l p_2) \otimes (q_1 *' q_2),\\&
	(p_1 \otimes q_1) \>_{\l}^{\otimes} (p_2 \otimes q_2) = (p_1 \>_\l p_2) \otimes (q_1 *'_\l q_2),\\&
	(p_1 \otimes q_1) \wedge_{\l}^{\otimes}(p_2 \otimes q_2) = (p_1 *_\l p_2) \otimes (q_1 \<'_\l q_2),\\&
	(p_1 \otimes q_1) \vee_{\l}^{\otimes}(p_2 \otimes q_2) = (p_1 *_\l p_2) \otimes (q_1 \>'_\l q_2),\\&(p_1 \otimes q_1)*_{\l}^{\otimes}(p_2 \otimes q_2) = (p_1 *_\l p_2) \otimes (q_1 *'_\l q_2),\end{align*}
	Now we prove Eq.(\ref{eq45}) and leave the rest to the reader:
	\begin{equation*}
	\begin{aligned}
	((p_1 \otimes q_1)\nwarrow_\l  (p_2 \otimes q_2))\nwarrow_{\l+\m} (\b \otimes \b')(p_3 \otimes q_3)& = ((p_1\<_\l p_2) \otimes (q_1 \<'_\l q_2)) \nwarrow_{\l+\m} (\b(p_3) \otimes \b'(q_3))\\& = ((p_1 \<_\l p_2) \<_{\l+\m}\b(p_3)) \otimes((q_1 \<'_\l q_2) \<'_{\l+\m} \b' (q_3))\\&
	= (\a(p_1) \<_{\l}(p_2 *_\m p_3)) \otimes (\a'(q_1)\<'_\l(q_2 *'_{\m} q_3))\\&
	=(\a(p_1)\otimes\a'(q_1))\nwarrow_{\l}((p_2 *_\m p_3)\otimes (q_2 *'_{\m} q_3))\\&=(\a\otimes\a')(p_1\otimes q_1)\nwarrow_{\l}((p_2 \otimes q_2)*_{\m}^{\otimes} (p_3 \otimes q_3)).
	\end{aligned}
	\end{equation*}
	\begin{equation*}
	\begin{aligned}
((p_1 \otimes q_1)\nearrow_\l  (p_2 \otimes q_2))\nwarrow_{\l+\m} (\b \otimes \b')(p_3 \otimes q_3)& = ((p_1\>_\l p_2) \otimes (q_1 \<'_\l q_2)) \nearrow_{\l+\m} (\b(p_3) \otimes \b'(q_3))\\&
	= ((p_1 \>_\l p_2) \<_{\l+\m}\b(p_3)) \otimes((q_1 \<'_\l q_2) \<'_{\l+\m} \b' (q_3))\\&
	= (\a(p_1) \>_{\l}(p_2 \<_\m p_3)) \otimes (\a'(q_1)\<'_\l(q_2 *'_{\m} q_3))\\&
	=(\a(p_1)\otimes\a'(q_1))\nearrow_{\l}((p_2 \<_\m p_3)\otimes (q_2 *'_{\m} q_3))\\&=(\a\otimes\a')(p_1\otimes q_1)\nearrow_{\l}((p_2 \otimes q_2)\<_{\m}^{\otimes} (p_3 \otimes q_3)).
	\end{aligned}
	\end{equation*}
	\begin{equation*}
	\begin{aligned}
	((p_1\otimes q_1) \wedge_\l (p_2\otimes q_2)) \nearrow_{\l+\m} (\b\otimes\b')(p_3\otimes q_3)& = ((p_1*_\l p_2) \otimes (q_1\<'_\l q_2)) \nearrow_{\l+\m} (\b(p_3)\otimes\b'(q_3))\\&=((p_1*_\l p_2)\>_{\l+\m} \b(p_3))\otimes ((q_1\<'_\l q_2)\<'_{\l+\m} \b'(q_3))\\&=(\a(p_1) \>_\l (p_2 \>_{\m} p_3))\otimes (\a'(q_1) \<'_\l (q_2 *'_{\m} q_3))\\&=(\a(p_1) \otimes\a'(q_1))\nearrow_\l (p_2 \>_{\m} p_3)\otimes (q_2 *'_{\m} q_3)\\&=(\a\otimes\a')(p_1\otimes q_1)\nearrow_\l ((p_2 \otimes q_2) \>_{\l}^{\otimes} (p_3 \otimes q_3)).
	\end{aligned}
	\end{equation*} It completes the proof. 
\end{proof}Relation between BiHom-quadri conformal algebras and Rota-Baxter operators is given as follows.\begin{prop}\label{prop6.7}
Let $R: D \to D$ be a Rota-Baxter operator of weight $0$ and the algebra $(D,\<_\l, \>_\l, \a, \b)$ be a BiHom-dendriform conformal algebra. Define new operations on $D$ by using $p\searrow_{\l}^{R} q= R(p)\>_\l q$, $p \nearrow_{\l}^{R} q = p\>_\l R(q),$ $p \swarrow_{\l}^{R} q = R(p) \<_\l q$ and $p\nwarrow_{\l}^{R} q = p \<_\l R(q).$ Then,  $(D, \nwarrow_{\l}^{R}, \swarrow_{\l}^{R}, \nearrow_{\l}^{R}, \searrow_{\l}^{R}, \a, \b)$ is defined as a BiHom-quadri conformal algebra .
\end{prop}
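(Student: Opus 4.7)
The plan is to verify, for the four operations $\nwarrow^{R}_\lambda, \swarrow^{R}_\lambda, \nearrow^{R}_\lambda, \searrow^{R}_\lambda$, every axiom of Definition \ref{BHQCA}. Conformal sesqui-linearity and the multiplicativity identities (\ref{eq43})--(\ref{eq44}) are immediate: each of the four new operations is the composition of the $\mathbb{C}[\partial]$-linear map $R$ (applied to one of its slots) with either $\<_\lambda$ or $\>_\lambda$, and $R$ commutes with $\alpha, \beta$ by Definition \ref{def6.6}, so (\ref{D1})--(\ref{D2}) transfer.

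The pivot of the proof is the explicit identification of the derived operations $\vee_\lambda, \wedge_\lambda, \<^{Q}_\lambda, \>^{Q}_\lambda, *^{Q}_\lambda$ of the candidate quadri structure, obtained via (\ref{eq48})--(\ref{eq52}) from the four new operations. Writing $*_\lambda := \<_\lambda + \>_\lambda$ for the associated BiHom-associative multiplication on $D$ (Corollary \ref{corbihomdend}), a direct computation from the defining formulas of $\nwarrow^{R}, \swarrow^{R}, \nearrow^{R}, \searrow^{R}$ yields
\begin{align*}
p \vee_\lambda q = R(p) *_\lambda q, \qquad p \wedge_\lambda q = p *_\lambda R(q), \qquad p *^{Q}_\lambda q = R(p) *_\lambda q + p *_\lambda R(q).
\end{align*}
Combining the Rota-Baxter identities (\ref{eq59}), (\ref{eq60}) and their sum, one then obtains the three central consequences
\begin{align*}
R(p \<^{Q}_\lambda q) = R(p) \<_\lambda R(q), \qquad R(p \>^{Q}_\lambda q) = R(p) \>_\lambda R(q), \qquad R(p *^{Q}_\lambda q) = R(p) *_\lambda R(q).
\end{align*}

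With these in hand, each of the nine identities in (\ref{eq45})--(\ref{eq47}) reduces to one of (\ref{D3})--(\ref{D5}) applied at inputs of the form $R(q)$ and $R(r)$. The template for every axiom is the same: unfold the definitions on both sides, push $R$ past $\beta$ using $R \circ \beta = \beta \circ R$, apply the appropriate one of (\ref{D3})--(\ref{D5}) on the left-hand side, and apply the matching Rota-Baxter consequence on the right-hand side. As a representative example, the first axiom of (\ref{eq45}) becomes $(p \<_\lambda R(q)) \<_{\lambda+\mu} \beta(R(r)) = \alpha(p) \<_\lambda R(q *^{Q}_\mu r)$; the LHS equals $\alpha(p) \<_\lambda (R(q) *_\mu R(r))$ by (\ref{D3}) and the RHS equals the same expression by the third Rota-Baxter consequence above.

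The main obstacle is purely organizational: one must correctly pair each of the nine compass-direction combinations on the left-hand side with the appropriate dendriform axiom among (\ref{D3})--(\ref{D5}) and the appropriate Rota-Baxter consequence on the right-hand side. No new conceptual ingredient beyond the techniques used in Proposition \ref{proptri} and the observation immediately after Definition \ref{def6.6} is required; once the template above is set up the remaining eight verifications are routine three-line rewrites.
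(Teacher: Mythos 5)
Your proposal is correct and takes essentially the same route as the paper: both verify the quadri axioms (\ref{eq45})--(\ref{eq47}) by unfolding the four operations, commuting $R$ with $\a,\b$, and applying the dendriform axioms (\ref{D3})--(\ref{D5}) at inputs $R(q),R(r)$ together with the Rota--Baxter identities (\ref{eq59})--(\ref{eq60}) and their sum (your identities $R(p\<^{Q}_\l q)=R(p)\<_\l R(q)$, $R(p\>^{Q}_\l q)=R(p)\>_\l R(q)$, $R(p*^{Q}_\l q)=R(p)*_\l R(q)$ are exactly what the paper uses and records in the remark following the proposition), the only difference being that the paper writes out two sample axioms while you give the general template plus one sample. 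Note that, like the paper's own proof, you work with the convention $\<\,=\,\nwarrow+\swarrow$ and $\>\,=\,\nearrow+\searrow$ rather than the literal (apparently transposed) formulas (\ref{eq48})--(\ref{eq49}), which is the reading under which the reduction goes through.
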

\begin{proof}Here we check only one axiom from Def. \ref{BHQCA} and leave the rest to the reader. We denote by $\>_{\l}^{R}, ,\<_{\l}^{R},\vee_{\l}^{R},\wedge_{\l}^{R}, *_{\l}^{R}$ the operations defined on $D$ with respect to  $\nwarrow_{\l}^{R}, \swarrow_{\l}^{R}, \nearrow_{\l}^{R}, \searrow_{\l}^{R}$, that are defined as follows
\begin{equation*}
\begin{aligned} &p \<_{\l}^{R} q = p \nwarrow_{\l}^{R} q + p \swarrow_{\l}^{R} q = p\<_\l R(q) + R(p) \<_\l q,\\& p \>_{\l}^{R} q = p \searrow_{\l}^{R}q + p \nearrow_{\l}^{R} q = R(p)\>_\l q + p \>_\l R(q),\\&   p\wedge_{\l}^{R} q = p \nearrow_{\l}^{R} q + p \nwarrow_{\l}^{R} q = p \>_\l R(q) + p \<_\l R(q),\\& p\vee_{\l}^{R} q=p\swarrow_{\l}^{R} q+ p\searrow_{\l}^{R} q= R(p) \<_{\l} q + R(p) \>_\l q , \\&p *_{\l}^{R} q = p\nwarrow_{\l}^{R} q + p \swarrow_{\l}^{R} q + p \nearrow_{\l}^{R} q + p \searrow_{\l}^{R}q= p \<_\l R(q) + R(p) \<_\l q + R(p) \>_\l q + p \>_\l R(q).\end{aligned}\end{equation*}
Now we have \begin{equation*}
\begin{aligned}
(p \swarrow_{\l}^{R}q) \nwarrow_{\l+\m}^{R} \b(r) &= (R(p) \<_{\l} q) \<_{\l+\m} R(\b(r)) \\&= (R(p) \<_\l q) \<_{\l+\m} \b(R(r))
\\&\overset{(\ref{D3})}{=}
\a(R(p)) \<_\l (q \<_\m R(r) + q\>_\m R(r))
\\&= R(\a(p)) \<_\l (q \nwarrow_{\m}^{R}  r + q \nearrow_{\m}^{R} r)\\& = \a(p) \swarrow_{\l}^{R}(q \wedge_{\m}^{R} r).
\end{aligned}
\end{equation*}
And \begin{equation*}
\begin{aligned}
(p \wedge_{\l}^{R} q) \nearrow_{\l+\m} ^{R} \b(r) &= (p\nearrow_{\l}^{R} q +p \nwarrow_{\l}^{R} q)\>_{\l+\m} R(\b(r))
\\&= (p \<_\l R(q) + p \>_\l R(q)) \>_{\l+\m} \b(R(r))
\\ &\overset{(\ref{D5})}{=} \a(p) \>_\l (R(q) \>_\m R(r))
\\&\overset{(\ref{eq59})}{=}
 \a(p) \>_\l R(q \>_\m R(r) + R(q) \>_\m r)
\\&= \a(p)\>_\l R(q \nearrow_{\m}^{R} r + q \searrow_{\m}^{R} r) \\&= \a(p) \nearrow_{\l}^{R} (q \>_{\m}^{R} r).\end{aligned}
\end{equation*}As required.\end{proof}
\begin{rem}
In the setting of Proposition \ref{prop6.7}, the axioms of the Definition \ref{def6.6} can be rewritten as $R(p \>_{\l}^{R} q) =R(p) \>_\l R(q) $ and $ R(p \<_{\l}^{R} q) = R(p) \<_\l R(q). $ Thus, $R$ is a morphism of BiHom-dendriform conformal algebras from $D_h = (D, \<_{\l}^{R}, \>_{\l}^{R}, \a,\b)$ to $(D, \<_\l, \>_\l, \a,\b)$.\end{rem}
Contrarily, if we denote by $(D, *_\l, \a,\b)$ the BiHom-associative conformal algebra generated from $D$ as in Corollary \ref{corbihomdend}, it is apparent that we have $p \wedge_{\l}^{R} q = p *_{\l} R(q)$ and $p \vee_{\l}^{R} q = R(p) *_\l q,$ for all $p, q\in D.$ In this context, the BiHom-dendriform conformal algebra structure established on $D$ by using Corollary \ref{cor6.3} (for the Rota-Baxter operator $R$ on the BiHom-associative conformal algebra $(D,*_\l,\a,\b)$) is precisely the vertical BiHom-dendriform conformal algebra $D_v= (D, \wedge_{\l}^{R}, \vee_{\l}^{R}, \a, \b)$. And this $D_v$ is obtained from the BiHom-quadri- conformal algebra $(D,\nwarrow_{\l}^{R},\swarrow_{\l}^{R}, \nearrow_{\l}^{R},\searrow_{\l}^{R},\a,\b)$.

 Moreover, pair of commuting Rota-Baxter operators on BiHom-associative conformal algebras provides examples of BiHom-quadri conformal algebras, as in \cite{1}.
\begin{prop}
	On the BiHom-associative algebra  $(A, \tau_\l, \a, \b)$, let $R$ and $P$ be a pair of commuting Rota-Baxter operators of weight $0$ such that $ \a\circ R=R \circ \a $ and $ \b\circ R =R\circ \b$, $\a\circ P= P \circ \a$ and $ \b\circ P= P\circ \b$. Then $P$ is a Rota-Baxter operator with weight $0$ on the BiHom-dendriform conformal algebra $(A, \<_{\l}^{R}, \>_{\l}^{R}, \a, \b)$ corresponding to $R$, as shown in Corollary \ref{cor6.3}.
\end{prop}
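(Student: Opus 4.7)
The plan is to verify the two axioms of Definition \ref{def6.6} for $P$ acting on the BiHom-dendriform conformal algebra $(A, \<_\l^R, \>_\l^R, \a, \b)$ whose operations are recalled as $p \<_\l^R q = p_\l R(q)$ and $p \>_\l^R q = R(p)_\l q$. The commutation conditions $P \circ \a = \a \circ P$ and $P \circ \b = \b \circ P$ are part of the hypotheses, so only the two Rota--Baxter identities with respect to $\<_\l^R$ and $\>_\l^R$ need to be checked. The two calculations will be essentially symmetric, and each will reduce to an application of the Rota--Baxter property of $P$ on the underlying BiHom-associative conformal algebra $(A, \tau_\l, \a, \b)$, combined with the fact that $R$ and $P$ commute.

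I will start with the identity $P(p) \>_\l^R P(q) = P\bigl(p \>_\l^R P(q) + P(p) \>_\l^R q\bigr)$. Unfolding the definition of $\>_\l^R$, the left-hand side becomes $R(P(p))_\l P(q)$, which by commutativity of $R$ and $P$ equals $P(R(p))_\l P(q)$. Since $P$ is a Rota--Baxter operator of weight $0$ on $(A, \tau_\l)$ applied to the pair $(R(p), q)$, this rewrites as
\begin{equation*}
P\bigl(P(R(p))_\l q + R(p)_\l P(q)\bigr) = P\bigl(R(P(p))_\l q + R(p)_\l P(q)\bigr),
\end{equation*}
where in the last step I again used $R \circ P = P \circ R$. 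Reinterpreting through $\>_\l^R$, the two summands are exactly $P(p) \>_\l^R q$ and $p \>_\l^R P(q)$, which yields the desired identity.

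For the identity $P(p) \<_\l^R P(q) = P\bigl(p \<_\l^R P(q) + P(p) \<_\l^R q\bigr)$, I will proceed analogously: expand $P(p) \<_\l^R P(q) = P(p)_\l R(P(q)) = P(p)_\l P(R(q))$ using $R \circ P = P \circ R$, then apply the Rota--Baxter property of $P$ on $(A, \tau_\l)$ to the pair $(p, R(q))$, and finally re-fold using $R \circ P = P \circ R$ and the definition of $\<_\l^R$. The main obstacle, to the extent that there is one, is purely bookkeeping: ensuring that every application of $R \circ P = P \circ R$ is used at the right stage so that the Rota--Baxter identity of $P$ on $(A, \tau_\l)$ can be invoked with elements that, after folding, match the operations $\>_\l^R$ and $\<_\l^R$ on the correct side. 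No use of the axioms \eqref{D3}--\eqref{D5} or of the specific form of the BiHom-associativity of $\tau_\l$ is needed; the proof uses only the Rota--Baxter property of $P$, the commutation of $R$ with $P$, $\a$, $\b$, and the defining formulas for $\<_\l^R$ and $\>_\l^R$.
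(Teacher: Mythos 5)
Your proposal is correct and follows essentially the same route as the paper's own proof: unfold $\<_\l^R$ and $\>_\l^R$, commute $R$ past $P$, apply the weight-$0$ Rota--Baxter identity of $P$ on $(A,\tau_\l)$ to the pairs $(p,R(q))$ and $(R(p),q)$ respectively, and then commute $R$ and $P$ back to re-fold the operations. The paper likewise uses only Eq. (\ref{eq1}) with $\theta=0$ and the commutation $RP=PR$, so there is nothing to add.
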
\begin{proof}We verify the axioms in equations (cf. \ref{eq59} and \ref{eq60}). For all $p, q \in A$, we have:
\begin{equation*}
\begin{aligned}
P(p) \<_{\l}^{R} P(q) = P(p)_{\l} R(P(q)) &= P(p)_\l P(R(q))
\\&\overset{(\ref{eq1})}{=} P(p_\l P(R(q)) + P(p)_\l R(q))
\\&= P(p_\l R(P(q)) + P(p)_\l R(q))
\\&= P(p \<_{\l}^{R} P(q) + P(p) \<_{\l}^{R} q).
\end{aligned}
\end{equation*}And\begin{equation*}
\begin{aligned}
P(p)\>_{\l}^{R} P(q) = R(P(p))_\l P(q) &= P(R(p))_\l P(q)
\\&\overset{(\ref{eq1})}{=} P(R(p)_\l P(q) + P(R(p))_\l q) \\&= P(R(p)_\l P(q) + R(P(p))_\l q)
\\&= P(p\>_{\l}^{R} P(q) + P(p) \>_{\l}^{R} q).
\end{aligned}
\end{equation*}
As required.
\end{proof}
\section{ Rota-Baxter system on the BiHom-associative conformal algebras}
In this section, we define a generalization of Rota-Baxter operators that is called as Rota-Baxter system in the BiHom-associative conformal algebra and BiHom-dendriform conformal algebras' context. We further investigate the way to establish a BiHom-quadri-conformal algebra from Rota-Baxter system on BiHom-dendriform conformal algebras. A Rota-Baxter system can be defined as follows:
\begin{defn}Assuming that $A$ be an associative conformal algebra with  two $\mathbb{C}[\p]$- module morphisms $R, S : A \to A$, then a Rota-Baxter system is the triple $(A, R, S)$, if for all $p, q \in A$ and $\l\in \mathbb{C}$, following conditions hold \begin{eqnarray}
	R(p)_\l R(q) &= R(R(p)_\l q + p_\l S(q)), \\
	S(p)_{\l}S(q) &= S(R(p)_\l q + p_\l S(q)).
	\end{eqnarray}
\end{defn}
\begin{defn}\label{def7.4} Assume that $(A, \tau_\l, \a, \b)$ is a BiHom-associative conformal algebra with two $\mathbb{C}[\p]$-module morphisms $R, S : A \to A$. A Rota-Baxter system  $(A, \a, \b, R, S)$ on BiHom associative conformal algebra satisfies,\begin{eqnarray}
\a R = R \a, R \b = \b R, & \a S = S \a, S \b = \b S, \label{1122} \\
R(p)_\l R(q) &= R(R(p)_\l q + p_\l S(q)), \label{1133}\\
S(p)_\l S(q) &= S(R(p)_\l q + p_\l S(q)),\label{1144} 	\end{eqnarray}for all $p, q\in A$ and $\l\in \mathbb{C}$.\end{defn}
\begin{prop} 
	Consider an associative conformal algebra $(A, \tau_\l)$ and two commuting algebra endomorphisms $\a, \b: A \to A$. In order for $(A, R, S)$ to be a Rota-Baxter system on associative conformal algebra $A$, it is assumed that $R, S: A \to A$ are $\mathbb{C}[\p]$-module morphisms commuting with $\a, \b$. If we introduce a new multiplication on $A$ by $p *_\l q = \a(p)_\l \b(q)$ for all $p, q \in A$, then $A_{\a,\b} = (A, *_\l, \a, \b)$ is a BiHom-associative conformal algebra, also known as the Yau twist of $(A, \tau_\l)$, and $(A_{\a, \b}, \a,\b, R, S)$ also forms a Rota-Baxter system.\end{prop}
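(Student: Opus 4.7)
The plan is to establish the two claims in turn: first that $A_{\a,\b}=(A,*_\l,\a,\b)$ is a BiHom-associative conformal algebra, and second that $(A_{\a,\b},\a,\b,R,S)$ satisfies the axioms of Definition \ref{def7.4}. The first part is a standard Yau-twist argument. The conformal sesqui-linearity of $*_\l$ is inherited from that of $\tau_\l$ together with $\a\p=\p\a$ and $\b\p=\p\b$. The BiHom-multiplicativity of $\a$ (and of $\b$) with respect to $*_\l$ reduces at once to the fact that $\a,\b$ are commuting algebra endomorphisms of $(A,\tau_\l)$. The conformal BiHom-associativity
\[ \a(p)*_\l(q*_\m r)=(p*_\l q)*_{\l+\m}\b(r) \]
would be obtained by expanding both sides via $p*_\l q=\a(p)_\l\b(q)$, sliding $\a$ and $\b$ past each other and past $\tau_\l$ using that they commute with each other and are algebra morphisms, and then invoking the original associativity of $\tau_\l$.

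For the second part, the commutation relations in (\ref{1122}) hold by hypothesis. To verify (\ref{1133}) I would compute
\[ R(p)*_\l R(q)=\a(R(p))_\l\b(R(q))=R(\a(p))_\l R(\b(q)), \]
using $R\a=\a R$ and $R\b=\b R$. The original Rota-Baxter system identity on $(A,\tau_\l)$ then rewrites this as
\[ R\bigl(R(\a(p))_\l\b(q)+\a(p)_\l S(\b(q))\bigr). \]
Pushing $R$ outside and using the commutations $R\a=\a R$, $R\b=\b R$, $S\a=\a S$, $S\b=\b S$ allows refolding both terms via the definition of $*_\l$, yielding $R(R(p)*_\l q+p*_\l S(q))$. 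The identity (\ref{1144}) for $S$ is proved by the identical rearrangement, applying $S$ in place of the outer $R$ and invoking (\ref{1144}) for $\tau_\l$.

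The only obstacle is bookkeeping: one must ensure that every twisting map can be slid past the others so that the untwisted Rota-Baxter system axioms for $(A,\tau_\l)$ are applicable inside the twisted expression. Since all four maps $\a,\b,R,S$ pairwise commute and $R,S$ are $\mathbb{C}[\p]$-module morphisms, these manipulations are routine and present no genuine difficulty.
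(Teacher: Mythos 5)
Your proposal is correct and follows essentially the same route as the paper: unfold $*_\l$ via the definition, slide $R$ (resp.\ $S$) past $\a,\b$, apply the untwisted Rota--Baxter system identity to $\a(p)$ and $\b(q)$, and refold using the commutation relations, with the Yau-twist BiHom-associativity handled by the standard expansion using $\a\b=\b\a$ and associativity of $\tau_\l$. The only difference is cosmetic and in your favor: for the $S$-identity you invoke the correct untwisted axiom $S(p)_\l S(q)=S(R(p)_\l q+p_\l S(q))$, whereas the paper's printed computation appears to swap $R$ and $S$ inside the bracket.
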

\begin{proof}One by one, we prove each of the conditions in definition \ref{def7.4}:
	\begin{equation*}\begin{aligned}
	R(p)*_\l R(q)=& \a(R(p))_\l\b(R(q)) \\&= R(\a(p))_\l R(\b(q))
	\\&= R(R(\a(p))_\l \b(q) + \a(p)_\l S(\b(q)))
	\\&= R(\a(R(p))_\l \b(q) + \a(p)_\l \b(S(q)))
	\\&= R(R(p) *_\l q + p *_\l S(q)).
	\end{aligned}
	\end{equation*}
	\begin{equation*}\begin{aligned}
	S(p)*_\l S(q)=& \a(S(p))_\l\b(S(q)) \\&= S(\a(p))_\l S(\b(q))
	\\&= S(S(\a(p))_\l \b(q) + \a(p)_\l R(\b(q)))
	\\&= S(\a(S(p))_\l \b(q) + \a(p)_\l \b(R(q)))
	\\&= S(S(p) *_\l q + p *_\l R(q)).
	\end{aligned}
	\end{equation*}It completes the proof.
\end{proof}Rota-Baxter operators in Eq. (\ref{eq1}) are examples of Rota-Baxter systems as stated in the following assertion.	
\begin{prop}Assume that $(A, \tau_\l,\a,\b)$ is a BiHom-associative conformal algebra and that $R: A \to A$ is a Rota-Baxter operator of weight $\theta$ that commutes with $\a$ and $\b$. In that case, $(A, \a, \b, R, R + \theta id)$ and $(A, \a, \b, R + \theta id, R)$ are Rota-Baxter systems, where $R + \theta id$ is a map from $A$ to $A$, defined by $(R + \theta id)(p) = R(p) + \theta p$ for all $p \in A$.
\end{prop}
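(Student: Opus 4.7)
The plan is to verify the three defining conditions of a Rota–Baxter system in Definition \ref{def7.4} for each of the two candidate pairs $(R,R+\theta\,\mathrm{id})$ and $(R+\theta\,\mathrm{id},R)$, using as the sole nontrivial input the weight-$\theta$ Rota–Baxter identity \eqref{eq1} for $R$ and the hypothesis that $R$ commutes with $\a$ and $\b$.

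First I would dispose of the commutativity conditions \eqref{1122}. For $R$ itself they are built into the hypothesis; for $R+\theta\,\mathrm{id}$ they follow by $\mathbb{C}$-linearity of $\a$ and $\b$ (which commute with the identity map trivially), so $\a(R+\theta\,\mathrm{id})=\a R+\theta\a=R\a+\theta\a=(R+\theta\,\mathrm{id})\a$, and similarly on the other side and for $\b$. This is immediate and deserves only a sentence.

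Next I would treat the pair $(R,R+\theta\,\mathrm{id})$. Writing $S=R+\theta\,\mathrm{id}$, condition \eqref{1133} reads
\[
R(p)_\l R(q)=R\bigl(R(p)_\l q+p_\l(R(q)+\theta q)\bigr)=R\bigl(R(p)_\l q+p_\l R(q)+\theta(p_\l q)\bigr),
\]
which is precisely \eqref{eq1}. For \eqref{1144} I would expand the left side bilinearly:
\[
S(p)_\l S(q)=R(p)_\l R(q)+\theta\,R(p)_\l q+\theta\,p_\l R(q)+\theta^{2}(p_\l q),
\]
and the right side using $S=R+\theta\,\mathrm{id}$ together with \eqref{eq1}:
\[
S\bigl(R(p)_\l q+p_\l S(q)\bigr)=R\bigl(R(p)_\l q+p_\l R(q)+\theta(p_\l q)\bigr)+\theta\bigl(R(p)_\l q+p_\l R(q)+\theta(p_\l q)\bigr),
\]
and check that the two expressions agree, which they do after invoking \eqref{eq1} on the first summand.

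The pair $(R+\theta\,\mathrm{id},R)$ is handled symmetrically: writing $R'=R+\theta\,\mathrm{id}$ and $S'=R$, condition \eqref{1144} collapses to \eqref{eq1} after rewriting $R'(p)_\l q+p_\l S'(q)=R(p)_\l q+\theta(p_\l q)+p_\l R(q)$, and condition \eqref{1133} is verified by the same type of expansion as above, with the roles of the two operators interchanged. I expect no genuine obstacle: the argument is purely an exercise in bookkeeping, since the conformal sesqui-linearity and the fact that $\tau_\l$ is $\mathbb{C}$-bilinear let one manipulate the scalar $\theta$ freely. The only thing to be careful about is tracking the $\theta^2$ term and ensuring that the Rota–Baxter identity is applied to the full bracket $R(p)_\l q+p_\l R(q)+\theta(p_\l q)$, not a partial piece of it.
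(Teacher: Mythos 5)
Your proposal is correct and follows essentially the same route as the paper: the commutativity conditions are immediate, condition \eqref{1133} for the pair $(R,R+\theta\,\mathrm{id})$ is exactly the weight-$\theta$ Rota--Baxter identity \eqref{eq1} rewritten, and condition \eqref{1144} is checked by bilinear expansion of $(R+\theta\,\mathrm{id})(p)_\l(R+\theta\,\mathrm{id})(q)$ together with one application of \eqref{eq1}. The paper treats only the first pair and declares the second analogous, while you sketch it explicitly, but this is the same bookkeeping argument.
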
\begin{proof}We just establish that the system $(A, \a, \b, R, R+  \theta id)$ is a Rota-Baxter system; and the other case is analogous. It is easy to demonstrate Eq. (\ref{1122}). We calculate, Eq. (\ref{1133}) as follows
	\begin{equation*}\begin{aligned}
	R(p)_\l R(q)&= R(R(p)_\l q + p_\l R(q) + \theta (p_\l q))\\&= R(R(p)_\l q + p_\l (R + \theta id)(q)).
	\end{aligned}
	\end{equation*}
	For Eq. (\ref{1144}), we compute:\begin{equation*}\begin{aligned}
	&(R + \theta id)(p)_\l(R + \theta id)(q)
	\\&= (R(p) + \theta p)_\l(R(q) + \theta q) = R(p)_\l R(q) + \theta R(p)_\l q +  \theta p_\l R(q) + \theta^2(p_\l q)
	\\& = R(R(p)_\l q + p_\l R(q) + \theta(p_\l q)) + \theta(R(p)_\l q + p_\l R(q) + \theta (p_\l q))
	\\& = (R + \theta id)(R(p)_\l q + p_\l R(q) + \theta(p_\l q))\\& = (R + \theta id)(R(p)_\l q + p_\l (R + \theta id)(q)).
	\end{aligned}
	\end{equation*}It completes the proof.
\end{proof}

Following is how the Rota-Baxter systems are related to BiHom-dendriform confomal algebras:
 \begin{thm}\label{thm1}
 	Assume that $(A, \tau_\l, \a,\b)$ is a BiHom-associative conformal algebra and that $R, S: A \to A$ be a $\mathbb{C}$-linear maps that commute with $\a$ and $\b$. For any $p, q in A,$ define the operations $\<_\l$ and $\>_\l$ as $p \<_\l q = p_\l S(q)$ and $p \>_\l q = R(p)_\l q, $ respectively. In the case if $(A,\a,\b, R, S)$ is a Rota-Baxter system, then $(A,\<_\l,\>_\l,\a,\b)$ is a BiHom-dendriform conformal algebra.
 \end{thm}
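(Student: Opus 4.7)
The plan is to verify, for the operations $p \<_\l q = p_\l S(q)$ and $p \>_\l q = R(p)_\l q$, each of the axioms defining a BiHom-dendriform conformal algebra in turn. Conformal sesqui-linearity of $\<_\l$ and $\>_\l$ follows immediately from the conformal sesqui-linearity of $\tau_\l$ together with the fact that $R$ and $S$ are $\mathbb{C}[\p]$-module morphisms; the BiHom-multiplicativity axioms \eqref{D1} and \eqref{D2} follow from the hypothesis that $R,S$ commute with both $\a$ and $\b$ and from the multiplicativity of $\a,\b$ with respect to $\tau_\l$. These steps are routine bookkeeping.

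The three substantial identities are \eqref{D3}, \eqref{D4} and \eqref{D5}. For \eqref{D4}, I would begin from the left-hand side $(p\>_\l q)\<_{\l+\m}\b(r)=(R(p)_\l q)_{\l+\m}S(\b(r))$, use $S\b=\b S$ to rewrite $S(\b(r))=\b(S(r))$, apply conformal BiHom-associativity \eqref{eqA4} to obtain $\a(R(p))_\l (q_\m S(r))$, and then use $\a R=R\a$ to identify this with $R(\a(p))_\l(q\<_\m r)=\a(p)\>_\l(q\<_\m r)$. No Rota-Baxter system hypothesis is needed here; it is purely associativity plus commutation of the maps.

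The identities \eqref{D3} and \eqref{D5} are the places where \eqref{1133} and \eqref{1144} do the real work. For \eqref{D3}, I would rewrite $(p\<_\l q)\<_{\l+\m}\b(r)=(p_\l S(q))_{\l+\m}\b(S(r))$, apply \eqref{eqA4} to get $\a(p)_\l(S(q)_\m S(r))$, and then invoke \eqref{1144} to replace $S(q)_\m S(r)$ by $S(R(q)_\m r+q_\m S(r))=S(q\>_\m r+q\<_\m r)$; the result is exactly $\a(p)\<_\l(q\>_\m r+q\<_\m r)$. For \eqref{D5}, I would compute the right-hand side by linearity as $(R(p)_\l q+p_\l S(q))\>_{\l+\m}\b(r)=R(R(p)_\l q+p_\l S(q))_{\l+\m}\b(r)$, apply \eqref{1133} backwards to collapse this to $(R(p)_\l R(q))_{\l+\m}\b(r)$, and finally use \eqref{eqA4} together with $R\a=\a R$ to reach $\a(R(p))_\l(R(q)_\m r)=\a(p)\>_\l(q\>_\m r)$.

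The only conceptual obstacle is recognizing the correct direction in which to apply \eqref{1133} and \eqref{1144}: in \eqref{D3} one expands the product $S(q)_\m S(r)$ via \eqref{1144}, while in \eqref{D5} one contracts the sum $R(R(p)_\l q+p_\l S(q))$ via \eqref{1133}. Once this pattern is identified, the entire proof reduces to a coordinated application of BiHom-associativity \eqref{eqA4} and the commutation relations \eqref{1122}, so no additional ideas beyond those already used in Proposition \ref{proptri} and Corollary \ref{cor6.3} are required.
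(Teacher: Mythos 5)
Your argument is correct and follows essentially the same route as the paper's own proof: the paper also disposes of sesqui-linearity and the $\a,\b$-multiplicativity via the commutation relations, verifies \eqref{D4} using only BiHom-associativity \eqref{eqA4} together with $S\b=\b S$ and $R\a=\a R$, and establishes \eqref{D3} and \eqref{D5} by exactly the expansion/contraction pattern you describe, namely rewriting $S(q)_\m S(r)$ via \eqref{1144} and collapsing $R(R(p)_\l q+p_\l S(q))$ via \eqref{1133}. No gaps to report.
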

\begin{proof} We only prove Eqs. (\ref{D3})-(\ref{D5}) and leave the rest to the reader. We have: \begin{equation*}
		\begin{aligned}
		&(p \<_\l q) \<_{\l+\m} \b(r) - \a(p) \<_\l(q \<_\m r) - \a(p)\<_\l(q \>_\m r)
		\\&= (p_\l S(q))\<_{\l+\m}\b(r) - \a(p) \<_{\l} (q_\m S(r)) - \a(p) \<_\l (R(q)_\m r)\\&
		= (p_\l S(q))_{\l+\m}S(\b(r)) -\a(p)_{\l}S(q_\m S(r)) -\a(p)_\l S(R(q)_\m r)\\&
		= (p_\l S(q))_{\l+\m}\b(S(r)) -\a(p)_\l S(R(q)_\m r + q_\m S(r))\\&
	= \a(p)_\l(S(q)_\m S(r)) -\a(p)_\l S(R(q)_\m r + q_\m S(r))\\&
		= \a(p)_\l(S(q)_\m S(r)) -\a(p)_\l (S(q)_\m S(r))\\& = 0.
		\end{aligned}
		\end{equation*}
		\begin{equation*}
		\begin{aligned}
		&(p \>_\l q)\<_{\l+\m} \b(r) - \a(p) \>_\l (q \<_{\m} r)\\&
		= (R(p)_\l q) \<_{\l+\m} \b(r) -\a(p) \>_\l (q_\m S(r))\\&
		= (R(p)_\l q)_{\l+\m}S(\b(r)) - R(\a(p))_\l(q_\m S(r))\\&
		= (R(p)_\l q)_{\l+\m}\b(S(r)) - \a(R(p))_\l(q_\m S(r))\\&
= \a(R(p))_\l(q_\m S(r)) -\a (R(p))_\l(q_\m S(r)) \\&= 0.
		\end{aligned}
		\end{equation*}
		And \begin{equation*}
		\begin{aligned}
		&\a(p) \>_\l (q \>_\m r) - (p \<_\l q) \>_{\l+\m} \b(r) - (p \>_\l q) \>_{\l+\m} \b(r)\\&
		= \a(p) \>_\l(R(q)_\m r) - (p_\l S(q)) \>_{\l+\m}\b(r) -(R(p)_\l q) \>_{\l+\m}\b(r)\\&
		= R(\a(p))_\l(R(q)_\m r)- R(p_\l S(q))_{\l+\m}\b(r)- R(R(p)_{\l}q)_{\l+\m}\b(r)\\&
		= \a(R(p))_\l(R(q)_\m r)- (R(p)_\l R(q))_{\l+\m}\b(r)\\&
		= (R(p)_\l R(q))_{\l+\m}\b(r) - (R(p)_\l R(q))_{\l+\m}\b(r) \\&= 0.
		\end{aligned}
		\end{equation*}It completes the proof.	
\end{proof}
We have a few unique instances of Theorem \ref{thm1}.
 \begin{cor}Assume that $R, S: A \to A$ as described above and that $A$ is an associative conformal algebra. Then, for every $p, q \in A$, $p \<_\l q = p_\l S(q)$ and $p \>_\l q = R(p)_\l q,$ we may define new operations $ \<_\l$ and $\>_\l$ on $A$. Assuming that $(A, R, S)$ is a Rota-Baxter system, $(A, \<_\l, \>_\l)$ is a dendriform conformal algebra.
\end{cor}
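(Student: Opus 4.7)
The plan is to deduce this corollary directly from Theorem \ref{thm1} by specializing the structure maps to the identity. An associative conformal algebra $(A, \tau_\l)$ is precisely a BiHom-associative conformal algebra $(A, \tau_\l, \a, \b)$ with $\a = \b = \mathrm{id}_A$, since the BiHom-multiplicativity axioms become trivial and the BiHom-associativity axiom \eqref{eqA4} reduces to the usual associativity $p_\l(q_\m r) = (p_\l q)_{\l+\m} r$. Analogously, a dendriform conformal algebra is a BiHom-dendriform conformal algebra with $\a = \b = \mathrm{id}_A$.

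First, I would verify that the hypotheses of Theorem \ref{thm1} are met in this setting. With $\a = \b = \mathrm{id}_A$, the commutativity conditions $R\a = \a R$, $R\b = \b R$, $S\a = \a S$, $S\b = \b S$ are automatically satisfied for any $\mathbb{C}[\p]$-module morphisms $R, S : A \to A$. The Rota-Baxter system identities \eqref{1133} and \eqref{1144} in Definition \ref{def7.4} coincide exactly with the classical Rota-Baxter system axioms, so the hypothesis that $(A, R, S)$ is a Rota-Baxter system gives us $(A, \mathrm{id}, \mathrm{id}, R, S)$ as a Rota-Baxter system on the BiHom-associative conformal algebra $(A, \tau_\l, \mathrm{id}, \mathrm{id})$.

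Next, I would apply Theorem \ref{thm1} to conclude that $(A, \<_\l, \>_\l, \mathrm{id}, \mathrm{id})$ is a BiHom-dendriform conformal algebra, where $p \<_\l q = p_\l S(q)$ and $p \>_\l q = R(p)_\l q$. Specializing the BiHom-dendriform axioms \eqref{D1}--\eqref{D5} to $\a = \b = \mathrm{id}_A$ recovers the classical dendriform conformal axioms, so $(A, \<_\l, \>_\l)$ is a dendriform conformal algebra, which is the desired conclusion.

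There is essentially no obstacle here: the entire content of the corollary sits inside Theorem \ref{thm1}, and the only thing to do is to observe that the classical setting is the $\a = \b = \mathrm{id}_A$ specialization of the BiHom setting. If a self-contained proof were preferred over an appeal to Theorem \ref{thm1}, one could repeat the three computations in that proof verbatim with $\a = \b = \mathrm{id}_A$; the manipulations involve only sesqui-linearity, associativity of $\tau_\l$, and the two Rota-Baxter system identities, with no new ideas required.
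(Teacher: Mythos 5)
Your proposal is correct and matches the paper's own proof, which likewise obtains the corollary by taking $\a=\b=\mathrm{id}_A$ in Theorem \ref{thm1}; your additional remarks simply spell out why the specialization recovers the classical notions.
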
 
\begin{proof}By assuming $\a=\b = id_A,$ in Theorem \ref{thm1}.
	\end{proof}
\begin{cor}Let $R: A \to A$ be a Rota-Baxter operator of weight $0$ commuting with $a$ and $b$ and $(A,\tau_\l,\a,\b)$ be a BiHom-associative conformal algebra. For any $p, q \in A$, define the operations $\<_\l$ and $\>_\l$ by 	$p \<_\l q = p_\l R(q)$ and $p\>_\l q = R(p)_\l q$ respectively. Then $(A, \<_\l,\>_\l,\a,\b)$ is a BiHom-dendriform conformal algebra. 
\end{cor}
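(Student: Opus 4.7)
The plan is to derive this corollary as a direct specialization of Theorem \ref{thm1}, via the observation that a weight-$0$ Rota-Baxter operator automatically yields a Rota-Baxter system by taking $S = R$. This was in fact foreshadowed by the preceding proposition, which shows that for any weight $\theta$, the pair $(R, R + \theta\, id)$ constitutes a Rota-Baxter system; specializing to $\theta = 0$ collapses this to the pair $(R, R)$.

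First I would verify that $(A, \a, \b, R, R)$ satisfies Definition \ref{def7.4}. The hypotheses $R\a = \a R$ and $R\b = \b R$ give Eq.~(\ref{1122}) for both $R$ and $S := R$. The Rota-Baxter identity of weight $0$, namely
\begin{equation*}
R(p)_\l R(q) = R(R(p)_\l q + p_\l R(q)),
\end{equation*}
is literally both Eq.~(\ref{1133}) and Eq.~(\ref{1144}) once we set $S = R$. Hence $(A, \a, \b, R, R)$ is a Rota-Baxter system in the sense of Definition \ref{def7.4}.

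Next, I would invoke Theorem \ref{thm1} applied to this Rota-Baxter system. The operations it produces are
\begin{equation*}
p \<_\l q = p_\l S(q) = p_\l R(q), \qquad p \>_\l q = R(p)_\l q,
\end{equation*}
which match the operations in the statement of the corollary verbatim. Theorem \ref{thm1} then asserts that $(A, \<_\l, \>_\l, \a, \b)$ is a BiHom-dendriform conformal algebra.

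There is essentially no obstacle here, since the statement is a clean specialization; the only thing to flag is that the conformal sesqui-linearity and BiHom-multiplicativity of $\<_\l, \>_\l$ (needed to even enter the BiHom-dendriform framework) follow from the corresponding properties of $\tau_\l$ together with $R$ being a $\mathbb{C}[\p]$-module morphism commuting with $\a$ and $\b$; these are already subsumed in the verification carried out inside the proof of Theorem \ref{thm1}. Alternatively, one may present this corollary as the $\theta = 0$ case of Corollary \ref{cor6.3}, which was established independently of the Rota-Baxter-system machinery; the two derivations produce identical BiHom-dendriform structures.
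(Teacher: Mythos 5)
Your proposal is correct and matches the paper's own proof, which likewise obtains the corollary by setting $S = R$ in Theorem \ref{thm1} after noting that the weight-$0$ Rota-Baxter identity makes $(A,\a,\b,R,R)$ a Rota-Baxter system. Your closing remark that the same structure already appears as Corollary \ref{cor6.3} is accurate but not needed for the argument.
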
 
\begin{proof}By employing $R = S$ in the Theorem \ref{thm1}, it is obvious that $R$ is a Rota-Baxter operator with weight $0$.
	\end{proof}
 As the Rota-Baxter systems for the BiHom-associative conformal algebras, we now define the Rota-Baxter system for the BiHom-dendriform conformal algebra in the following manner.
\begin{defn}
	Let $(D,\<_\l,\>_\l,\a,\b)$ be a BiHom-dendriform conformal algebra and $R, S : A\to A$  are two
	$\mathbb{C}$-linear operators. We call $(D,\<_\l,\>_\l,\a,\b, R, S)$ a Rota–Baxter system on a BiHom dendriform conformal algebra if $R \a=\a R $
	$R\b=\b R$, $S\a=\a S$, $S \b = \b S$ and the following conditions are satisfied, for all $p, q \in D$:
	\begin{equation}\label{eq80}
	\begin{aligned}
	R(p) \>_\l R(q)& = R(p\>_\l S(q) + R(p)\>_\l q), \\
	R(p) \<_\l R(q)& = R(p \<_\l S(q) + R(p) \<_\l q), \\
	S(p) \>_\l S(q) &= S(p \>_\l S(q) + R(p)\>_\l q),\\
	S(p) \<_\l S(q)& = S(p \<_\l S(q) + R(p) \<_\l q). 
	\end{aligned}
	\end{equation}
\end{defn}
 This construction will further aid us to construct BiHom-quadri conformal algebra for Rota-Baxter system on dendriform conformal algebra.
\begin{prop}\label{prop7.6}
	Consider $(D, \<_\l, \>_\l, \a, \b)$ and $(D, R, S)$ are a BiHom-dendriform conformal algebra and a Rota-Baxter
	system respectively. Define new operations on $D$ as follows: $$p \swarrow_{\l}^{R} q = R(p) \>_\l q, p \nwarrow_{\l}^{R}q = p \>_\l S(q), p \searrow_{\l}^{R} q =
	R(p) \<_\l q \textit{ and }p \nearrow_{\l}^{R} q = p \<_\l S(q),\textit{ for all } p, q\in D.$$ A BiHom-quadri conformal algebra is then defined as $(D, \nwarrow_{\l}^{R}, \swarrow_{\l}^{R}, \nearrow_{\l}^{R}, \searrow_{\l}^{R}, \a,\b)$.
\end{prop}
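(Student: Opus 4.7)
The plan is to verify every axiom of Definition \ref{BHQCA} for the seven-tuple $(D, \nwarrow^R_\l, \swarrow^R_\l, \nearrow^R_\l, \searrow^R_\l, \a, \b)$, following the template used in the proof of Proposition \ref{prop6.7}, with the four Rota-Baxter system identities in (\ref{eq80}) playing the role previously served by the single Rota-Baxter identity.

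First I would dispose of the routine conditions. Conformal sesqui-linearity of each of the four new operations is immediate from the sesqui-linearity of the original $\<_\l$ and $\>_\l$ together with the fact that $R$ and $S$ commute with $\p$, since they are $\mathbb{C}[\p]$-module maps. The multiplicativity conditions (\ref{eq43})--(\ref{eq44}) for $\a$ and $\b$ follow from the BiHom-dendriform multiplicativity axioms (\ref{D1})--(\ref{D2}) combined with the commutation relations $\a R = R \a$, $\b R = R \b$, $\a S = S \a$, $\b S = S \b$ that are built into the definition of a Rota-Baxter system on $D$.

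For the nine structural axioms (\ref{eq45})--(\ref{eq47}) the uniform strategy is: unfold both sides using the definitions of $\nwarrow^R, \swarrow^R, \nearrow^R, \searrow^R$ in terms of $\<_\l, \>_\l, R, S$; slide $R$ and $S$ past $\b$ using the commutation relations; apply one of the BiHom-dendriform re-association axioms (\ref{D3}), (\ref{D4}), (\ref{D5}) to move the outer multiplication inside; and, whenever a product of the form $R(x) \>_\l R(y)$ or $S(x) \<_\l S(y)$ arises, consolidate it via one of the four identities in (\ref{eq80}) into a single $R$- or $S$-image. As a guiding example, for the middle axiom of (\ref{eq47}) one would expand $p *_\l q$ as the sum of the four basic products, apply (\ref{D3})--(\ref{D5}) term by term, and then collapse the resulting $R(x) \>_\l R(y)$ and $R(x) \<_\l R(y)$ summands using the first two identities in (\ref{eq80}); the remaining two axioms in the same row, and the three ``pure'' axioms of (\ref{eq45}), are handled by the same template but invoke only a single identity from (\ref{eq80}).

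The main obstacle is the axioms whose right-hand side contains one of the composite operations $\vee$, $\wedge$, or $*$, since there one has to combine a Rota-Baxter system identity for $R$ with one for $S$ and keep careful track of which of the four identities in (\ref{eq80}) applies to which summand; the cancellation of the cross terms between $\<_\l$- and $\>_\l$-type products is exactly what is encoded by those identities. Once one or two of these mixed cases have been written out in full, the remaining axioms follow by entirely analogous bookkeeping and, as in the proof of Proposition \ref{prop6.7}, can reasonably be left to the reader.
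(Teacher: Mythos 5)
Your proposal is correct and follows essentially the same route as the paper's own proof: unfold the four new operations, use the commutation of $R,S$ with $\a,\b$, apply the BiHom-dendriform axioms (\ref{D3})--(\ref{D5}), and collapse the resulting sums via the Rota--Baxter system identities (\ref{eq80}), writing out only representative axioms in full. The paper does exactly this (it verifies the conditions in Eq.~(\ref{eq45}) explicitly and leaves the remaining ones to the reader), so there is no substantive difference beyond which sample identities are displayed.
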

\begin{proof}The operations $\>_{\l}^{R}, \<_{\l}^{R}, \vee_{\l}^{R}, \wedge_{\l}^{R},*_{\l}^{R}$ defined on $D$ by Eqs. (\ref{eq48})-(\ref{eq52}) corresponding to the operations $\nwarrow_{\l}^{R}, \swarrow_{\l}^{R}, \nearrow_{\l}^{R}, \searrow_{\l}^{R}$ are
	\begin{equation*}
	\begin{aligned}
	p \<_{\l}^{R} q  &:= p \nwarrow_{\l}^{R} q + p \swarrow_{\l}^{R} q= p\<_\l S(q)+ R(p)\<_\l q,\\ p \>_{\l}^{R} q&:= p \nearrow_{\l}^{R} q + p \searrow_{\l}^{R} q= p\>_{\l}S(q)+ R(p)\>_\l q,\\ p \vee _{\l}^{R} q &:= p \searrow_{\l}^{R} q + p \swarrow_{\l}^{R} q= p\>_\l S(q)+ p\<_\l S(q),\\ p \wedge_{\l}^{R} q &:= p \nearrow_{\l}^{R} q + p \nwarrow_{\l}^{R} q= R(p)\<_\l q+R(p)\>_\l q,\\
	p*_{\l}^{R} q &:= p \nwarrow_{\l}^{R} q + p \swarrow_{\l}^{R} q + p \searrow_{\l}^{R} q + p \nearrow_{\l}^{R} q= p\<_\l S(q)+R(p)\<_\l q+ p\>_\l S(q)+R(p)\>_\l q
	\end{aligned}
	\end{equation*}for all $p, q \in D$. We only check the conditions in Eq. (\ref{eq45}) as follows:
	\begin{equation*}
	\begin{aligned}
	\a(p)\nwarrow_{\l}^{R}(q*_{\m}^{R} r)=&\a(p)\<_\l S(q*_{\m}^{R} r)\\&=\a(p)\<_\l S(q\<_\m S(r)+R(q)\<_\m r+ q\>_\m S(r)+R(q)\>_\m r)\\&=\a(p)\<_\l S(q\<_\m S(r)+R(q)\<_\m r)+ \a(p)\<_\l S(q\>_\m S(r)+R(q)\>_\m r)\\&=\a(p)\<_\l (S(q)\>_\m S(r)+ S(q)\<_\m S(r))\\&=(p\<_\l S(q))\<_{\l+\m}\b(S(r))\\&=(p\<_\l S(q))\<_{\l+\m}S(\b(r))\\&=(p\nwarrow_{\l}^{R}S(q))\nwarrow_{\l+\m}^{R}S(\b(r)),
	\end{aligned}
	\end{equation*}
	\begin{equation*}
	\begin{aligned}
	\a(p)\nearrow_{\l}^{R}(q\<_{\m}^{R} r)&=\a(p)\nearrow_{\l}^{R}(q\<_\m S(r)+ R(q)\<_\m r)\\&=\a(p)\>_{\l}S(q\<_\m S(r)+ R(q)\<_\m r)\\&=\a(p)\>_{\l}(S(q)\<_\m S(r))\\&=(p\>_{\l}S(q))\<_\m \b S(r)\\&=(p\>_{\l}S(q))\<_\m  S\b(r)\\&=(p\nearrow_{\l}^{R}S(q))\nwarrow_{\l+\m}^{R} \b S(r),
	\end{aligned}
	\end{equation*}
And	\begin{equation*} \begin{aligned} (p \wedge_{\l}^{R} q) \nearrow_{\l+\m }^{R} \b(r) =& (p \>_\l S(q) + p \<_\l S(q)) \>_{\l+\m} S(\b(r))\\&
	= (p \>_\l S(q) + p \<_\l S(q)) \>_{\l+\m} \b(S(r))\\&
	= \a(p) \>_\l (S(q) \>_{\m}S(r))\\&
	= \a(p) \>_\l S(R(q)\>_\m r + q \>_\m S(r))\\&
	= \a(p) \nearrow_{\l}^{R}(R(q) \>_{\m}r + q \>_\m S(r))\\&
	=\a(p) \nearrow_{\l}^{R} (q \>_{\m}^{R} r).\end{aligned}\end{equation*}
	However, other conditions can be prove similarly.
\end{proof}
\begin{rem}
	In the setting of Proposition \ref{prop7.6}, the axioms in Eq. (\ref{eq80}) can be rewritten as $ R(p \>_{\l}^{R} q) = R(p)\>_\l R(q)$, $ R(p \<_{\l}^{R}q)= R(p) \<_\l R(q)$ and $ S(p \>_{\l}^{R} q)= S(p) \>_\l S(q)$, $ S(p \<_{\l}^{R} q)= S(p) \<_\l S(q) $. Thus, $R$ and $S$ are morphisms from $D_h = (D, \<_{\l}^{R} , \>_{\l}^{R}, \a,\b)$ to $(D, \<_\l, \>_\l, \a, \b)$.
\end{rem}However, if we represent the BiHom-associative conformal algebra generated from $D$ as in the Corollary \ref{corbihomdend}, it is clear that we have $p \wedge_{\l}^{R} q = p *_\l S(q)$ and $p \vee_{\l}^{R} q = R(p) *_\l q,$ for all $p,q\in D$. Thus, the BiHom-dendriform conformal algebra structure acquired on $D$ by applying Theorem \ref{thm1} for the Rota-Baxter system $(R, S)$ on the BiHom-associative conformal algebra $(D,*_\l,\a,\b)$ is identical to the vertical BiHom-dendriform conformal algebra $D_v = (D, \wedge_{\l}^{R}, \vee_{\l}^{R}, \a,\b)$.
\begin{prop}\label{prop7.7}Let  $(A, \a, \b, R, S)$ and $(A, \a,\b , R',S')$  are two Rota-Baxter systems on a BiHom-associative algebra $(A, \tau_\l, \a, \b)$, where $S S'= S' S$,  $ R S' = S'R, $ SR'= R' S$, $ $R R'=R' R$. Then $(R', S')$ is a Rota-Baxter system on the BiHom-dendriform conformal algebra $(A, \<_{\l}^{R},\>_{\l}^{R},\a,\b)$, where $p \<_{\l}^{R} q = p_\l S(q)$ and $p \>_{\l}^{R} q = R(p)_\l q$ for all $p,q\in A.$\end{prop}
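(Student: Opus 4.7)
The plan is to unfold the definitions of $\<_\l^R$ and $\>_\l^R$, use the commutation relations between $R, S, R', S'$ to rewrite each expression into a shape where the Rota-Baxter system identities for $(R', S')$ on the BiHom-associative conformal algebra $(A, \tau_\l, \a, \b)$ can be applied, and then fold the result back using the definitions of $\<_\l^R$ and $\>_\l^R$. First, the compatibility requirements $R'\a = \a R'$, $R'\b = \b R'$, $S'\a = \a S'$, $S'\b = \b S'$ come for free since they are part of the hypothesis that $(A, \a, \b, R', S')$ is a Rota-Baxter system, and Theorem \ref{thm1} already guarantees that $(A, \<_\l^R, \>_\l^R, \a, \b)$ is a BiHom-dendriform conformal algebra. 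What remains is to verify the four identities of Eq. (\ref{eq80}).

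For the first identity I would compute
\begin{equation*}
R'(p) \>_\l^R R'(q) = R(R'(p))_\l R'(q) = R'(R(p))_\l R'(q)
\end{equation*}
using $RR' = R'R$, then apply the Rota-Baxter system identity (\ref{1133}) for $(R', S')$ to rewrite this as $R'(R'(R(p))_\l q + R(p)_\l S'(q))$, convert $R'(R(p))$ back to $R(R'(p))$ and recognize $R(R'(p))_\l q = R'(p) \>_\l^R q$ and $R(p)_\l S'(q) = p \>_\l^R S'(q)$. The second identity is analogous but uses $SR' = R'S$ to rewrite $S(R'(q)) = R'(S(q))$, then applies (\ref{1133}) to $R'(p)_\l R'(S(q))$, and uses $S'S = SS'$ to move $S'$ back through $S$. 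The third identity uses $RS' = S'R$ inside $R(S'(p)) = S'(R(p))$ combined with (\ref{1144}) and $R'R = RR'$. The fourth identity uses $SS' = S'S$ to rewrite $S(S'(q)) = S'(S(q))$ and then invokes (\ref{1144}) directly.

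The verification is entirely mechanical once one observes the pattern: in each case exactly one commutation relation is used to push the ``primed'' operator through the ``unprimed'' operator that appears wrapped inside $\<_\l^R$ or $\>_\l^R$, thereby exposing a product of the form $T'(x)_\l T'(y)$ to which the Rota-Baxter system property of $(R', S')$ applies, and a second commutation relation is used on the resulting $S'(S(\cdot))$ or $R'(R(\cdot))$ term to repackage the right-hand side in terms of $\<_\l^R$ and $\>_\l^R$. The main (mild) obstacle is purely bookkeeping: ensuring that for each of the four identities one picks the correct pair of commutation hypotheses from the four assumed relations $SS'=S'S$, $RS'=S'R$, $SR'=R'S$, $RR'=R'R$, and that the Rota-Baxter system identity applied is the appropriate one of (\ref{1133}) (for products of $R'$'s) or (\ref{1144}) (for products of $S'$'s). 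No new ideas beyond those already used in the proof of Theorem \ref{thm1} are required.
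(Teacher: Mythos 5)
Your proposal is correct and follows essentially the same route as the paper: unfold $\<_\l^R$ and $\>_\l^R$, use exactly one of the commutation relations to expose a product $R'(\cdot)_\l R'(\cdot)$ or $S'(\cdot)_\l S'(\cdot)$, apply the Rota--Baxter system identity (\ref{1133}) or (\ref{1144}) for $(R',S')$, and commute back to repackage in terms of $\<_\l^R$ and $\>_\l^R$. The pairing of commutation hypotheses with each of the four axioms that you describe matches the paper's computation exactly.
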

\begin{proof}By checking the axioms in Eq.(\ref{eq80}) one by one, we have  
	\begin{equation*}
	\begin{aligned}
	R'(p) \>_{\l}^{R} R' (q) =& R(R' (p))_\l R' (q) \\&= R' (R(p))_\l R' (q)\\&= R' (R' (R(p))_\l q + R(p)_\l S'(q)) \\&= R' (R(R' (p))_\l q + R(p)_\l S'(q))
	\\&= R' (R' (p)\>_{\l}^{R} q + p \>_{\l}^{R} S' (q)),
	\end{aligned}
	\end{equation*}
	\begin{equation*}
	\begin{aligned}R' (p) \<_{\l}^{R} R' (q) &= R'(p)_\l S(R' (q))
	\\&= R' (p)_\l R' (S(q))
	\\&= R'(R' (p)_\l S(q) + p_\l S' (S(q)))
	\\& = R' (R' (p)_\l S(q) + p_\l S(S' (q)))
	\\&= R'(R' (p) \<_{\l}^{R}q + p\<_{\l}^{R} S'(q)),
	\end{aligned}
	\end{equation*}\begin{equation*}
	\begin{aligned}S'(p)\>_{\l}^{R}S'(q)&= R(S'(p))_\l S'(q)\\&= S'(R(p))_\l S'(q)\\&= S'(R'(R(p)_\l q + R(p)_\l S'(q))\\&= S'(R(R'(p))_\l q + R(p)_\l S'(q))\\&= S'(R'(p)\>_{\l}^{R} q+ p\>_{\l}^{R} S' (q)),\end{aligned}
	\end{equation*}and\begin{equation*}\begin{aligned}S'(p)\<_{\l}^{R}S'(q) &= S'(p)_\l S(S'(q))\\&= S'(p)_\l S'(S(q))\\&= S'(R'(p)_\l S(q) + p_\l S'(S(q)))\\&= S'(R'(p)_\l S(q) + p_\l S(S'(q)))\\&=S'(R'(p)\<_{\l}^{R}q + p \<_{\l R}S'(q)),\end{aligned}\end{equation*}for all $p,q\in  A$. It completes the proof.
\end{proof}Applying Proposition \ref{prop7.6} to the Rota-Baxter system $(R ',S')$, given in the Proposition \ref{prop7.7}, we obtain the following result.
\begin{cor}
	There is a BiHom-quadri conformal algebra $(A, \nwarrow_{\l}^{R}, \swarrow_{\l}^{R}, \nearrow_{\l}^{R}, \searrow_{\l}^{R}, \a,\b)$ in the context of Proposition \ref{prop7.7}, where the operations are described by
	\begin{equation*}
	\begin{aligned}
   p \searrow_{\l}^{R} q =& R'(p) \>_{\l}^{R} q = R(R'(p))_\l q,\\ p \nearrow_\l q =& p \>_{\l}^{R} S'(q) = R(p)_\l S'(q),\\ p \swarrow_{\l}^{R} q =& R' (p) \<_{\l}^{R} q = R' (p)_\l S(q),\\ p \nwarrow_{\l}^{R} q =& p \<_{\l}^{R} S'(q) = p_\l S(S'(q)),
	\end{aligned}
	\end{equation*}
	for all $p,q \in A$.
\end{cor}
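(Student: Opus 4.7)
The plan is to view the statement as nothing more than the iterated application of Propositions \ref{prop7.7} and \ref{prop7.6}, with the explicit formulas obtained by expanding the definitions. First I would appeal to Proposition \ref{prop7.7}: under the commutation hypotheses $SS' = S'S$, $RS' = S'R$, $SR' = R'S$ and $RR' = R'R$, the pair $(R', S')$ forms a Rota--Baxter system on the BiHom-dendriform conformal algebra $(A, \<_{\l}^{R}, \>_{\l}^{R}, \a, \b)$, whose two operations are $p \<_{\l}^{R} q = p_\l S(q)$ and $p \>_{\l}^{R} q = R(p)_\l q$.

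Next I would invoke Proposition \ref{prop7.6} applied to this Rota--Baxter system $(R', S')$ on the BiHom-dendriform conformal algebra just produced. Proposition \ref{prop7.6} takes a Rota--Baxter system on any BiHom-dendriform conformal algebra and returns a BiHom-quadri conformal algebra whose four operations are built, in a templated way, from the two dendriform operations together with the two maps of the system. Since Proposition \ref{prop7.6} has already been proved, no BiHom-quadri axiom needs rechecking here; the axioms in Definition \ref{BHQCA} hold automatically once the input hypotheses are verified, which they are by Proposition \ref{prop7.7}.

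What remains is the purely computational step of expressing the resulting four operations in closed form. Inserting the formulas for $\>_{\l}^{R}$ and $\<_{\l}^{R}$ gives, for all $p, q \in A$, the equalities
\begin{equation*}
R'(p) \>_{\l}^{R} q = R(R'(p))_\l q, \qquad p \>_{\l}^{R} S'(q) = R(p)_\l S'(q),
\end{equation*}
\begin{equation*}
R'(p) \<_{\l}^{R} q = R'(p)_\l S(q), \qquad p \<_{\l}^{R} S'(q) = p_\l S(S'(q)),
\end{equation*}
which are precisely the expressions for $\searrow_{\l}^{R}$, $\nearrow_{\l}^{R}$, $\swarrow_{\l}^{R}$ and $\nwarrow_{\l}^{R}$ listed in the statement.

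The only substantive point to keep in mind, rather than a real obstacle, is the bookkeeping of commutations: the hypotheses that $R$ and $R'$, $S$ and $S'$, and the mixed pairs $RS'$, $SR'$ commute are exactly what make the two presentations $R \circ R'$ and $R' \circ R$ (and analogously $S \circ S'$ versus $S' \circ S$) interchangeable in the explicit formulas, and they are also what Proposition \ref{prop7.7} uses in verifying that $(R',S')$ is a Rota--Baxter system on $(A, \<_{\l}^{R}, \>_{\l}^{R}, \a, \b)$. Since both ingredient propositions have been established, the corollary follows immediately without any further axiom checking.
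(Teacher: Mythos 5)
Your proposal is correct and follows exactly the paper's route: the paper obtains this corollary by applying Proposition \ref{prop7.6} to the Rota--Baxter system $(R',S')$ furnished by Proposition \ref{prop7.7} on $(A,\<_{\l}^{R},\>_{\l}^{R},\a,\b)$, and then simply expanding $\<_{\l}^{R}$ and $\>_{\l}^{R}$ to get the four displayed formulas. No further axiom checking is needed, just as you argue.
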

In particular, there is a new BiHom-associative conformal algebra $(A, *_\l, \a,\b)$, with $p *_\l q = R(R'(p))_\l q+ R(p)_\l S'(q)+ R'(p)_\l S(q)+ p_\l S(S'(q))$, for all $p,q \in A.$
\section*{Declarations}
\subsection*{Competing interests:}
The authors have no competing interests to declare that are relevant to the content of this paper.
\subsection*{Authors' contributions:} 
All authors contributed equally within the manuscript.
\subsection*{Funding:}This work is supported by the Jiangsu Natural Science Foundation Project (Natural Science Foundation of Jiangsu Province), Relative Gorenstein cotorsion Homology Theory and Its Applications (No.BK20181406).
\subsection*{Availability of data and materials:}
All data is available within the manuscript.

\begin{thebibliography}{999}
	\bibitem{1} Aguiar, M.; Loday, J.-L. {\it Quadri-algebras}. Journal of Pure and Applied Algebra, {\bf191}(2004), 205-211.
	\vspace{.1cm}
	\bibitem{13}Andrews, G.; Guo, L.; Keigher, W.; Ono, K.  {\it Baxter algebras and Hopf algebras}. Transactions of the American Mathematical Society, {\bf355}(2003), no. 11, 4639-4656.\vspace{.1cm}
     \bibitem{A2}Atkinson, F.V. {\it Some aspects of Baxter's functional equation}. Journal of Mathematical Analysis and Applications, {\bf 7}(1963), no. 1, 1-30.	\vspace{.1cm}
    \bibitem{B1}Baxter, G. {\it An analytic problem whose solution follows from a simple algebraic identity}. Pacific Journal of Mathematics, {\bf10}(1960), no. 3 731-742. \vspace{.1cm}
	\bibitem{TC}Chtioui, T. {\it Cohomology and conformal derivations of BiHom-Lie conformal superalgebras}. (2020), arXiv preprint arXiv:2009.03760.\vspace{.1cm}
	\bibitem{32}Cheng, S.J. and Kac, V., 1997. {\it Conformal modules.} arXiv preprint q-alg/9706030.\vspace{.1cm}
	\bibitem{45}Dassoundo, M.L.; Silvestrov, S. {\it Nearly associative and nearly Hom-associative algebras and bialgebras.} (2021), arXiv preprint arXiv:2101.12377.\vspace{.1cm}
	\bibitem{34}D'Andrea, A., {\it Structure theory of finite conformal algebras} (Doctoral dissertation, Massachusetts Institute of Technology), (1998).\vspace{.1cm}
	\bibitem{35}Das, A. {\it Deformations of associative Rota-Baxter operators.} Journal of Algebra, {\bf560}(2020), 144-180.\vspace{.1cm}
	\bibitem{10}Dorfman, I. {\it Dirac structures and integrability of nonlinear evolution equations {\bf(Vol. 18)}}. Wiley, (1993).	\vspace{.1cm}
	\bibitem{11}Ebrahimi-Fard, K.  {\it Loday-type algebras and the Rota-Baxter relation.} Letters in Mathematical Physics, {\bf61}(2002), no. 2, 139-147.\vspace{.1cm}
	\bibitem{12}Ebrahimi-Fard, K.  {\it On the Associative Nijenhuis Relation.} The Electronic Journal of Combinatorics, (2004),  R38-R38.\vspace{.1cm}
	\bibitem{14}Ebrahimi-Fard, K. ; Guo, L.{\it Rota-Baxter algebras and dendriform algebras}. Journal of Pure and Applied Algebra, {\bf212}(2008), no. 2, 320-339.\vspace{.1cm}
	\bibitem{GMMP}Graziani, G.; Makhlouf, A.; Menini, C.; Panaite, F.{\it BiHom-associative algebras, BiHom-Lie algebras and BiHom-bialgebras.} SIGMA. Symmetry, Integrability and Geometry: Methods and Applications, {\bf 11}(2015), 086.\vspace{.1cm}
	\bibitem{4}Guo, L.; Keigher, W.{\it Baxter algebras and shuffle products1}. Advances in Mathematics, {\bf150}(2000), no. 1, 117-149.\vspace{.1cm}
	\bibitem{17}Guo, L.; Lang, H. ; Sheng, Y. {\it Integration and geometrization of Rota-Baxter Lie algebras.} Advances in Mathematics, {\bf 387}(2021), 107834.\vspace{0.1cm}
	\bibitem{39}Guo, L.{ \it An introduction to Rota-Baxter algebra }. Somerville: International Press, {\bf2}(2012), no. 9, 14.
	\bibitem{30} Guo, S.; Zhang, X. ; Wang, S. {\it Relative Hom-Hopf modules and total integrals.} Journal of Mathematical Physics, {\bf56}(2015), no. 2,  021701.\vspace{.1cm}
	\bibitem{44}Guo, S.; Dong, L.; Wang, S. {\it Representations and derivations of Hom-Lie conformal superalgebras.} (2018), arXiv preprint arXiv:1807.03638.\vspace{.1cm}
	\bibitem{SXS} Guo, S.; Zhang, X. ; Wang, S. {\it Cohomology and derivations of BiHom-Lie conformal algebras.} (2018), arXiv e-prints, pp.arXiv-1808.\vspace{.1cm}
	\bibitem{QLJ}Guo, Q.; Liu, L.; Lü, J.; Shen, B. ; Wang, X. {\it Rota-Baxter Systems for BiHom-Type Algebras.} Mathematics, {\bf10}(2022), no. 18, 3222.\vspace{.1cm}
	\bibitem{40}Hong, Y. {\it Extending structures for associative conformal algebras.} Linear and Multilinear Algebra, {\bf67}(2019), no. 1, 196-212.\vspace{.1cm}
	\bibitem{38}Hong, Y.; Bai, C. {\it On antisymmetric infinitesimal conformal bialgebras.} Journal of Algebra, {\bf586}(2021), 325-356.\vspace{.1cm}
    \bibitem{16}Jian, R. {\it Construction of Rota-Baxter algebras via Hopf module algebras.} Science China Mathematics, {\bf11}(2014), no. 57, 2321-2328.\vspace{.1cm}
    \bibitem{7}Kung, J.P. {\it Gian-Carlo Rota on combinatorics: Introductory papers and commentaries.} Boston: Birkhauser, (1995).\vspace{.1cm}
	\bibitem{20}Loday, J.L. ; Ronco, M. {\it Trialgebras and families of polytopes.} Contemporary Mathematics, {\bf346}(2004), 369-398.\vspace{.1cm}	
	\bibitem{LMMP2} Liu, L.; Makhlouf, A.; Menini, C.; Panaite, F. {\it BiHom-Novikov algebras and infinitesimal BiHom-bialgebras.} Journal of Algebra, {\bf 560}(2020), 1146-1172.\vspace{.1cm} 
   \bibitem{LMMP3}Liu, L.; Makhlouf, A.; Menini, C. ; Panaite, F. {\it BiHom-pre-Lie algebras, BiHom-Leibniz algebras and Rota–Baxter operators on BiHom-Lie algebras.} Georgian Mathematical Journal, {\bf28}(2021), no. 4, 581-594.\vspace{.1cm}
   \bibitem{LMMP4} Liu, L.; Makhlouf, A.; Menini, C.; Panaite, F. {\it Tensor products and perturbations of BiHom-Novikov-Poisson algebras.} Journal of Geometry and Physics, {\bf161}(2021), 104026.
   \vspace{.1cm}
	\bibitem{L}Leroux, P. {\it Construction of Nijenhuis operators and dendriform trialgebras.} International Journal of Mathematics and Mathematical Sciences, {\bf49}(2004), 2595-2615.
	\vspace{.1cm}	
	\bibitem{LMMP}Liu, L.; Makhlouf, A.; Menini, C.; Panaite, F. {\it Rota-Baxter operators on BiHom-associative algebras and related structures.} Colloquium Mathematicum, {\bf 161}(2020),  263-294.\vspace{.1cm}
	\bibitem{18}Ma, T.; Liu, L. {\it Rota-Baxter coalgebras and Rota-Baxter bialgebras.} Linear and Multilinear Algebra, {\bf 64}(2016), no. 5, 968- 979.\vspace{.1cm}
	\bibitem{25}Makhlouf, A.; Silvestrov, S. {\it On Hom-algebra structures.} (2006), arXiv preprint math/0609501.\vspace{.1cm}
	\bibitem{27} Makhlouf, A.; Silvestrov, S. {\it Hom-algebras and Hom-coalgebras.} Journal of Algebra and its Applications, {\bf9}(2010), no. 4, 553-589. \vspace{.1cm}	

	\bibitem{U1}Uchino, K. {\it Quantum analogy of Poisson geometry, related dendriform algebras and Rota-Baxter operators.} Letters in Mathematical Physics, {\bf85}(2008), no. 2, 91-109.	\vspace{.1cm}
	\bibitem {U2} Uchino, K. {\it Dendriform structures and twisted Baxter operators.} arXiv preprint math/0701320. \vspace{.1cm}
    \bibitem{41}Yuan, L.{ \it Hom Gel'fand-Dorfman bialgebras and Hom-Lie conformal algebras.} Journal of Mathematical Physics, {\bf55}(2014), no. 4, 043507.	\vspace{.1cm}
    \bibitem{2} Yuan, L. {\it$\mathcal{O}$-operators and Nijenhuis operators on Lie conformal algebra.}  Journal of Algebra, (2022).	\vspace{.1cm}
    \end{thebibliography}
\end{document}